\providecommand{\U}[1]{\protect\rule{.1in}{.1in}}
\newtheorem{theorem}{Theorem}
\newtheorem{assumption}[theorem]{Assumption}
\newtheorem{corollary}[theorem]{Corollary}
\newtheorem{example}[theorem]{Example}
\newtheorem{lemma}[theorem]{Lemma}
\newtheorem{problem}[theorem]{Problem}
\newtheorem{proposition}[theorem]{Proposition}
\newtheorem{remark}[theorem]{Remark}
\newenvironment{proof}[1][Proof]{\noindent\textbf{#1.} }{\ \rule{0.5em}{0.5em}}
\begin{document}

\title{Optimal simulation schemes for L\'{e}vy driven stochastic differential equations}
\author{Arturo Kohatsu-Higa \thanks{This author acknowledges financial support from
the Japanese goverment} \thanks{Ritsumeikan University. Department of
Mathematical Sciences and Japan Science and Technology Agency. Email:
\texttt{arturokohatsu@gmail.com}}
\and Salvador Ortiz-Latorre \thanks{ Imperial College, Department of Mathematics,
Email: \texttt{s.ortiz-latorre@imperial.ac.uk}}
\and Peter Tankov \thanks{LPMA, Paris-Diderot University. Email:
\texttt{tankov@math.jussieu.fr}} }
\date{}
\maketitle

\begin{abstract}
We consider a general class of high order weak approximation schemes for
stochastic differential equations driven by L\'{e}vy processes with infinite
activity. These schemes combine a compound Poisson approximation for the jump
part of the L\'{e}vy process with a high order scheme for the Brownian driven
component, applied between the jump times. The overall approximation is
analyzed using a stochastic splitting argument. The resulting error bound
involves separate contributions of the compound Poisson approximation and of
the discretization scheme for the Brownian part, and allows, on one hand, to
balance the two contributions in order to minimize the computational time, and
on the other hand, to study the optimal design of the approximating compound
Poisson process. For driving processes whose L\'{e}vy measure explodes near
zero in a regularly varying way, this procedure allows to construct
discretization schemes with arbitrary order of convergence.

\noindent\textbf{Key words:} L\'{e}vy-driven stochastic differential
equations, high order discretization schemes, weak approximation, regular variation

\noindent\textbf{2010 Mathematics Subject Classification:} 65C30, 60G51

\end{abstract}
%\tableofcontents

\section{Introduction}

Let $X_{t}$ be the unique solution of the SDE
\begin{equation}
X_{t}=x+\int_{0}^{t}b\left(  X_{s}\right)  ds+\int_{0}^{t}\sigma\left(
X_{s}\right)  dB_{s}+\int_{0}^{t}h\left(  X_{s-}\right)  dZ_{s}%
,\label{Equ_X_intro}%
\end{equation}
where $b,\sigma$ and $h$ are $C^{1}$ functions with bounded derivatives, $B$
is a (multi-dimensional) Brownian motion and $Z$ a one-dimensional infinite
activity pure jump L\'{e}vy process with L\'{e}vy measure $\nu$.
%of the following form%
%\[
%Z_{t}=\int_{0}^{t}\int_{|y|\leq1}y\widetilde{N}\left(  dy,ds\right)  +\int
%_{0}^{t}\int_{|y|>1}yN\left(  dy,ds\right)  ,
%\]
%where $N\left(  dy,ds\right)  $ is a Poison random measure on $\mathbb{R}%
%\times\lbrack0,\infty)$ with intensity $\nu$ satisfying $\nu(\mathbb{R})=\infty$, $\int_{\mathbb{R}%
%}1\wedge y^{2}\nu\left(  dy\right)  <\infty$ and $\widetilde{N}\left(
%dy,ds\right)  =N\left(  dy,ds\right)  -\nu\left(  dy\right)  ds$ denotes the
%compensated version of $N.$
In this paper we are interested in the weak approximation of $X_{t},$ using
random partitions of the time interval.

The traditional approach, analysed, e.g.,~in Jacod et al \cite{JKPM05} and
Protter-Talay \cite{PT97}, consists in approximating $X$ using the Euler
scheme with a uniformly spaced time grid. It suffers from two difficulties:
first, for a general L\'{e}vy measure $\nu,$ there is no available algorithm
to simulate the increments of the driving L\'{e}vy process and second, a large
jump of $Z$ occurring between two discretization points can lead to a large
discretization error.

With the aim of resolving these problems, Rubenthaler \cite{Ru03} (see also
Bruti-Liberati and Platen \cite{B-LP07} and Mordecki et al \cite{MSTZ08} in
the context of finite intensity L\'evy processes) introduced the idea of
replacing the driving process $Z$ by a suitable compound Poisson approximation
and placing the discretization points at the jump times of the compound
Poisson process. This approach is problematic when the jump activity of the
driving L\'{e}vy process $Z$ is strong, that is, the L\'evy measure has a
strong singularity at zero.

In Kohatsu-Tankov \cite{KH-T09}, the authors introduce and analyze a new
approximation scheme in the case $\sigma\equiv0$, building on the ideas of
Rubenthaler and Asmussen-Rosinski \cite{AR01}. The idea is to replace the
driving process $Z$ by an approximating process $Z^{\varepsilon}$, which
incorporates all jumps of $Z$ bigger than $\varepsilon$ and approximates the
jumps of $Z$ smaller than $\varepsilon$ with a suitable chosen Brownian
motion, matching the second moment of $Z$. The solution to the contiunuous SDE
between the jump times can then be approximated with a suitable high
order scheme. More recently, a similar approximation was used in the
context of multilevel Monte Carlo schemes for L\'evy-driven SDEs
\cite{dereich.11}. 

%This approximation forces the
%construction of a high order approximation to the solution of the continuous SDE between the
%times of jumps of $Z^\varepsilon$. This second step is necessary in order to match the large computational effort in approximating the jumps of $Z$.

Although the previous approach improves the rates of convergence obtained with
Rubenthaler's scheme, there are limits on how well the small jumps of a
L\'{e}vy process can be approximated by a Brownian motion (think of
non-symmetric L\'{e}vy processes). In Tankov \cite{Ta10}, the author presented
a new scheme in the case $\sigma\equiv0$ based on approximating $Z $ by a
finite intensity L\'{e}vy process, which incorporates all jumps bigger than
$\varepsilon$ and matches a given number of moments of $Z$ with an additional
compound Poisson term. The main advantages of this approach are that the
schemes are very easy to implement, because the driving process is piecewise
deterministic, and that one can, in specific cases, obtain arbitrarily high
order of convergence by matching a sufficiently large number of moments of
$Z.$

In this paper we are interested in two aspects of approximation schemes for
L\'evy driven SDE's. First, in many of the previously mentioned schemes one
assumes that there is no Brownian motion component in the equation
(\ref{Equ_X_intro}) (i.e.~$\sigma\equiv0$). The reason for this was that the
speed of convergence of the approximating scheme for the jump component is
fast and therefore it was not clear how to match this speed with the
approximation of the Brownian component without wasting computing resources.
Furthermore the fact that the equation does not have a Brownian component
facilitates the error analysis and the implementation of the scheme because
the SDE between jumps is deterministic, as in \cite{Ta10}, or can be treated
as a deterministic equation perturbed by a small noise term as in
\cite{KH-T09}. On the other hand, recent developments in the area of weak
approximations for continuous SDE's \cite{OTV,NV} allow for high order
approximations of the Brownian component. Therefore one may expect that the
right combination of these approximation techniques with suitable jump adapted
approximation schemes for pure jump SDE's can be achieved.

Our second goal is a systematic study of the new moment-matching approximation
schemes introduced in \cite{Ta10}, with the objective of designing optimal
compound Poisson approximations and studying their convergence in a more
general setting.

In this article, we show that the mathematical framework developed in
Tanaka-Kohatsu \cite{TaK-H09} is the appropriate tool in order to deal with
the general situation ($\sigma\neq0$). However, it needs to be adapted to the
present setting where the partition is random while in \cite{TaK-H09}, the
partition is fixed. This framework is based on semigroup decompositions, which
allow the study of a complex generator by decomposing it into simple
components. The error estimate is obtained by a local analysis of each component.

In the resulting error bound, the contributions of the compound Poisson
approximation and of the discretization scheme for the Brownian part are
separate and tractable. This allows to balance the two contributions by an
appropriate choice of the order of the discretization scheme for the Brownian
part, in order to minimize the computational time. On the other hand, this
decomposition enables us to formulate the problem of choosing the compound
Poisson approximation as an optimization problem (minimizing the error bound).
We characterize the optimal approximating process in the general case and
provide explicit representation in specific situations. Often, the optimal
solution is to keep all the jumps bigger than $\varepsilon$ and add an
additional compound Poisson process to match the moment structure of the small
jumps. Under a regularity assumption on the L\'evy measure, we show that this
methodology can be used to construct approximations with arbitrarily high
order of convergence.

An interesting consequence of our analysis is that the Asmussen-Rosinski
approach is not the optimal procedure to approximate the small jumps in the
setting of weak convergence. We give a better procedure, which uses L\'evy
measures with point masses to approximate the small jumps (see Remark
\ref{asro}) .

In order to correctly describe the optimality aspect, let $\bar{X}_{t}$ be the
unique solution of $\left(  \ref{Equ_X_intro}\right)  $ but using $\bar{Z}$ as
driving process instead of $Z$. $\bar{Z}$ is a finite activity L\'{e}vy
process with L\'evy measure $\bar{\nu}$, which may have a Wiener component.
Furthermore, let $\widehat{X}_{t}$ be a computable approximation of $\bar
{X}_{t}$ which shares the same jump times as $\bar{X}.$ The first objective is
to find an upper bound for the difference $\mathcal{D}_{1}=\mathbb{E}[f\left(
X_{1}\right)  ]-\mathbb{E}[f(\bar{X}_{1})]$ in terms of $\bar{\lambda}%
=\bar{\nu}\left(  \mathbb{R}\right)  <\infty$ (the average number of partition
intervals) and the moments of $\nu-\bar{\nu}$ and $|\nu-\bar{\nu}|. $ This
part assumes then that the Brownian component can be simulated exactly.

In the second part, we approximate the Brownian component and analyze the
error $\widehat{\mathcal{D}}_{1}=\mathbb{E}[f\left(  \bar{X}_{1}\right)
]-\mathbb{E}[f(\widehat{X}_{1})]$. To analyze $\widehat{\mathcal{D}}_{1},$ we
extend the operator approach developed in \cite{TaK-H09} to jump-adapted
random partitions.

In conclusion, we find that we can express an upper bound for $\mathcal{D}%
_{1}$ in terms of the moments of $\nu-\bar{\nu}$ and $|\nu-\bar{\nu}|$ and an
upper bound for $\widehat{\mathcal{D}}_{1}$ in terms of $\bar{\lambda} .$ Now,
for fixed $\bar{\lambda} $ (and, hence, $\widehat{\mathcal{D}}_{1}$ ) we
consider $\bar{\nu}$ as a variable and minimize the upper bound for
$\mathcal{D}_{1},$ obtaining an optimal L\'{e}vy measure $\bar{\nu}$ for the
approximating finite intensity process $\bar{Z}$. Once the optimal error is
known as a function of $\bar{\lambda}$ (this is done as a worse case analysis
or in asymptotic form) one can identify the order of the approximation that is
needed for the Brownian component.

%We therefore prove the optimality of the method introduced in
%\cite{Ta10} in the sense of weak error described above.

The paper is structured as follows. In Section \ref{sec:prelim}, we introduce
the notation. In Section \ref{sec:weakerror}, we start introducing the
assumptions in order to study the weak error of the approximations and we give
the main error estimate, which will be the base for the study of optimal
approximations. The expansion of the error is given in terms of $\bar{\lambda}
$ and the moments of $\nu-\bar{\nu}.$

The proof of the main error estimate is given in Sections \ref{sec:D} and
\ref{sec:hatD}, which analyze, respectively, $\mathcal{D}_{1}$ and
$\widehat{\mathcal{D}}_{1}$. In Section \ref{sec:optimal}, we formulate the
problem of finding the optimal compound Poisson approximation of $Z$ as an
optimization problem, characterize its solution and prove an existence result.
Explicit examples of solutions are given in Section \ref{sec:explicit}, and
Section \ref{sec:6.2} analyzes the convergence rates of the resulting
scheme. Specific algorithms and numerical illustrations are provided
in Section \ref{numerics.sec}.
Finally, in the appendix we gather some technical lemmas.

%That is, find the smallest value for $\int_{\mathbb{R}}|y|^n|\nu-\bar{\nu}|(dy),$ where $\bar\nu$ is finite intensity L\'{e}vy measure satisfying
%that the moments up to order $n-1$ are equal to those of $\nu$. In Section \ref{sec:explicit}, we apply the results in Section \ref{sec:optimal} to
%obtain an optimal finite activity L\'{e}vy measure for our scheme for small values of $n$. We also
%study the worst-case convergence rate.

Throughout the article we use the Einstein notation of summation over double
indices. $\delta_{y}$ denotes the point mass measure at $y\in\mathbb{R}$.
Various positive constants are denoted by $C$ or $K$ with the dependence on
various parameters. Their exact values may change from one line to the next
without further mentioning.

\section{Preliminaries and notation}

\label{sec:prelim} Let the process $X=\{X_{t}\}_{t\in\lbrack0,1]}$ be the
unique solution of the following $d$-dimensional SDE%

\begin{equation}
X_{t}=x+\int_{0}^{t}b\left(  X_{s}\right)  ds+\int_{0}^{t}\sigma\left(
X_{s}\right)  dB_{s}+\int_{0}^{t}h\left(  X_{s-}\right)  dZ_{s},\label{Equ_X}%
\end{equation}
where $b:\mathbb{R}^{d}\rightarrow\mathbb{R}^{d},h:\mathbb{R}^{d}%
\rightarrow\mathbb{R}^{d}$ and $\sigma:\mathbb{R}^{d}\rightarrow
\mathbb{R}^{d\times k}$ are $C^{1}\left(  \mathbb{R}^{d}\right)  $ functions
with bounded derivatives, $B=\{B_{t}\}_{t\in\lbrack0,1]}$ is a $k$-dimensional
standard Brownian motion and $Z=\{Z_{t}\}_{t\in\lbrack0,1]}$ is a one
dimensional L\'{e}vy process (independent of $B$) with the following
representation%
\begin{align*}
Z_{t}  &  =\int_{0}^{t}\int_{|y|\leq1}y\widetilde{N}\left(  dy,ds\right)
+\int_{0}^{t}\int_{|y|>1}yN\left(  dy,ds\right)  ,\\
\widetilde{N}\left(  dy,ds\right)   &  =N\left(  dy,ds\right)  -\nu\left(
dy\right)  ds,
\end{align*}
where $\nu$ is an infinite activity L\'{e}vy measure, that is $\nu\left(
\mathbb{R}\right)  =+\infty,$ and $N$ is a Poisson random measure on
$\mathbb{R}\times\lbrack0,\infty)$ with intensity $\nu\left(  dy\right)
\times dt$.

Let $\bar{X}=\{\bar{X}_{t}\}_{t\in\lbrack0,1]}$ be the approximating
process, which is the solution of the SDE%
\begin{equation}
\bar{X}_{t}=x+\int_{0}^{t}b(\bar{X}_{s})ds+\int_{0}^{t}\sigma(\bar{X}%
_{s})dB_{s}+\int_{0}^{t}h(\bar{X}_{s-})d\bar{Z}_{s},\label{Equ_X_epsilon}%
\end{equation}
where $\bar{Z}=\{\bar{Z}_{t}\}_{t\in\lbrack0,1]}$ is a L\'{e}vy
process (independent of $B$) with the following representation%
\begin{align*}
\bar{Z}_{t}  &  =\bar\mu t+\bar\sigma W_{t}+\int_{0}^{t}\int_{|y|\leq
1}y\widetilde{\bar N}\left(  dy,ds\right)  +\int_{0}^{t}\int_{\left\vert
y\right\vert >1}y \bar N\left(  dy,ds\right)  ,\\
\widetilde{\bar N}\left(  dy,ds\right)   &  =\bar N\left(  dy,ds\right)
-\bar\nu\left(  dy\right)  ds,
\end{align*}
where $\bar\lambda=\int_{\mathbb{R}}\bar\nu\left(  dy\right)  <\infty$,
$\bar\sigma^{2}\geq0$ and $\bar N $ is a Poisson random measure on
$\mathbb{R}\times\lbrack0,\infty)$ with intensity $\bar\nu\left(  dy\right)
\times ds$ and $W=\{W_{t}\}_{t\in\lbrack0,1]}$ is a standard $k$-dimensional
Brownian motion independent of all the other processes. We assume that
$(\bar\mu, \bar\nu, \bar\sigma)$ belongs to a set of possible approximation
parameters denoted by $\mathcal{A}$. Without loss of generality we may
sometimes abuse the notation and write $\bar\nu\in\mathcal{A}$ to denote the
L\'evy measure for which there exists $\bar\mu$ and $\bar\sigma$ so that
$(\bar\mu, \bar\nu, \bar\sigma)\in\mathcal{A}$.

Note that, if we define
\[
\bar b\left(  x\right)  =b\left(  x\right)  +h\left(  x\right)  (\bar\mu
-\int_{\left\vert y\right\vert \leq1}y\bar\nu\left(  dy\right)  ),
\]
then we can write%
\begin{align*}
\bar X_{t}  &  =x+\int_{0}^{t}\bar b\left(  \bar X_{s}\right)  ds+\int_{0}%
^{t}\sigma\left(  \bar X_{s}\right)  dB_{s}+\bar\sigma\int_{0}^{t}h\left(
\bar X_{s}\right)  dW_{s}\\
&  +\int_{0}^{t}\int_{\mathbb{R}}h(\bar X_{s-})y\bar N\left(  dy,ds\right)  .
\end{align*}

Sometimes, the following flow notation will be useful%
\begin{align*}
\bar X_{t}\left(  s,x\right)   &  =x+\int_{s}^{t}\bar b\left(  \bar
X_{u}\left(  s,x\right)  \right)  du+\int_{s}^{t}\sigma\left(  \bar
X_{u}\left(  s,x\right)  \right)  dB_{u}\\
&  +\bar\sigma\int_{s}^{t}h(\bar X_{u}\left(  s,x\right)  )dW_{u}+\int_{s}%
^{t}\int_{\mathbb{R}}h(\bar X_{u-}\left(  s,x\right)  )\bar N\left(
dy,ds\right)  .
\end{align*}

Define the process%
\begin{equation}
\bar Y_{s}(t,x)=x+\int_{t}^{s}\bar b(\bar Y_{u}(t,x))du+\int_{t}^{s}%
\sigma(\bar Y_{u}(t,x))dB_{u}+\bar\sigma\int_{t}^{s}h(\bar Y_{u}%
(t,x))dW_{u}\label{Equ_Y}%
\end{equation}
and the following operator%
\[
(\bar P_{t}f)\left(  x\right)  =\mathbb{E}[f(\bar Y_{t}(0,x))].
\]

We consider the following stopping times%
\begin{align*}
\bar T_{i}  &  =\inf\{t>\bar T_{i-1}:\bar N\left(  \mathbb{R},(\bar
T_{i-1},t]\right)  \neq0\},\quad i\in\mathbb{N},\\
\bar T_{0}  &  =0.
\end{align*}
and the associated \textit{jump} operators%
\begin{align*}
(\bar S^{i}f)\left(  x\right)   &  =\mathbb{E}[f(x+h\left(  x\right)
\Delta\bar Z_{\bar T_{i}})],\quad i\in\mathbb{N}\\
(\bar S^{0}f)\left(  x\right)   &  =f\left(  x\right)  .
\end{align*}

Note that the stopping times $\bar T_{i}$ are well defined because
$\bar\lambda<\infty$ and that $\bar S^{i}$ is independent of $i$ because the
jump sizes of a compound Poisson process are identically distributed. Still,
we will keep this notation as it will help to keep track of the number of jumps.

We will also assume that there exist a process $\widehat{X}=\{\widehat{X}%
_{t}\}_{t\in\lbrack0,1]}$ satisfying the following stochastic representation condition.

\begin{assumption}
[$\mathcal{SR}$]Assume that $\widehat{X}$ satisfies%
\begin{align*}
\mathbb{E}[\boldsymbol{1}_{\{1<\bar T_{1}\}}f(\widehat{X}_{1})]  &
=\mathbb{E}[\boldsymbol{1}_{\{1<\bar T_{1} \}}\bar S^{0}\widehat{P}%
_{1}f\left(  x\right)  ],\\
\mathbb{E}[\boldsymbol{1}_{\{\bar T_{i}<1<\bar T_{i+1}\}}f(\widehat{X}_{1})]
&  =\mathbb{E}[\boldsymbol{1}_{\{\bar T_{i}<1<\bar T_{i+1}\}}\bar
S^{0}\widehat{P}_{\bar T_{1}\wedge1}\bar S^{1}\widehat{P}_{\bar T_{2}-\bar
T_{1}}\cdots\bar S^{i}\widehat{P}_{1-\bar T_{i}}f\left(  x\right)  ],
\end{align*}
for $i\in\mathbb{N},$ where $\widehat{P}_{t}$ is a linear operator.
\end{assumption}

\begin{remark}
The process $\widehat{X}$ and the linear operator $\widehat{P}_{t}$ correspond
to the scheme chosen to approximate the solution of equation $\left(
\ref{Equ_X_epsilon}\right)  $ between jumps.
\end{remark}

Recall that for each
multi-index of order $m$, $\alpha=(\alpha_{1},...,\alpha_{d})\in\mathbb{Z}%
_{+}^{d}$ we define $|\alpha|:=\alpha_{1}+\cdots+\alpha_{d}=m$. We also use
the following notation $f^{\alpha}=\prod_{i=1}^{d} (f^{i})^{\alpha_{i}}$ for
any function $f:\mathbb{R}^{k}\rightarrow\mathbb{R}^{d}$. We introduce the following spaces of functions. 

\begin{itemize}
\item $C_{p}^{m}:$ the set of $C^{m}$ functions $f:\mathbb{R}^{d}%
\rightarrow\mathbb{R}$ such that for each multi-index $\alpha$ with
$0\leq|\alpha|\leq m,$
\[
\left\vert \frac{\partial^{\alpha}}{\partial x^{\alpha}}f\left(  x\right)
\right\vert \leq C\left(  \alpha\right)  \left(  1+\left\Vert x\right\Vert
^{p}\right)
\]
for some positive constant $C\left(  \alpha\right)  .$
\end{itemize}

We will use the notation $C_{p}:=C_{p}^{0}.$ In each $C_{p}^{m}$ we consider
the norm%
\[
\left\Vert f\right\Vert _{C_{p}^{m}}=\inf\{C>0:\left\vert \frac{\partial
^{\alpha}}{\partial x^{\alpha}}f\left(  x\right)  \right\vert \leq C\left(
1+\left\Vert x\right\Vert ^{p}\right)  ,0\leq|\alpha|\leq m,x\in\mathbb{R}\}.
\]

\begin{itemize}
%\item $C_{p,0}^{m}:$ the set of $C^{m}$ functions $f:\mathbb{R}^{d}%
%\rightarrow\mathbb{R}$ such that for each multi-index $\alpha$ with
%$0\leq|\alpha|\leq m,$
%\[
%\left\vert \frac{\partial^{\alpha}}{\partial x^{\alpha}}f\left(  x\right)
%\right\vert \leq C\left(  \alpha\right)  \left(  1+\left\Vert x\right\Vert
%^{p}\right)
%\]
%for some positive constant $C\left(  \alpha\right)  .$

\item $C_{b}^{m}:$ the set of $C^{m}$ functions $f:\mathbb{R}^{d}%
\rightarrow\mathbb{R}$ such that for each multi-index $\alpha$ with
$1\leq|\alpha|\leq m,$
\[
\left\vert \frac{\partial^{\alpha}}{\partial x^{\alpha}}f\left(  x\right)
\right\vert \leq C\left(  \alpha\right)
\]
for some positive constant $C\left(  \alpha\right)  .$
\end{itemize}

\begin{assumption}
[$\mathcal{H}_{n}$]$h,b,\sigma\in C_{b}^{n},$ $%
%TCIMACRO{\tint }%
%BeginExpansion
{\textstyle\int}
%EndExpansion
\left\vert y\right\vert ^{2n}\nu\left(  dy\right)  <\infty$ and $\sup_{\bar
\nu\in\mathcal{A}}%
%TCIMACRO{\tint }%
%BeginExpansion
{\textstyle\int}
%EndExpansion
\left\vert y\right\vert ^{2n}\bar\nu\left(  dy\right)  <\infty.$
\end{assumption}

\begin{assumption}
[$\mathcal{H}_{n}^{\prime}$]$h,b,\sigma\in C_{b}^{n},%
%TCIMACRO{\tint }%
%BeginExpansion
{\textstyle\int}
%EndExpansion
\left\vert y\right\vert ^{k}\nu\left(  dy\right)  <\infty$ and $\sup_{\bar
\nu\in\mathcal{A}}%
%TCIMACRO{\tint }%
%BeginExpansion
{\textstyle\int}
%EndExpansion
\left\vert y\right\vert ^{k}\bar\nu\left(  dy\right)  <\infty$ for all
$k\geq1.$
\end{assumption}

In fact, all the results up to Section \ref{sec:hatD} only use moments up to
power $n+1$ when we assume $(\mathcal{H}_{n})$. Still, in applications, in
order for the continuous high-order scheme to satisfy the assumption
$(\mathcal{R}(m,\delta_{m}))$ (see below), the moments of order at least $2n$
are required. For this reason, we prefer this version of the assumptions.

\section{Weak error estimate}

\label{sec:weakerror}

Our next objective is to establish the main error estimate of this paper. In
order to do this, we need to introduce a modification of the framework
introduced in \cite{TaK-H09} in the next section. The error estimate will then
be given in Section \ref{sec:mainerr}.

\subsection{\label{SectionApproxOperators} Framework for weak approximation of
operator compositions}

To simplify the notation, we define the non commutative product of operators
as follows. Given a finite number of linear operators $A^{1},....,A^{n},$ we
define%
\[
\prod_{i=1}^{n}A^{i}:=A^{1}A^{2}\cdots A^{n}.
\]
Suppose we are given two sequences of linear operators $\{\bar P_{t}%
^{i}\}_{i\geq1}$ and $\{ Q_{t}^{i}\}_{i\geq1}$, $t\in[0,1]$. Furthermore,
assume that for each $i\in\mathbb{N},\ Q_{t}^{i}$ approximates $\bar P_{t}%
^{i}$ in some sense to be defined later (see Assumption \ref{ass:6}). Given a
partition $\pi=\{0=t_{0}<t_{1}<\cdots<t_{n-1}<t_{n}=1\},$ we define its norm
as $|\pi|_{n}:=\sup_{i=1,...,n}(t_{i}-t_{i-1})$. Now, we would like to
estimate the following quantity%
\[
\bar P_{t_{1}}^{1}\bar P_{t_{2}-t_{1}}^{2}\cdots\bar P_{1-t_{n-1}}^{n}f\left(
x\right)  - Q_{t_{1}}^{1}Q_{t_{2}-t_{1}}^{2}\cdots Q_{1-t_{n-1}}^{n}f\left(
x\right)  .
\]
In order to achieve this goal, we will make use of the following expansion%
\begin{align}
&  \prod_{i=1}^{n}\bar P_{t_{i}-t_{i-1}}^{i}f\left(  x\right)  -\prod
_{i=1}^{n}Q_{t_{i}-t_{i-1}}^{i}f\left(  x\right) \nonumber\\
&  =\sum_{k=1}^{n}\left(  \prod_{i=1}^{k-1}Q_{t_{i}-t_{i-1}}^{i}(\bar
P_{t_{k}-t_{k-1}}^{k}-Q_{t_{k}-t_{k-1}}^{k})\prod_{i=k+1}^{n}\bar
P_{t_{i}-t_{i-1}}^{i}\right)  f\left(  x\right)
.\label{Equ_Product_Operator_Expansion}%
\end{align}
Hence, if we have a good norm estimates of $\prod_{i=1}^{k-1}\bar
P_{t_{i}-t_{i-1}}^{i}$ and $\prod_{i=k+1}^{n}Q_{t_{i}-t_{i-1}}^{i}$ then we
can expect that $\prod_{i=1}^{n}Q_{t_{i}-t_{i-1}}^{i}f\left(  x\right)  $
approximates well $\prod_{i=1}^{n}\bar P_{t_{i}-t_{i-1}}^{i}f\left(  x\right)
.$

From now on, $\bar P_{t}^{i}:\cup_{p\geq0}C_{p}\rightarrow\cup_{p\geq0}%
C_{p},i\in\mathbb{N}$ is a linear operator for $t\in\lbrack0,1]$ and
$Q_{t}^{i}:\cup_{p\geq0}C_{p}\rightarrow\cup_{p\geq0}C_{p},i\in\mathbb{N}$ is
a linear operator for $t\in\lbrack0,1].$

\begin{assumption}
[$\mathcal{M}_{0}$]For all $i\in\mathbb{N},$ if $f\in C_{p}$ with $p\geq2,$
then $Q_{t}^{i}f\in C_{p}$ and%
\[
\sup_{t\in\left[  0,1\right]  }\left\Vert Q_{t}^{i}f\right\Vert _{C_{p}}\leq
K\left(  \mathcal{A}\right)  \left\Vert f\right\Vert _{C_{p}},
\]
for some constant $K\left(  \mathcal{A}\right)  >0.$ Furthermore, we assume
$0\leq Q_{t}^{i}f\left(  x\right)  \leq Q_{t}^{i}g\left(  x\right)  $ whenever
$0\leq f\leq g$ and $Q_{t}^{i}\boldsymbol{1}_{\mathbb{R}}\left(  x\right)
=\boldsymbol{1}_{\mathbb{R}}\left(  x\right)  $.
\end{assumption}

\begin{assumption}
[$\mathcal{M}$]For all $i\in\mathbb{N},$ $Q_{t}^{i}$ satisfies $\left(
\mathcal{M}_{0}\right)  $ and for each $f_{p}\left(  x\right)  :=\left\vert
x\right\vert ^{p}\left(  p\in\mathbb{N}\right)  ,$%
\[
Q_{t}^{i}f_{p}\left(  x\right)  \leq\left(  1+K\left(  \mathcal{A},p\right)
t\right)  f_{p}\left(  x\right)  +K^{\prime}\left(  \mathcal{A},p\right)  t
\]
for some positive constants $K\left(  \mathcal{A},p\right)  $ and $K^{\prime
}\left(  \mathcal{A},p\right)  .$
\end{assumption}

For $m\in\mathbb{N},\delta_{m}:\left[  0,1\right]  \rightarrow\mathbb{R}%
_{\mathbb{+}}$ denotes an increasing function satisfying
\[
\underset{t\rightarrow0+}{\lim\sup}\frac{\delta_{m}\left(  t\right)  }%
{t^{m-1}}=0.
\]
Usually, we have $\delta_{m}\left(  t\right)  =t^{m}.$

\begin{assumption}
\label{ass:6} $(\mathcal{R}\left(  m,\delta_{m}\right)  ) $ For all
$i\in\mathbb{N},$ define $\mathrm{Err}_{t}^{i}\equiv\mathrm{Err}_{t}^{\bar\nu,
i} =\bar P_{t}^{i}-Q_{t}^{i}.$ For each $p\geq2,$ there exists a constant
$q=q\left(  m,p\right)  $ such that if $f\in C_{p}^{m^{\ast}}$ with $m^{\ast
}\geq2m+2$ then%
\[
\left\Vert \mathrm{Err}_{t}^{i}f\right\Vert _{C_{q}}\leq K\left(
\mathcal{A},m\right)  t\delta_{m}\left(  t\right)  \left\Vert f\right\Vert
_{C_{p}^{m^{\ast}}},
\]
for all $t\in\left[  0,1\right]  .$
\end{assumption}

\begin{assumption}
\label{ass:7} $(\mathcal{M}_{P})$\ If $f\in C_{p}^{m}$ one has that for
$k=1,...,n-1$%
\[
\sup_{(t_{k+1},...,t_{n})\in\left[  0,1\right]  ^{n-k}}\left\Vert
\prod_{i=k+1}^{n}\bar P_{t_{i}}^{i}f\right\Vert _{C_{p}^{m}}\leq C\left(
\mathcal{A} \right)  \left\Vert f\right\Vert _{C_{p}^{m}}.
\]

\end{assumption}

\begin{lemma}
\label{L_Bounded_Product_Q}Under assumption $\left(  \mathcal{M}\right)  ,$
the operators $\{Q_{t}^{i}\}_{i\geq1}$ satisfy
\[
\sup_{\mathcal{A}}\sup_{n}\max_{1\leq k\leq n}\left(  \prod_{i=1}%
^{k-1}Q_{t_{i}-t_{i-1}}^{i}\right)  f\left(  x\right)  <\infty,
\]
for any positive function $f\in C_{p},\ p\geq0$ and $|\pi|_{n}n\le C$ for some
positive constant $C$.
\end{lemma}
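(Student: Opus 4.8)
The plan is to reduce everything to the action of the composed operators on the power functions $f_p(x)=|x|^p$ appearing in $(\mathcal{M})$, and then to iterate the moment bound along the composition, using positivity to propagate inequalities through each operator. Throughout write $s_i:=t_i-t_{i-1}$, so that $\sum_{i=1}^{n}s_i=1$.

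First I would exploit $(\mathcal{M}_0)$ to dispose of the general positive $f$. Since $f\in C_p$ is positive we have the pointwise bound $0\le f(x)\le \|f\|_{C_p}(1+f_p(x))$. The composition $\prod_{i=1}^{k-1}Q_{s_i}^i$ is itself a positive linear operator (monotonicity in $(\mathcal{M}_0)$) which fixes constants (since $Q_t^i\boldsymbol{1}_{\mathbb{R}}=\boldsymbol{1}_{\mathbb{R}}$), so applying it to this inequality gives
\[
0\le \Big(\prod_{i=1}^{k-1}Q_{s_i}^i\Big)f(x)\le \|f\|_{C_p}\Big(1+\big(\textstyle\prod_{i=1}^{k-1}Q_{s_i}^i\big)f_p(x)\Big).
\]
Hence it suffices to bound $\prod_{i=1}^{k-1}Q_{s_i}^i f_p$ uniformly in $n$, $k$ and the partition.

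Next I would iterate the estimate $Q_s^i f_p\le (1+Ks)f_p+K's$ from $(\mathcal{M})$, with $K=K(\mathcal{A},p)$ and $K'=K'(\mathcal{A},p)$, reading the composition from the innermost operator $Q_{s_{k-1}}^{k-1}$ outward. At a generic step one has an inequality of the form $\le a\,f_p+b$; applying the next operator $Q_{s_j}^j$ and using positivity together with $Q_{s_j}^j\boldsymbol{1}_{\mathbb{R}}=\boldsymbol{1}_{\mathbb{R}}$ and the moment bound turns it into $\le a(1+Ks_j)f_p+(aK's_j+b)$. A straightforward induction then yields
\[
\Big(\prod_{i=1}^{k-1}Q_{s_i}^i\Big)f_p(x)\le \Big(\prod_{i=1}^{k-1}(1+Ks_i)\Big)f_p(x)+K'\sum_{j=1}^{k-1}s_j\prod_{i=j+1}^{k-1}(1+Ks_i).
\]
Finally, since $\sum_{i=1}^{k-1}s_i=t_{k-1}\le 1$, I would bound $\prod_{i=1}^{k-1}(1+Ks_i)\le \exp\big(K\sum_i s_i\big)\le e^{K}$, and likewise the additive sum by $K'e^{K}\sum_j s_j\le K'e^{K}$. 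This gives $\prod_{i=1}^{k-1}Q_{s_i}^i f_p(x)\le e^{K}f_p(x)+K'e^{K}$, a bound depending on $x$ but independent of $n$, $k$ and the partition points, and uniform over $\mathcal{A}$ because $K,K'$ depend only on $(\mathcal{A},p)$. Combined with the first step this proves finiteness for every fixed $x$.

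The only point requiring genuine care is the propagation of inequalities through the operators: at every stage one must invoke the monotonicity and the normalization $Q_t^i\boldsymbol{1}_{\mathbb{R}}=\boldsymbol{1}_{\mathbb{R}}$ of $(\mathcal{M}_0)$, and keep the bookkeeping for the additive remainder correct in the induction. The uniformity in $n$ is not an obstacle once one observes that $\sum_i s_i\le 1$ collapses the telescoping product to $e^{K}$; in particular the hypothesis $|\pi|_n n\le C$ is not actually used for this estimate — only the fact that the increments $s_i$ sum to at most $1$.
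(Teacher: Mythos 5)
Your proof is correct, and its core strategy is the same as the paper's: reduce the general positive $f\in C_p$ to a power function via $0\le f\le \|f\|_{C_p}(1+f_p)$, then push the moment bound of $(\mathcal{M})$ through the composition one operator at a time, using monotonicity, linearity and $Q_t^i\boldsymbol{1}_{\mathbb{R}}=\boldsymbol{1}_{\mathbb{R}}$. The one genuine difference is how the accumulated multiplicative factor is controlled. The paper bounds every increment by the mesh, obtaining $(1+K|\pi|_n)^{k-1}$, and then invokes the hypothesis $|\pi|_n n\le C$ to get $(1+K|\pi|_n)^{k-1}\le e^{CK}$. You instead keep the individual increments and use $\prod_{i=1}^{k-1}(1+Ks_i)\le \exp(K\sum_i s_i)\le e^{K}$, since the increments of a partition of $[0,1]$ sum to at most $1$; your bookkeeping of the additive remainder $K'\sum_j s_j\prod_{i=j+1}^{k-1}(1+Ks_i)\le K'e^{K}$ is also tighter than the paper's. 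This buys you something real: as you observe, the hypothesis $|\pi|_n n\le C$ becomes superfluous for this lemma, so your version is slightly more general (and your explicit reduction from $f$ to $f_p$ fills in a step the paper leaves implicit). One small technical remark: assumption $(\mathcal{M})$ is stated only for integer powers $f_p(x)=|x|^p$, $p\in\mathbb{N}$, so for non-integer $p\ge 0$ you should first dominate $|x|^p\le 1+|x|^{\lceil p\rceil}$ (the paper sidesteps the same issue by working with $|x|^{2p}$, $p\in\mathbb{N}$); this is a one-line fix and does not affect the argument.
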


\begin{proof}
Let $f_{p}\left(  x\right)  =\left\vert x\right\vert ^{2p}$ for $p\in
\mathbb{N}.$ Using assumption $\left(  \mathcal{M}\right)  ,$ the monotonicity
of the operators $\{Q_{t}^{i}\}_{i\geq1}$ and that these operators are the
identity on constants, we have%
\begin{align*}
&  \prod_{i=1}^{k-1}Q_{t_{i}-t_{i-1}}^{i}f_{p}\left(  x\right) \\
&  =\left(  \prod_{i=1}^{k-2}Q_{t_{i}-t_{i-1}}^{i}\right)  (Q_{t_{k-1}%
-t_{k-2}}^{k-1}f_{p})\left(  x\right) \\
&  \leq\left(  1+K\left(  \mathcal{A},p\right)  \left(  t_{k-1}-t_{k-2}%
\right)  \right)  \left(  \prod_{i=1}^{k-2}Q_{t_{i}-t_{i-1}}^{i}\right)
f_{p}\left(  x\right)  +K^{\prime}\left(  \mathcal{A},p\right)  \left(
t_{k-1}-t_{k-2}\right) \\
&  \leq\left(  1+K\left(  \mathcal{A},p\right)  \left\vert \pi\right\vert
_{n}\right)  \left(  \prod_{i=1}^{k-2}Q_{t_{i}-t_{i-1}}^{i}\right)
f_{p}\left(  x\right)  +K^{\prime}\left(  \mathcal{A},p\right)  \left\vert
\pi\right\vert _{n},
\end{align*}
with constants $K\left(  \mathcal{A},p\right)  $ and $K^{\prime}\left(
\mathcal{A},p\right)  $ that do not depend on $\pi,x,k,n.$ Since $\left(
1+K\left(  \mathcal{A},p\right)  \left\vert \pi\right\vert _{n}\right)
^{k-1}\leq e^{CK\left(  \mathcal{A},p\right)  },$ by induction follows that%
\[
\sup_{\mathcal{A}}\sup_{n}\max_{1\leq k\leq n}\left(  \prod_{i=1}%
^{k-1}Q_{t_{i}-t_{i-1}}^{i}\right)  f\left(  x\right)  \leq K^{\prime}\left(
\mathcal{A},p\right)  e^{K\left(  \mathcal{A},p\right)  }\left(  1+\left\vert
x\right\vert ^{2p}\right)  <\infty.
\]

\end{proof}

\begin{theorem}
\label{Th_Approx_Operators}Assume $\left(  \mathcal{M}\right)  $ for $\bar
P_{t}^{i}$ and $Q_{t}^{i}$ and $\left(  \mathcal{R}\left(  m,\delta
_{m}\right)  \right)  .$ Then for any $f\in C_{p}^{2\left(  m+1\right)  },$
there exists a constant $K=K\left(  x,\mathcal{A},p\right)  >0$ such that%
\[
\left\vert \prod_{i=1}^{n}\bar P_{t_{i}-t_{i-1}}^{i}f\left(  x\right)
-\prod_{i=1}^{n}Q_{t_{i}-t_{i-1}}^{i}f\left(  x\right)  \right\vert \leq
K\left\Vert f\right\Vert _{C_{p}^{2(m+1)}}\sum_{k=1}^{n}\left(  t_{k}%
-t_{k-1}\right)  \delta_{m}\left(  t_{k}-t_{k-1}\right)  .
\]

\end{theorem}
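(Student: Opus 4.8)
The plan is to start from the telescoping identity \eqref{Equ_Product_Operator_Expansion}, which rewrites the difference of the two operator products as a sum over $k$ of terms in which the first $k-1$ factors are the approximating operators $Q$, the $k$-th factor is the local error $\mathrm{Err}_{t_k-t_{k-1}}^k=\bar P_{t_k-t_{k-1}}^k-Q_{t_k-t_{k-1}}^k$, and the remaining factors are the exact operators $\bar P$. Applying the triangle inequality, it suffices to bound each of the $n$ summands separately. The natural strategy is to read each summand from right to left: first let the tail $\prod_{i=k+1}^n \bar P_{t_i-t_{i-1}}^i$ act on $f$, then apply the local error operator, and finally control the action of the head $\prod_{i=1}^{k-1}Q_{t_i-t_{i-1}}^i$.

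Concretely, I would proceed in three steps. First, by Assumption $(\mathcal{M}_P)$ (Assumption \ref{ass:7}), the tail product of exact operators maps $C_p^{m^\ast}$ into itself with a norm bounded by $C(\mathcal{A})\|f\|_{C_p^{m^\ast}}$, uniformly in the number and choice of time steps; so $g_k:=\prod_{i=k+1}^n \bar P_{t_i-t_{i-1}}^i f$ lies in $C_p^{m^\ast}$ with $\|g_k\|_{C_p^{m^\ast}}\le C(\mathcal{A})\|f\|_{C_p^{m^\ast}}$. Here one must check that $f\in C_p^{2(m+1)}$ provides enough regularity, i.e. $2(m+1)\ge m^\ast\ge 2m+2$, which is exactly the threshold in $(\mathcal{R}(m,\delta_m))$. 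Second, Assumption $(\mathcal{R}(m,\delta_m))$ (Assumption \ref{ass:6}) gives the key local estimate
\[
\left\Vert \mathrm{Err}_{t_k-t_{k-1}}^k g_k\right\Vert_{C_q}\le K(\mathcal{A},m)\,(t_k-t_{k-1})\,\delta_m(t_k-t_{k-1})\,\|g_k\|_{C_p^{m^\ast}},
\]
so that $\mathrm{Err}_{t_k-t_{k-1}}^k g_k$ is a $C_q$ function whose $C_q$-norm carries the crucial factor $(t_k-t_{k-1})\,\delta_m(t_k-t_{k-1})$. Third, I would control the head product $\prod_{i=1}^{k-1}Q_{t_i-t_{i-1}}^i$ acting on this $C_q$ function. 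Since Assumption $(\mathcal{M})$ holds for the $Q^i$, Lemma \ref{L_Bounded_Product_Q} applies: for a positive $C_q$ function the composed operators are bounded, uniformly in $\mathcal{A}$, $n$, $k$ and the partition, by $K'(\mathcal{A},q)e^{K(\mathcal{A},q)}(1+|x|^{q})$, provided $|\pi|_n n\le C$. To use the lemma, whose statement is for positive functions, I would dominate $|\mathrm{Err}_{t_k-t_{k-1}}^k g_k|$ by its $C_q$-norm times $(1+|x|^q)$ and invoke the monotonicity in $(\mathcal{M}_0)$ to pull the head product through.

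Combining the three steps, each summand is bounded by a constant $K=K(x,\mathcal{A},p)$ times $\|f\|_{C_p^{2(m+1)}}\,(t_k-t_{k-1})\,\delta_m(t_k-t_{k-1})$, and summing over $k=1,\dots,n$ yields exactly the claimed bound. The constant is finite and independent of the partition and $n$ because the head bound from Lemma \ref{L_Bounded_Product_Q} and the tail bound from $(\mathcal{M}_P)$ are both uniform, and because the local-error constant $K(\mathcal{A},m)$ in $(\mathcal{R}(m,\delta_m))$ does not depend on $k$ or $n$.

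The main obstacle I anticipate is the bookkeeping of function spaces and the regularity budget across the three steps: one must verify that the smoothness and polynomial-growth indices are compatible at each junction, namely that $f\in C_p^{2(m+1)}$ feeds a function of sufficient order $m^\ast$ into the local error estimate, that the output lives in the right growth class $C_q$, and that Lemma \ref{L_Bounded_Product_Q} is then applicable to this $C_q$ function. A secondary technical point is the passage from the absolute value of a signed function to the monotone bound needed to apply the head operators, which requires invoking positivity-preservation and the identity-on-constants property in $(\mathcal{M}_0)$; this is routine but must be done carefully to keep all constants uniform in $n$ and the partition.
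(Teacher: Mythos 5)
Your proposal is correct and follows essentially the same route as the paper's own proof: the telescoping expansion \eqref{Equ_Product_Operator_Expansion}, then for each summand the tail bound via $(\mathcal{M}_P)$, the local error estimate via $(\mathcal{R}(m,\delta_m))$, and the head product controlled by Lemma \ref{L_Bounded_Product_Q} applied to $(1+|x|^{q})$ using monotonicity, before summing over $k$. The regularity bookkeeping and the positivity/monotonicity passage you flag as potential obstacles are handled exactly as you describe in the paper's argument.
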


\begin{proof}
Let $f\in C_{p}^{2\left(  m+1\right)  }.$ Using the expansion $\left(
\ref{Equ_Product_Operator_Expansion}\right)  $, we have%
\begin{align*}
&  \prod_{i=1}^{n}\bar P_{t_{i}-t_{i-1}}^{i}f\left(  x\right)  -\prod
_{i=1}^{n}Q_{t_{i}-t_{i-1}}^{i}f\left(  x\right) \\
&  =\sum_{k=1}^{n}\left(  \prod_{i=1}^{k-1}Q_{t_{i}-t_{i-1}}^{i}(\bar
P_{t_{k}-t_{k-1}}^{k}-Q_{t_{k}-t_{k-1}}^{k})\prod_{i=k+1}^{n}\bar
P_{t_{i}-t_{i-1}}^{i}\right)  f\left(  x\right)  .
\end{align*}
Using assumption $\left(  \mathcal{R}\left(  m,\delta_{m}\right)  \right)  $
and $\left(  \mathcal{M}_{P}\right)  ,$ we obtain
\begin{align*}
&  \left\vert \left(  (\bar P_{t_{k}-t_{k-1}}^{k}-Q_{t_{k}-t_{k-1}}^{k}%
)\prod_{i=k+1}^{n}\bar P_{t_{i}-t_{i-1}}^{i}\right)  f\left(  x\right)
\right\vert \\
&  \leq K\left(  \mathcal{A},m\right)  \left(  t_{k}-t_{k-1}\right)
\delta_{m}\left(  t_{k}-t_{k-1}\right)  \left(  1+\left\vert x\right\vert
^{q}\right)  \left\Vert \prod_{i=k+1}^{n}\bar P_{t_{i}-t_{i-1}}^{i}%
f\right\Vert _{C_{p}^{2(m+1)}}\\
&  \leq K\left(  \mathcal{A},m\right)  \left(  t_{k}-t_{k-1}\right)
\delta_{m}\left(  t_{k}-t_{k-1}\right)  \left(  1+\left\vert x\right\vert
^{q}\right)  \left\Vert f\right\Vert _{C_{p}^{2(m+1)}}.
\end{align*}
Now, Lemma \ref{L_Bounded_Product_Q} yields%
\begin{align*}
&  \left\vert \left(  \prod_{i=1}^{k-1}Q_{t_{i}-t_{i-1}}^{i}(\bar
P_{t_{k}-t_{k-1}}^{k}-Q_{t_{k}-t_{k-1}}^{k})\prod_{i=k+1}^{n}\bar
P_{t_{i}-t_{i-1}}^{i}\right)  f\left(  x\right)  \right\vert \\
&  \leq K\left(  \mathcal{A},m\right)  \left(  t_{k}-t_{k-1}\right)
\delta_{m}\left(  t_{k}-t_{k-1}\right)  \left\Vert f\right\Vert _{C_{p}%
^{2(m+1)}}\prod_{i=1}^{k-1}Q_{t_{i}-t_{i-1}}^{i}\left(  \left(  1+\left\vert
x\right\vert ^{q}\right)  \right) \\
&  \leq K\left(  x,\mathcal{A},m\right)  \left(  t_{k}-t_{k-1}\right)
\delta_{m}\left(  t_{k}-t_{k-1}\right)  \left\Vert f\right\Vert _{C_{p}%
^{2(m+1)}}.
\end{align*}
Finally, adding up the estimates
\begin{align*}
&  \left\vert \prod_{i=1}^{n}\bar P_{t_{i}-t_{i-1}}^{i}f\left(  x\right)
-\prod_{i=1}^{n}Q_{t_{i}-t_{i-1}}^{i}f\left(  x\right)  \right\vert \\
&  \leq K\left(  x,\mathcal{A},m\right)  \left\Vert f\right\Vert
_{C_{p}^{2(m+1)}}\sum_{k=1}^{n}\left(  t_{k}-t_{k-1}\right)  \delta_{m}\left(
t_{k}-t_{k-1}\right)  .
\end{align*}

\end{proof}

\subsection{Main error estimate}

\label{sec:mainerr}

\begin{theorem}
\label{TheoMain}Let $\widehat{X}=\{\widehat{X}_{t}\}_{t\in\lbrack0,1]}$ be a
process satisfying assumption $\left(  \mathcal{SR}\right)  .$ Assume that the
operators $\bar P_{t}^{i}:=\bar S^{i-1}\bar P_{t}$ and $Q_{t}^{i}:=\bar
S^{i-1}\widehat{P}_{t}$ satisfy assumptions $\left(  \mathcal{M}\right)  $ and
$\left(  \mathcal{R}\left(  m,\delta_{m}\right)  \right)  ,m\geq2$.

\begin{description}
\item[i)] Assume $(\mathcal{H}_{n+1})\ $and $f\in C_{p}^{2\left(  m+1\right)
}\cap C_{b}^{n+1},n\geq2,p\geq2.$ Then there exist
positive constants $K(x,\mathcal{A},m)$ and $C_{i}(x)$, $i=1,...,n+1$ such
that
\begin{align*}
&  |\mathbb{E}[f\left(  X_{1}\right)  ]-\mathbb{E}[f(\widehat{X}_{1})]|\\
&  \leq C_{1}\left(  x\right)  \left\vert \int_{\left\vert y\right\vert
>1}y(\nu-\bar\nu)\left(  dy\right)  -\bar\mu\right\vert +C_{2}(x)\left\vert
\int_{\mathbb{R}}y^{2}(\nu-\bar\nu)\left(  dy\right)  -\bar\sigma
^{2}\right\vert \\
&  +\sum_{i=3}^{n}C_{i}\left(  x\right)  \left\vert \int_{\mathbb{R}}y^{i}%
(\nu-\bar\nu)\left(  dy\right)  \right\vert \\
&  +C_{n+1}\left(  x\right)  \int_{\mathbb{R}}\left\vert y\right\vert
^{n+1}\left\vert \nu-\bar\nu\right\vert \left(  dy\right)  +K\left(
x,\mathcal{A},m\right)  \left\Vert f\right\Vert _{C_{p}^{2(m+1)}}\bar
\lambda^{-m}.
\end{align*}

\item[ii)] Assume $(\mathcal{H}_{n+1}^{^{\prime}})\ $and $f\in C_{p}%
^{(2\left(  m+1\right)  )\vee\left(  n+1\right)  },n\geq2,p\geq2.$
Then there exist positive constants $K(x,\mathcal{A},m)$ and
$C_{i}(x)$, $i=1,...,n+1$ such that
\begin{align*}
&  |\mathbb{E}[f\left(  X_{1}\right)  ]-\mathbb{E}[f(\widehat{X}_{1})]|\\
&  \leq C_{1}\left(  x\right)  \left\Vert f\right\Vert _{C_{p}^{1}}\left\vert
\int_{\left\vert y\right\vert >1}y(\nu-\bar\nu)\left(  dy\right)  -\bar
\mu\right\vert +C_{2}(x)\left\Vert f\right\Vert _{C_{p}^{2}}\left\vert
\int_{\mathbb{R}}y^{2}(\nu-\bar\nu)\left(  dy\right)  -\bar\sigma
^{2}\right\vert \\
&  +\sum_{i=3}^{n}C_{i}\left(  x\right)  \left\Vert f\right\Vert _{C_{p}^{i}%
}\left\vert \int_{\mathbb{R}}y^{i}(\nu-\bar\nu)\left(  dy\right)  \right\vert
\\
&  +C_{n+1}\left(  x\right)  \left\Vert f\right\Vert _{C_{p}^{n+1}}%
\{\int_{\mathbb{R}}\left\vert y\right\vert ^{n+1}\left\vert \nu-\bar
\nu\right\vert \left(  dy\right)  +\int_{\mathbb{R}}\left\vert y\right\vert
^{n+p+1}\left\vert \nu-\bar\nu\right\vert \left(  dy\right)  \}\\
&  +K\left(  x,\mathcal{A},m\right)  \left\Vert f\right\Vert _{C_{p}^{2(m+1)}%
}\bar\lambda^{-m}.
\end{align*}

\end{description}
\end{theorem}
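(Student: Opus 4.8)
The natural strategy is to insert the intermediate process $\bar X$ and split
\[
\mathbb{E}[f(X_1)] - \mathbb{E}[f(\widehat X_1)] = \mathcal{D}_1 + \widehat{\mathcal{D}}_1,
\]
where $\mathcal{D}_1 = \mathbb{E}[f(X_1)] - \mathbb{E}[f(\bar X_1)]$ compares the true driving measure $\nu$ with its finite-activity approximation $\bar\nu$ (assuming the Brownian part is simulated exactly) and will produce the moment terms in $\nu-\bar\nu$, while $\widehat{\mathcal{D}}_1 = \mathbb{E}[f(\bar X_1)] - \mathbb{E}[f(\widehat X_1)]$ isolates the error of the discretization scheme $\widehat P_t$ between jumps and will produce the $\bar\lambda^{-m}$ term. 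The two pieces are estimated by completely different mechanisms, so I treat them separately and add the bounds.

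For $\widehat{\mathcal{D}}_1$ I use $(\mathcal{SR})$ together with the analogous jump-adapted representation for $\bar X$ itself, namely
\[
\mathbb{E}[\boldsymbol{1}_{\{\bar T_i<1<\bar T_{i+1}\}}f(\bar X_1)]=\mathbb{E}[\boldsymbol{1}_{\{\bar T_i<1<\bar T_{i+1}\}}\bar S^0\bar P_{\bar T_1\wedge 1}\bar S^1\bar P_{\bar T_2-\bar T_1}\cdots\bar S^i\bar P_{1-\bar T_i}f(x)],
\]
which follows from the strong Markov property of $\bar X$ at the jump times $\bar T_j$ and the flow structure of $\bar Y$. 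Conditioning on $\{\bar T_i<1<\bar T_{i+1}\}$ fixes a random partition $\pi=\{0,\bar T_1,\dots,\bar T_i,1\}$, and with $\bar P_t^j:=\bar S^{j-1}\bar P_t$, $Q_t^j:=\bar S^{j-1}\widehat P_t$ the two representations are exactly the operator products $\prod_j\bar P^j_{\cdot}$ and $\prod_j Q^j_{\cdot}$ of Section \ref{SectionApproxOperators}. Applying Theorem \ref{Th_Approx_Operators} conditionally on $\pi$ (legitimate since $(\mathcal M)$ and $(\mathcal R(m,\delta_m))$ are assumed for these operators) bounds each conditional difference by $K\|f\|_{C_p^{2(m+1)}}\sum_k(\Delta_k)\delta_m(\Delta_k)$, where the $\Delta_k$ are the interjump gaps. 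Summing over $i$ and taking expectation over the jump times, with $\delta_m(t)=t^m$, reduces all of $\widehat{\mathcal{D}}_1$ to estimating $\mathbb{E}[\sum_k\Delta_k^{m+1}]$ for the $\mathrm{Poisson}(\bar\lambda)$ partition; the exchange of the $i$-sum with the expectation and its finiteness are guaranteed by Lemma \ref{L_Bounded_Product_Q}. The rate is then extracted as follows: conditionally on $N=n$ jumps in $[0,1]$ the gaps are the spacings of $n$ i.i.d.\ uniforms, each $\mathrm{Beta}(1,n)$, so $\mathbb{E}[\Delta_k^{m+1}\mid N=n]=(m+1)!\,n!/(n+m+1)!$ and $\mathbb{E}[\sum_k\Delta_k^{m+1}\mid N=n]=(n+1)(m+1)!\,n!/(n+m+1)!\sim (m+1)!\,n^{-m}$; averaging over $N\sim\mathrm{Poisson}(\bar\lambda)$ and using $N\approx\bar\lambda$ gives $\mathbb{E}[\sum_k\Delta_k^{m+1}]\le C\bar\lambda^{-m}$, hence the last term $K\|f\|_{C_p^{2(m+1)}}\bar\lambda^{-m}$. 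I would relegate the spacings computation to the appendix.

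For $\mathcal{D}_1$ I set $u(s,x):=\mathbb{E}[f(\bar X_1(s,x))]$ and use the Dynkin/telescoping identity
\[
\mathcal{D}_1=\int_0^1\mathbb{E}[(\mathcal L-\bar{\mathcal L})u(s,\cdot)(X_{s-})]\,ds,
\]
in which the $b$- and $\sigma$-parts of the generators cancel, leaving the jump integrals against $\nu$ and $\bar\nu$ and the extra drift $\bar\mu$ and diffusion $\bar\sigma$ of $\bar Z$. Taylor-expanding $u(s,x+h(x)y)-u(s,x)$ in $y$ to order $n$ and integrating against $\nu-\bar\nu$ decomposes $(\mathcal L-\bar{\mathcal L})u$ into a first-order term pairing with $\int_{|y|>1}y(\nu-\bar\nu)(dy)-\bar\mu$, a second-order term pairing with $\int_{\mathbb{R}}y^2(\nu-\bar\nu)(dy)-\bar\sigma^2$ (the matched moments of $\bar Z$ absorbing the first two orders), the terms $\int_{\mathbb{R}}y^i(\nu-\bar\nu)(dy)$ for $3\le i\le n$, and a remainder controlled by $\int_{\mathbb{R}}|y|^{n+1}|\nu-\bar\nu|(dy)$. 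The constants $C_i(x)$ collect $\sup_s$ of the relevant derivatives of $u$, which are finite with the stated growth once one shows, uniformly over $\bar\nu\in\mathcal A$, that $x\mapsto u(s,x)$ inherits $n+1$ bounded (part (i)) or polynomially bounded (part (ii)) derivatives from $f$; this is where $(\mathcal H_{n+1})$ resp.\ $(\mathcal H_{n+1}')$ and the smoothness of $b,\sigma,h$ enter, via differentiation of the flow and moment bounds on $\bar X_s$ uniform in $\mathcal A$ (obtained from $(\mathcal M)$-type estimates and Gronwall).

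The main obstacle is precisely this regularity input and the dichotomy between (i) and (ii). Under $(\mathcal H_{n+1})$ with $f\in C_b^{n+1}$ the derivatives of $u$ are \emph{bounded}, so neither the remainder nor the $C_i(x)$ interact with the growth of $f$, yielding the clean bound in (i). Under $(\mathcal H_{n+1}')$ with $f$ only of polynomial growth, differentiating the flow produces factors growing polynomially in the state, and bounding the $n$-th order remainder against $\nu-\bar\nu$ forces both the higher moment $\int_{\mathbb{R}}|y|^{n+p+1}|\nu-\bar\nu|(dy)$ and the norms $\|f\|_{C_p^i}$ to surface; the uniform-in-$\mathcal A$ moment control on $\bar X_s$ is what keeps these terms finite. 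Combining the two estimates for $\mathcal{D}_1$ and $\widehat{\mathcal{D}}_1$ gives the two displayed inequalities.
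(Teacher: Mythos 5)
Your proposal is correct, and its overall architecture coincides with the paper's: the decomposition $\mathbb{E}[f(X_1)]-\mathbb{E}[f(\widehat X_1)]=\mathcal{D}_1+\widehat{\mathcal{D}}_1$ is exactly how the proof is organized (Theorems \ref{Th_E[f(X1)-f(X1eps)]} and \ref{Th_E[f(X1eps)-f(X1epsapprox)]}), and your treatment of $\widehat{\mathcal{D}}_1$ --- the jump-adapted operator representation of $\bar X$ (Proposition \ref{Pr_SR_Xeps_1}), assumption $(\mathcal{SR})$, and Theorem \ref{Th_Approx_Operators} applied to the random partition --- is the paper's argument essentially verbatim. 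Two local deviations deserve comment. First, in $\mathcal{D}_1$ you define $u(s,x)=\mathbb{E}[f(\bar X_1(s,x))]$ through the \emph{approximating} flow and integrate $(\mathcal{L}-\bar{\mathcal{L}})u$ along the true process $X$, whereas the paper takes the mirror image: $u(t,x)=\mathbb{E}[f(X_1(t,x))]$ (Lemma \ref{L_u(t,x)}) with It\^{o}'s formula applied along $\bar X$. Both identities are valid and yield the same Taylor-expansion structure, but the paper's orientation buys something you have to work for: since its $u$ does not depend on $\bar\nu$, the $\bar\nu$-independence of the constants $C_i(x)$ follows from derivative bounds for the flow of $X$ alone (Lemma \ref{L_Derivatives_Lp_Finiteness}) together with moment bounds for $\bar X$ uniform in $\mathcal{A}$ (Lemma \ref{L_Lp_Uniform_Bounds}); in your version $u$ itself depends on $\bar\nu$, so you must prove derivative bounds for the flow of $\bar X$ \emph{uniformly over} $\mathcal{A}$ --- you correctly flag this, and it is feasible via Kunita's inequality under $(\mathcal{H}_{n+1})$, but it is an extra lemma the paper's choice avoids, and this $\bar\nu$-independence is precisely what the optimization over $\bar\nu$ in Section \ref{sec:optimal} later requires. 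Second, for the expected sum of powers of the inter-jump spacings the paper cites Lemma 11 of \cite{KH-T09} (through Proposition \ref{Pr_Sum_Series}), while you compute it directly from the $\mathrm{Beta}(1,n)$ law of uniform spacings; your computation is right and makes the argument self-contained --- conditionally on $N=n$ jumps the sum equals $(m+1)!\,(n+1)!/(n+m+1)!$, and averaging over $N\sim\mathrm{Poisson}(\bar\lambda)$ gives $(m+1)!\,\mathbb{E}[(N+1)!/(N+m+1)!]\le (m+1)!\,\bar\lambda^{-m}$ exactly --- though the heuristic step ``$N\approx\bar\lambda$'' should be replaced by this exact Poisson computation to be rigorous.
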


\begin{proof}
Follows from Theorems \ref{Th_E[f(X1)-f(X1eps)]} and
\ref{Th_E[f(X1eps)-f(X1epsapprox)]}.
\end{proof}

\begin{example}
The first simple example of application of the above result is to parametrize
the set $\mathcal{A}$ by a parameter $\varepsilon\in(0,1]$ so that:
\begin{align*}
\bar\mu\equiv\mu_{\varepsilon}  &  =\int_{\left\vert y\right\vert >1}y(\nu
-\nu_{\varepsilon})\left(  dy\right)  ,\\
\bar\sigma^{2}\equiv\sigma_{\varepsilon}^{2}  &  =\int_{\mathbb{R}}y^{2}%
(\nu-\nu_{\varepsilon})\left(  dy\right)  ,\\
\bar\nu(dy)\equiv\nu_{\varepsilon}(dy)  &  =\mathbf{1}_{\{|y|>\varepsilon
\}}\nu(dy).
\end{align*}
Take $\widehat{P}_{t}\equiv\widehat{P}_{t}^{\varepsilon}$ to be the
operator associated with a one step Euler scheme, so that the overall
approximation consists in applying the Euler scheme between the jumps
of $\bar Z$. Then the above result reads%
\[
|\mathbb{E}[f\left(  X_{1}\right)  ]-\mathbb{E}[f(\widehat{X}_{1}%
^{\varepsilon})]|\leq C_{3}(x)\int_{|y|\leq\varepsilon}\left\vert y\right\vert
^{3}\nu\left(  dy\right)  +K\left(  x\right)  \left\Vert f\right\Vert
_{C_{p}^{4}}\lambda_{\varepsilon}^{-1}.
\]

When $\sigma\equiv0$, this result corresponds to Theorem 2 in \cite{KH-T09}.

In the particular case of an $\alpha$-stabe-like L\'evy process with
L\'evy density \mbox{$\sim \frac{c}{|x|^{1+\alpha}}$} near zero, one obtains
that the best convergence rate is $\lambda_{\varepsilon}^{-1}$ for
$\alpha\leq 1$
and the worse case is $\lambda_{\varepsilon}^{-1/2}$ for $\alpha\rightarrow2$.

Note that we could have applied high order schemes for Wiener driven SDEs in
order to improve the last term above to $\lambda_{\varepsilon}^{-m}$.

\end{example}

Additional examples, algorithms, and numerical illustrations will be
given in Section \ref{numerics.sec}.

\section{Proof of the main error estimate}
\subsection{Estimation of $\mathcal{D}_{1}=\mathbb{E}[f\left(  X_{1}\right)
]-\mathbb{E}[f\left(  \bar X_{1}\right)  ]$}

\label{sec:D}

Thoughout this section we will use the notation $u\left(  t,x\right)
=\mathbb{E}[f(X_{1}(  t,x)  )]$. Some
auxiliary properties of this function $u\left(  t,x\right)  $ are
established in Lemma \ref{L_u(t,x)}.

\begin{theorem}
\label{Th_E[f(X1)-f(X1eps)]}

\begin{description}
\item[i)] Assume $(\mathcal{H}_{n+1})$ and $f\in C_{b}^{n+1},n\geq2.$ Then, we
have the following expansion%
\begin{align}
&  \mathbb{E}[f\left(  X_{1}\right)  -f\left(  \bar X_{1}\right)  ]=\int
_{0}^{1}\bar B_{t}^{1}dt\left\{ \int_{\left\vert y\right\vert >1}y(\nu-\bar
\nu)\left(  dy\right)  -\bar\mu\right\} \nonumber\\
&  +\int_{0}^{1}\bar B_{t}^{2}dt\left(  \int_{\mathbb{R}}y^{2}(\nu-\bar
\nu)\left(  dy\right)  -\bar\sigma^{2}\right) \\
&  +\sum_{i=3}^{n}\int_{0}^{1}\bar B_{t}^{i}dt\int_{\mathbb{R}}y^{i}(\nu
-\bar\nu)\left(  dy\right)  +\int_{0}^{1}\bar B_{t}^{n+1}%
dt,\label{Eq_Expansion_E[X1-X1eps]3}%
\end{align}
where%
\begin{align*}
\bar B_{t}^{i}  &  :=\mathbb{E}\left[ \sum_{|\alpha|=i}\frac{1}{\alpha!}%
\frac{\partial^{|\alpha|}}{\partial x^{\alpha}}u\left(  t,\bar X_{t}\right)
h^{\alpha}\left(  \bar X_{t}\right)  \right] ,\quad i=1,...,n,\\
\bar B_{t}^{n+1}  &  :=\mathbb{E}\Bigg [\sum_{|\alpha|=n+1}\int_{\mathbb{R}%
}\left(  \int_{0}^{1}\frac{\partial^{|\alpha|}}{\partial x^{\alpha}}u\left(
t,\bar X_{t}+\theta yh\left(  \bar X_{t}\right)  \right)  \frac{\left(
1-\theta\right)  ^{|\alpha|-1}}{n!}d\theta\right) \\
&  \times h^{\alpha}\left(  \bar X_{t}\right)  y^{n+1}(\nu-\bar\nu)\left(
dy\right)  \Bigg ],
\end{align*}
and
\begin{align}
|\bar B_{t}^{i}|  &  \leq C_{i}\left(  x\right)  ,\quad i=1,...,n,\label{star}%
\\
|\bar B_{t}^{n+1}|  &  \leq C_{n+1}\left(  x\right)  \int_{\mathbb{R}%
}\left\vert y\right\vert ^{n+1}\left\vert \nu-\bar\nu\right\vert \left(
dy\right)  ,\nonumber
\end{align}
where the constants $C_{i}\left(  x\right)  ,i=1,...,n+1,$ do not depend on
$\bar\nu.$

\item[ii)] Assume $(\mathcal{H}_{n+1}^{^{\prime}})$ and $f\in C_{p}%
^{n+1},n\geq2$. Then we have that the expansion $\left(
\ref{Eq_Expansion_E[X1-X1eps]3}\right)  $ also holds with $|\bar B_{t}%
^{i}|\leq C_{i}\left(  x\right)  \left\Vert f\right\Vert _{C_{p}^{i}%
},i=1,...,n, $ and%
\begin{align*}
\left\vert \int_{0}^{1}\bar B_{t}^{n+1}dt\right\vert  &  \leq C_{n+1}\left(
x\right)  \left\Vert f\right\Vert _{C_{p}^{n+1}}\Bigg \{\int_{\mathbb{R}%
}\left\vert y\right\vert ^{n+1}\left\vert \nu-\bar\nu\right\vert \left(
dy\right) \\
&  +\int_{\mathbb{R}}\left\vert y\right\vert ^{n+p+1}\left\vert \nu-\bar
\nu\right\vert \left(  dy\right)  \Bigg \},
\end{align*}
where the constants $C_{i}\left(  x\right)  ,i=1,...,n+1,$ do not depend on
$\bar\nu.$
\end{description}
\end{theorem}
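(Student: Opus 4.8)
The plan is to compare $X$ and $\bar X$ by exploiting that both processes share the same Brownian motion $B$ and differ only in their driving jump structure: $X$ is driven by $Z$ with Lévy measure $\nu$, while $\bar X$ is driven by $\bar Z$ with measure $\bar\nu$, drift correction $\bar\mu$, and extra diffusion $\bar\sigma$. The natural tool is the function $u(t,x)=\mathbb{E}[f(X_1(t,x))]$, which solves the backward Kolmogorov equation associated with the generator of $X$; by construction $u(1,x)=f(x)$ and $u(0,x)=\mathbb{E}[f(X_1)]$. First I would apply Itô's formula for jump processes to $t\mapsto u(t,\bar X_t)$. Since $u$ satisfies $(\partial_t + \mathcal{L})u = 0$ where $\mathcal{L}$ is the generator of $X$, and $\bar X$ has generator $\bar{\mathcal{L}}$, the process $u(t,\bar X_t)$ is not a martingale; its drift is governed by the \emph{difference} of generators $(\bar{\mathcal{L}}-\mathcal{L})u$. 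Taking expectations and using $u(0,\bar X_0)=\mathbb{E}[f(X_1)]$, $u(1,\bar X_1)=f(\bar X_1)$ yields
\[
\mathbb{E}[f(X_1)-f(\bar X_1)] = -\int_0^1 \mathbb{E}\big[(\bar{\mathcal{L}}-\mathcal{L})u(t,\bar X_t)\big]\,dt.
\]

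The second, and central, step is to expand the operator difference $\bar{\mathcal{L}}-\mathcal{L}$ applied to $u(t,\cdot)$ into the moment-weighted terms displayed in the theorem. The two generators agree on the $b\cdot\nabla$ and $\tfrac12\sigma\sigma^\top:\nabla^2$ pieces (same $b,\sigma,B$), so only the jump/drift-correction/extra-diffusion parts survive. For the jump part, the generator acts on a test function $g$ through $\int_{\mathbb{R}}\big(g(x+h(x)y)-g(x)-\mathbf{1}_{|y|\le1}\,y\,h(x)\cdot\nabla g(x)\big)\,\rho(dy)$ for $\rho\in\{\nu,\bar\nu\}$, and the drift correction $\bar\mu$ together with $\bar\sigma$ enters through $\bar b$ and the $\tfrac12\bar\sigma^2 (h h^\top):\nabla^2$ term. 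The key device is a Taylor expansion of $g(x+h(x)y)-g(x)$ to order $n+1$ in the jump size $y$:
\[
g(x+h(x)y)-g(x)=\sum_{i=1}^{n}\frac{y^i}{i!}\sum_{|\alpha|=i}\frac{i!}{\alpha!}\,h^\alpha(x)\,\frac{\partial^{|\alpha|}g}{\partial x^\alpha}(x)+R_{n+1}(x,y),
\]
with $R_{n+1}$ the integral remainder of order $n+1$. Substituting $g=u(t,\cdot)$ and integrating against $(\nu-\bar\nu)(dy)$ makes the coefficients of $y^i$ collect into precisely the $\bar B^i_t$ defined in the statement: the $i=1$ term pairs with $\int_{|y|>1}y(\nu-\bar\nu)(dy)-\bar\mu$ (the compensator-and-drift bookkeeping), the $i=2$ term with $\int y^2(\nu-\bar\nu)(dy)-\bar\sigma^2$ (since the extra diffusion $\bar\sigma$ exactly absorbs the second-order jump moment), and $i=3,\dots,n$ with the raw moments $\int y^i(\nu-\bar\nu)(dy)$. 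The remainder term, integrated against $(\nu-\bar\nu)$, produces $\bar B^{n+1}_t$ in its stated integral-remainder form.

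The final step is to establish the bounds \eqref{star}. Here I would invoke the auxiliary estimates on $u(t,x)$ promised in Lemma \ref{L_u(t,x)}: under $(\mathcal{H}_{n+1})$ with $f\in C_b^{n+1}$, the spatial derivatives $\partial^\alpha u(t,\cdot)$ for $|\alpha|\le n+1$ are bounded uniformly in $t$, which together with the boundedness of $h^\alpha$ (from $h\in C_b^n$) gives $|\bar B^i_t|\le C_i(x)$ for $i\le n$, with constants independent of $\bar\nu$ because the derivative bounds on $u$ depend only on $f$ and the fixed data $b,\sigma,h,\nu$. For $\bar B^{n+1}_t$, bounding the remainder integrand by $\sup_{|\alpha|=n+1}\|\partial^\alpha u\|_\infty\,\|h\|^{n+1}$ and pulling out $|y|^{n+1}$ yields the factor $\int_{\mathbb{R}}|y|^{n+1}|\nu-\bar\nu|(dy)$. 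For part (ii), where $f\in C_p^{n+1}$ only has polynomially bounded derivatives, the same scheme applies but the derivative bounds on $u$ grow like $(1+\|x\|^p)$; after evaluating at the random argument $\bar X_t$ (and at the shifted argument $\bar X_t+\theta y h(\bar X_t)$ in the remainder) one must control $\mathbb{E}[(1+\|\bar X_t\|^p)\cdots]$, which introduces the extra higher-order moment term $\int |y|^{n+p+1}|\nu-\bar\nu|(dy)$ coming from the $\|h(\bar X_t)y\|^p$-type growth in the shift. The main obstacle is this last point: one needs a uniform-in-$\bar\nu$ moment bound $\sup_t\mathbb{E}[\|\bar X_t\|^{p'}]<\infty$ for the approximating family, which is exactly what assumptions $(\mathcal{H}_n)$/$(\mathcal{H}_n')$ (uniform moment control over $\mathcal{A}$) are designed to provide, and one must verify that the polynomial-growth constants genuinely do not depend on $\bar\nu$ — this is what makes the subsequent optimization over $\bar\nu$ legitimate.
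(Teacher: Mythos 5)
Your proposal is correct and follows essentially the same route as the paper: both introduce $u(t,x)=\mathbb{E}[f(X_{1}(t,x))]$, apply It\^{o}'s formula to $u(t,\bar X_{t})$ so that only the difference of generators survives (the paper writes this out explicitly using the PIDE from Lemma \ref{L_u(t,x)}, which is your $(\bar{\mathcal{L}}-\mathcal{L})u$ formulation), then Taylor-expand the jump increment to order $n+1$ in $y$ and collect the moment-weighted terms, with the bounds \eqref{star} obtained from Lemmas \ref{L_u(t,x)} and \ref{L_Lp_Uniform_Bounds} exactly as you describe, including the origin of the extra $\int|y|^{n+p+1}|\nu-\bar\nu|(dy)$ term in part ii). No gaps worth noting.
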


\begin{proof}
To simplify the notation we will give the proof in the case $d=k=1.$ Note that
$\mathbb{E}[f\left(  X_{1}\right)  ]=\mathbb{E}[f\left(  X_{1}\left(
0,x\right)  \right)  ]=u\left(  0,x\right)  $ and
\[
\mathbb{E}[f\left(  X_{1}\right)  -f\left(  \bar X_{1}\right)  ]=\mathbb{E}%
[u\left(  0,x\right)  -u\left(  1,\bar X_{1}\right)  ].
\]
Applying It\^{o} formula to $u\left(  1,\bar X_{1}\right)  $ and taking into
account the equation satisfied by $u\left(  t,x\right) $ (see Lemma
\ref{L_u(t,x)}), we have%
\begin{align*}
&  \mathbb{E}[u\left(  0,x\right)  -u\left(  1,\bar X_{1}\right)  ]\\
&  =\mathbb{E}\left[ \int_{0}^{1}\frac{\partial u}{\partial x}\left(  t,\bar
X_{t}\right)  h\left(  \bar X_{t}\right)  \left\{ \int_{\left\vert
y\right\vert >1}y(\nu-\bar\nu)\left(  dy\right)  -\bar\mu\right\} dt\right] \\
&  +\mathbb{E}\left[ \int_{0}^{1}\int_{\mathbb{R}}\left\{ u\left(  t,\bar
X_{t}+h\left(  \bar X_{t}\right)  y\right)  -u\left(  t,\bar X_{t}\right)
-\frac{\partial u}{\partial x}\left(  t,\bar X_{t}\right)  h\left(  \bar
X_{t}\right)  y\right\} (\nu-\bar\nu)\left(  dy\right)  dt\right] \\
&  -\mathbb{E}\left[ \frac{\bar\sigma^{2}}{2}\int_{0}^{1}\frac{\partial^{2}%
u}{\partial x^{2}}\left(  t,\bar X_{t}\right)  h^{2}\left(  \bar X_{t}\right)
dt\right] .
\end{align*}
Making a Taylor expansion of order $n\geq2,$ we obtain%
\begin{align*}
&  \mathbb{E}\left[ \int_{0}^{1}\int_{\mathbb{R}}\left\{ u\left(  t,\bar
X_{t}+h\left(  \bar X_{t}\right)  y\right)  -u\left(  t,\bar X_{t}\right)
-\frac{\partial u}{\partial x}\left(  t,\bar X_{t}\right)  h\left(  \bar
X_{t}\right)  y\right\} (\nu-\bar\nu)\left(  dy\right)  dt\right] \\
&  =\sum_{i=2}^{n}\mathbb{E}\left[ \int_{0}^{1}\int_{\mathbb{R}}\frac
{\partial^{i}}{\partial x^{i}}u\left(  t,\bar X_{t}\right)  h^{i}\left(  \bar
X_{t}\right)  y^{i}(\nu-\bar\nu)\left(  dy\right)  dt\right] \\
&  +\mathbb{E}\left[ \int_{0}^{1}\int_{\mathbb{R}}\left(  \int_{0}^{1}%
\frac{\partial^{n+1}}{\partial x^{n+1}}u\left(  t,\bar X_{t}+\theta yh\left(
\bar X_{t}\right)  \right)  \frac{\left(  1-\theta\right)  ^{n}}{n!}%
d\theta\right)  \times h^{n+1}\left(  \bar X_{t}\right)  y^{n+1}(\nu-\bar
\nu)\left(  dy\right)  dt\right]
\end{align*}
Hence, collecting terms, we have
\begin{align*}
&  \mathbb{E}[u\left(  0,x\right)  -u\left(  1,\bar X_{1}\right)  ]\\
&  =\int_{0}^{1}\mathbb{E}\left[ \frac{\partial u}{\partial x}\left(  t\bar
X_{t}\right)  h\left(  \bar X_{t}\right)  \right] dt\left\{ \int_{\left\vert
y\right\vert >1}y(\nu-\bar\nu)\left(  dy\right)  +\int_{\left\vert
y\right\vert \leq1}y\bar\nu\left(  dy\right)  -\bar\mu\right\} \\
&  +\left(  \int_{\mathbb{R}}y^{2}(\nu-\bar\nu)\left(  dy\right)  -\bar
\sigma^{2}\right)  \mathbb{E}\left[ \int_{0}^{1}\frac{1}{2!}\frac{\partial
^{2}}{\partial x^{2}}u\left(  t,\bar X_{t}\right)  h^{i}\left(  \bar
X_{t}\right)  dt\right] \\
&  +\sum_{i=3}^{n}\int_{0}^{1}\mathbb{E}\left[ \frac{1}{i!}\frac{\partial^{i}%
}{\partial x^{i}}u\left(  t,\bar X_{t}\right)  h^{i}\left(  \bar X_{t}\right)
\right] dt\int_{\mathbb{R}}y^{i}(\nu-\bar\nu)\left(  dy\right) \\
&  +\int_{0}^{1}\mathbb{E}\Bigg [\int_{\mathbb{R}}\left(  \int_{0}^{1}%
\frac{\partial^{n+1}}{\partial x^{n+1}}u\left(  t,\bar X_{t}+\lambda yh\left(
\bar X_{t}\right)  \right)  \frac{\left(  1-\theta\right)  ^{n}}{n!}%
d\theta\right)  \times h^{n+1}\left(  \bar X_{t}\right)  y^{n+1}(\nu-\bar
\nu)\left(  dy\right) \Bigg ]dt
\end{align*}
and we obtain the expansion $\left(  \ref{Eq_Expansion_E[X1-X1eps]3}\right)
.$ Under the assumption $(\mathcal{H}_{n+1}),$ using Lemmas
\ref{L_Lp_Uniform_Bounds} and \ref{L_u(t,x)}, one obtains the first inequality
in (\ref{star}). Similarly, if we assume $(\mathcal{H}_{n+1}^{^{\prime}}),$
using Lemmas \ref{L_Lp_Uniform_Bounds} and \ref{L_u(t,x)}, one obtains the
second inequality in (\ref{star}).
\end{proof}

\subsection{Estimation of $\widehat{\mathcal{D}}_{1}=\mathbb{E}[f\left(  \bar
X_{1}\right)  ]-\mathbb{E}[f(\widehat{X}_{1})]$}

\label{sec:hatD}

\begin{lemma}
\label{L_Expectation_Operator}For $i\in\mathbb{N}\cup\{0\},$ one has that
\[
\mathbb{E[}\boldsymbol{1}_{\{\bar T_{i}<1<\bar T_{i+1} \}}f(\bar
X_{1})]=\mathbb{E}[\boldsymbol{1}_{\{\bar T_{i}<1<\bar T_{i+1}\}}\bar
P_{1-\bar T_{i}} f(\bar X_{\bar T_{i}})].
\]

\end{lemma}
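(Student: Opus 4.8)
The plan is to reduce the identity to the pathwise fact that, on the event $A_i:=\{\bar T_i<1<\bar T_{i+1}\}$, the approximating process $\bar X$ has no jumps on $(\bar T_i,1]$ and therefore coincides there with the continuous flow $\bar Y$ of (\ref{Equ_Y}). First I would record this observation: the jump times of $\bar X$ are exactly the $\bar T_j$, and on $A_i$ none of them lies in $(\bar T_i,1]$, so the restriction of $\bar X$ to $[\bar T_i,1]$ solves the jump-free SDE (\ref{Equ_Y}) started at time $\bar T_i$ from the random point $\bar X_{\bar T_i}$. Hence on $A_i$ one has $\bar X_1=\bar Y_1(\bar T_i,\bar X_{\bar T_i})$, where $\bar Y_1(\bar T_i,\cdot)$ is built from the increments of $B$ and $W$ over $(\bar T_i,1]$.

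The only genuine difficulty is that $\bar T_i$ is random (a functional of the Poisson measure $\bar N$), so the increments of $B$ and $W$ on $(\bar T_i,1]$ are not a priori independent of the information available at time $\bar T_i$, and one cannot directly apply a flow identity at a deterministic time. I would remove this obstacle by conditioning on the entire jump structure. Set $\mathcal{G}:=\sigma(\bar N)$. Since $B$ and $W$ are independent of $\bar N$ by construction, conditionally on $\mathcal{G}$ they remain standard Brownian motions, while all the $\bar T_j$ and jump sizes $\Delta\bar Z_{\bar T_j}$ become deterministic; in particular $\boldsymbol{1}_{A_i}$ is $\mathcal{G}$-measurable. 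Pulling this indicator out gives
\[
\mathbb{E}[\boldsymbol{1}_{A_i}f(\bar X_1)]=\mathbb{E}\!\left[\boldsymbol{1}_{A_i}\,\mathbb{E}[f(\bar X_1)\mid\mathcal{G}]\right].
\]

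It then remains to evaluate $\mathbb{E}[f(\bar X_1)\mid\mathcal{G}]$ on $A_i$. Freezing $\bar T_i$ to its deterministic value $\tau_i<1$, the variable $\bar X_{\bar T_i}$ is measurable with respect to $\mathcal{G}\vee\sigma(B_s,W_s:s\le\tau_i)$, whereas $\bar Y_1(\tau_i,x)$ depends only on the increments of $B,W$ over $(\tau_i,1]$, which are independent of $\sigma(B_s,W_s:s\le\tau_i)$ now that $\tau_i$ is deterministic. Conditioning further on $\sigma(B_s,W_s:s\le\tau_i)$ and using this independence together with the time-homogeneity of the coefficients $\bar b,\sigma,\bar\sigma h$ of (\ref{Equ_Y}) (so that $\bar Y_1(\tau_i,x)$ has the same law as $\bar Y_{1-\tau_i}(0,x)$), I obtain $\mathbb{E}[f(\bar Y_1(\tau_i,x))]=\mathbb{E}[f(\bar Y_{1-\tau_i}(0,x))]=(\bar P_{1-\tau_i}f)(x)$, evaluated at $x=\bar X_{\bar T_i}$. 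Thus $\mathbb{E}[f(\bar X_1)\mid\mathcal{G}]=\mathbb{E}[(\bar P_{1-\bar T_i}f)(\bar X_{\bar T_i})\mid\mathcal{G}]$ on $A_i$, and substituting into the display above yields the claim. The case $i=0$ is identical, with $\bar T_0=0$ and $\bar X_{\bar T_0}=x$. Throughout, the interchanges of expectation and conditioning are justified by the polynomial moment bounds on $\bar X$ (Lemma \ref{L_Lp_Uniform_Bounds}) together with $f\in C_p$, which ensure integrability of both $f(\bar X_1)$ and $\bar P_{1-\bar T_i}f(\bar X_{\bar T_i})$.
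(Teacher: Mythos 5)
Your proof is correct and follows essentially the same route as the paper's: both rest on the observation that on $\{\bar T_i<1<\bar T_{i+1}\}$ the process $\bar X$ solves the jump-free SDE (\ref{Equ_Y}) on $(\bar T_i,1]$, then freeze the random time by conditioning on the jump structure (the paper uses $\bar{\mathcal{H}}^{i,i}=\sigma(\bar X_{\bar T_i},\bar T_1,\dots,\bar T_{i+1})$ where you use $\sigma(\bar N)$ together with the Brownian past), and conclude via the time-homogeneity Lemma \ref{L_Law_of_X} and the definition of $\bar P_t$. The only difference is cosmetic: you spell out the independence-of-increments argument that justifies the substitution $t=\bar T_i$, $x=\bar X_{\bar T_i}$, which the paper performs implicitly in its conditional-expectation computation.
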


\begin{proof}
Define $\bar{\mathcal{H}}^{i,j}:=\mathcal{\sigma}(\bar X_{\bar T_{j}},\bar
T_{1},...,\bar T_{i+1}),i\in\mathbb{N}\cup\{0\},j=1,..,i.$ Then, on the set
$\left\{  \bar T_{i}<1\right\}  $
\begin{align*}
&  \mathbb{E}[f(\bar X_{1}(\bar T_{i},\bar X_{\bar T_{i}}))|\bar{\mathcal{H}%
}^{i,i}]\\
&  =\mathbb{E}\Bigg [f(x+\int_{t}^{1}\bar b(\bar X_{s}(t,x))ds\\
&  +\int_{t}^{1}\sigma(\bar X_{s}(t,x))dB_{s}+\bar\sigma\int_{t}^{1}h(\bar
X_{s}(t,x))dW_{s})\Bigg |\bar{\mathcal{H}}^{i,i}\Bigg ]\Bigg |_{t=\bar
T_{i},x=\bar X_{\bar T_{i}}}\\
&  =\mathbb{E}[f\left(  \bar Y_{1}\left(  t,x\right)  \right)  ]|_{t=\bar
T_{i},x=\bar X_{\bar T_{i}}},
\end{align*}
where in the last equality we have used that $\bar X_{s}\left(  t,x\right)  $
satisfies the same SDE as $\bar Y_{s}(t,x)$ on $\bar T_{i}\le t<1<\bar
T_{i+1}$. Now applying Lemma \ref{L_Law_of_X} and the definition of $(\bar
P_{t}f)\left(  x\right)  $ we obtain the result.
\end{proof}

\begin{remark}
Applying the previous lemma with $i=0$ and using that $\bar S^{0}$ is the
identity operator we obtain that%
\[
\mathbb{E}[\boldsymbol{1}_{\{1<\bar T_{1}\}}f(\bar X_{1})]=\mathbb{E}%
[\boldsymbol{1}_{\{1<\bar T_{1}\}}\bar S^{0}\bar P_{1}f\left(  x\right)  ].
\]

\end{remark}

\begin{proposition}
\label{Pr_SR_Xeps_1}For $i\in\mathbb{N},$ the following equality holds.
\[
\mathbb{E}[\boldsymbol{1}_{\{\bar T_{i}<1<\bar T_{i+1} \}}f\left(  \bar
X_{1}\right)  ]=\mathbb{E}[\boldsymbol{1}_{\{\bar T_{i}<1<\bar T_{i+1}\}}\bar
S^{0}\bar P_{\bar T_{1}}\bar S^{1}\bar P_{\bar T_{2}-\bar T_{1}}\cdots\bar
S^{i}\bar P_{1-\bar T_{i}}f\left(  x\right)  ].
\]

\end{proposition}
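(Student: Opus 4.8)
The plan is to prove the identity by a finite induction that strips off the jumps and the continuous segments one at a time, starting from the last segment (already handled by Lemma~\ref{L_Expectation_Operator}) and terminating at $\bar T_0=0$, where $\bar X_0=x$ and $\bar S^0=\mathrm{id}$. Write $A_i=\{\bar T_i<1<\bar T_{i+1}\}$. Lemma~\ref{L_Expectation_Operator} gives the starting point
\[
\mathbb{E}[\boldsymbol{1}_{A_i}f(\bar X_1)]=\mathbb{E}[\boldsymbol{1}_{A_i}(\bar P_{1-\bar T_i}f)(\bar X_{\bar T_i})],
\]
and it remains to convert the dependence on $\bar X_{\bar T_i}$ into the announced operator composition evaluated at $x$. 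The structural fact that makes this possible is that $\bar Z$ is compound Poisson: its jump times form a Poisson process that is independent of the i.i.d.\ jump sizes and of $(B,W)$. Consequently the indicator $\boldsymbol{1}_{A_i}$, which is a function of the jump times alone, is measurable with respect to any $\sigma$-field for which the jump sizes and Brownian increments remain independent, so it can be carried through each conditioning step while those quantities are integrated out.

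Each induction step consists of two conditionings. For the \emph{jump step} at $\bar T_j$ I would condition on $\mathcal{G}_j:=\sigma(\bar T_1,\bar T_2,\dots)\vee\mathcal{F}_{\bar T_j-}$, decompose $\bar X_{\bar T_j}=\bar X_{\bar T_j-}+h(\bar X_{\bar T_j-})\,\Delta\bar Z_{\bar T_j}$, and use that $\Delta\bar Z_{\bar T_j}$ is independent of $\mathcal{G}_j$ with the common jump law; since $\bar X_{\bar T_j-}$ and the relevant time argument ($1-\bar T_j$ or $\bar T_{j+1}-\bar T_j$) are $\mathcal{G}_j$-measurable, the definition $(\bar S^jg)(y)=\mathbb{E}[g(y+h(y)\Delta\bar Z_{\bar T_j})]$ turns $(\,\cdots)(\bar X_{\bar T_j})$ into $(\bar S^j\,\cdots)(\bar X_{\bar T_j-})$. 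For the \emph{continuous step} on $(\bar T_{j-1},\bar T_j)$ I would condition on $\sigma(\bar T_1,\bar T_2,\dots)\vee\mathcal{F}_{\bar T_{j-1}}$; there $\bar X$ solves the continuous SDE~\eqref{Equ_Y}, so by Lemma~\ref{L_Law_of_X}, the time-homogeneity of $\bar Y$, and the definition of $\bar P$ (exactly as in the proof of Lemma~\ref{L_Expectation_Operator}) the map $(\bar S^j\,\cdots)(\bar X_{\bar T_j-})$ becomes $(\bar P_{\bar T_j-\bar T_{j-1}}\bar S^j\,\cdots)(\bar X_{\bar T_{j-1}})$, the random length $\bar T_j-\bar T_{j-1}$ being treated as a frozen parameter.

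Iterating these two steps for $j=i,i-1,\dots,1$ prepends a factor $\bar P_{\bar T_j-\bar T_{j-1}}\bar S^j$ at each stage and lowers the evaluation point from $\bar X_{\bar T_j-}$ to $\bar X_{\bar T_{j-1}}$, until at $j=1$ one reaches $\bar X_{\bar T_0}=\bar X_0=x$; inserting the identity operator $\bar S^0$ there reproduces exactly
\[
\mathbb{E}[\boldsymbol{1}_{A_i}\,\bar S^0\bar P_{\bar T_1}\bar S^1\bar P_{\bar T_2-\bar T_1}\cdots\bar S^i\bar P_{1-\bar T_i}f(x)].
\]
Because each $\bar P$ and $\bar S$ has already absorbed the expectation over $(B,W)$ and over the corresponding jump size, the only randomness left under the outer $\mathbb{E}$ is that of the jump times, matching the right-hand side. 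The case $i=0$ is the Remark following Lemma~\ref{L_Expectation_Operator}.

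The main obstacle is the measurability bookkeeping rather than any analytic estimate: at every step one must choose the conditioning $\sigma$-field large enough that the future-dependent indicator $\boldsymbol{1}_{\{\bar T_{i+1}>1\}}$ and the random time arguments of the operators are measurable, yet structured so that the jump size or Brownian increment being integrated out is still conditionally independent with its correct law. This is precisely where the mutual independence of jump times, jump sizes and Brownian motions built into the compound-Poisson driver $\bar Z$, together with the strong Markov property applied at the stopping times $\bar T_j$, enters; making the tower-property composition of the alternating conditional expectations rigorous is the only delicate part.
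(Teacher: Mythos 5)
Your proposal is correct and follows essentially the same route as the paper's proof: starting from Lemma \ref{L_Expectation_Operator}, then alternating a jump-step conditioning (introducing $\bar S^{j}$ via the conditional independence of the jump size $\Delta\bar Z_{\bar T_{j}}$) with a continuous-step conditioning (introducing $\bar P_{\bar T_{j}-\bar T_{j-1}}$ via Lemma \ref{L_Law_of_X}, freezing the random time lengths, and using that $\bar X$ coincides with $\bar Y$ between jumps), iterated backwards from $j=i$ down to $j=1$. The only cosmetic difference is your choice of slightly larger conditioning $\sigma$-fields ($\sigma(\bar T_{1},\bar T_{2},\dots)\vee\mathcal{F}_{\bar T_{j}-}$ versus the paper's $\sigma(\bar X_{\bar T_{j}-},\bar T_{1},\dots,\bar T_{i+1})$), which changes nothing in substance.
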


\begin{proof}
Define $\bar{\mathcal{G}}^{i,j}:=\mathcal{\sigma}(\bar X_{\bar T_{j}-},\bar
T_{1},...,\bar T_{i+1}),i\in\mathbb{N},j=1,..,i.$ By Lemma
\ref{L_Expectation_Operator} and the definition of the operator $\bar S^{i}$
we have that%
\begin{align*}
&  \mathbb{E}[\boldsymbol{1}_{\{\bar T_{i}<1<\bar T_{i+1} \}}f\left(  \bar
X_{1}\right)  ]\\
&  =\mathbb{E}[\boldsymbol{1}_{\{\bar T_{i}<1<\bar T_{i+1}\}}\bar P_{1-\bar
T_{i}}f(\bar X_{\bar T_{i} })]\\
&  =\mathbb{E}[\boldsymbol{1}_{\{\bar T_{i}<1<\bar T_{i+1}\}}\mathbb{E}[\bar
P_{1-\bar T_{i}}f(\bar X_{\bar T_{i}-}+h(\bar X_{\bar T_{i}-})\Delta\bar
Z_{\bar T_{i} })|\bar{\mathcal{G}}^{i,i}]]\\
&  =\mathbb{E}[\boldsymbol{1}_{\{\bar T_{i}<1<\bar T_{i+1}\}}\bar S^{i}\bar
P_{1-t}f(x)|_{t=\bar T_{i},x=\bar X_{\bar T_{i}-}}]\\
&  =\mathbb{E}[\boldsymbol{1}_{\{\bar T_{i}<1<\bar T_{i+1}\}}\bar S^{i}\bar
P_{1-\bar T_{i}}f(\bar X_{\bar T_{i}-}(\bar T_{i-1},\bar X_{\bar T_{i-1}
}))]\\
&  =\mathbb{E}[\boldsymbol{1}_{\{\bar T_{i}<1<\bar T_{i+1}\}}\bar S^{i}\bar
P_{1-\bar T_{i}}f(\bar Y_{\bar T_{i}}(\bar T_{i-1},\bar X_{\bar T_{i-1} }))].
\end{align*}
Where in the last equality we have used that
\[
\int_{\bar T_{i-1}}^{\bar T_{i}-}\int_{\mathbb{R}}h(\bar X_{s}(\bar
T_{i-1},\bar X_{\bar T_{i-1} }))y\bar N\left(  dy,ds\right)  =0.
\]

Reasoning analogously to the proof of Lemma \ref{L_Expectation_Operator}, one
has that%
\begin{align*}
&  \mathbb{E}\left[ \boldsymbol{1}_{\{\bar T_{i}<1<\bar T_{i+1} \}}\bar
S^{i}\bar P_{1-\bar T_{i}}f(\bar Y_{\bar T_{i}}(\bar T_{i-1},\bar X_{\bar
T_{i-1} }))\right] \\
&  =\mathbb{E}\left[ \boldsymbol{1}_{\{\bar T_{i}<1<\bar T_{i+1}\}}%
\mathbb{E}[\bar S^{i}\bar P_{1-\bar T_{i}}f(\bar Y_{\bar T_{i}}(\bar
T_{i-1},\bar X_{\bar T_{i-1}}))|\bar{\mathcal{H}}^{i,i-1}]\right] \\
&  =\mathbb{E}\left[ \boldsymbol{1}_{\{\bar T_{i}<1<\bar T_{i+1}\}}%
\mathbb{E}[\bar S^{i}\bar P_{1-t_{i}}f(\bar Y_{t_{i}}\left(  t_{i-1},x\right)
)]|_{t_{i}=\bar T_{i},t_{i-1}=\bar T_{i-1},x=\bar X_{\bar T_{i-1}}}\right] \\
&  =\mathbb{E}\left[ \boldsymbol{1}_{\{\bar T_{i}<1<\bar T_{i+1}\}}\bar
P_{\bar T_{i}-\bar T_{i-1}}\bar S^{i}\bar P_{1-\bar T_{i}}f(X_{\bar T_{i-1}%
})]\right] .
\end{align*}
Iterating this procedure the result follows.
\end{proof}

Now we need the following technical result.

\begin{proposition}
\label{Pr_Sum_Series}We have that
\[
\sum_{i=0}^{\infty}\sum_{k=1}^{i+1}\mathbb{E}\left[ \boldsymbol{1}_{\{\bar
T_{i}<1<\bar T_{i+1}\}}\left(  \bar T_{k} \wedge1-\bar T_{k-1}\right)
^{m+1}\right] \leq C\left(  m\right)  \bar\lambda^{-m}.
\]

\end{proposition}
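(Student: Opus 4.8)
The plan is to recognize the double sum as the expected value of the sum of $(m+1)$-th powers of the lengths of the intervals of the jump-adapted partition of $[0,1]$ generated by $\bar Z$, and then to exploit that the inter-jump times of $\bar Z$ are i.i.d.\ exponential with parameter $\bar\lambda$. Since $\bar\lambda<\infty$, the number of jumps $N:=\bar N(\mathbb{R},(0,1])$ in $[0,1]$ is a.s.\ finite and Poisson distributed with mean $\bar\lambda$, so the events $\{\bar T_i<1<\bar T_{i+1}\}$, $i\ge 0$, form a partition of the sample space, and the intervals appearing in the inner sum are exactly those of the partition.

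First I would collapse the double sum into a single sum over $k$. The summand $(\bar T_k\wedge 1-\bar T_{k-1})^{m+1}$ depends only on $k$, and the index $k$ appears in the inner sum precisely for those $i$ with $i\ge k-1$. Since $\bigcup_{i\ge k-1}\{\bar T_i<1<\bar T_{i+1}\}=\{\bar T_{k-1}<1\}$, Tonelli's theorem (all terms are nonnegative) lets me interchange the order of summation to obtain
\[
\sum_{i=0}^{\infty}\sum_{k=1}^{i+1}\mathbb{E}\left[\boldsymbol{1}_{\{\bar T_i<1<\bar T_{i+1}\}}\left(\bar T_k\wedge 1-\bar T_{k-1}\right)^{m+1}\right]=\sum_{k=1}^{\infty}\mathbb{E}\left[\boldsymbol{1}_{\{\bar T_{k-1}<1\}}\left(\bar T_k\wedge 1-\bar T_{k-1}\right)^{m+1}\right].
\]

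Next I would estimate each term by conditioning on $\bar T_{k-1}$. Because $\bar N$ is a Poisson random measure with finite intensity $\bar\nu(dy)\,ds$, its jump times form a rate-$\bar\lambda$ Poisson process, so conditionally on $\bar T_{k-1}=s$ the increment $\tau_k:=\bar T_k-\bar T_{k-1}$ is independent of the past and exponentially distributed with parameter $\bar\lambda$. Using the crude but uniform bound $(\tau_k\wedge(1-s))^{m+1}\le\tau_k^{m+1}$ together with $\mathbb{E}[\tau_k^{m+1}]=(m+1)!\,\bar\lambda^{-(m+1)}$, each term is at most $\tfrac{(m+1)!}{\bar\lambda^{m+1}}\mathbb{P}(\bar T_{k-1}<1)$. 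Summing over $k$ and using $\{\bar T_{k-1}<1\}=\{N\ge k-1\}$ gives $\sum_{k\ge1}\mathbb{P}(\bar T_{k-1}<1)=\sum_{j\ge0}\mathbb{P}(N\ge j)=1+\mathbb{E}[N]=1+\bar\lambda$, so that the left-hand side is bounded by $(m+1)!\,(1+\bar\lambda)\,\bar\lambda^{-(m+1)}$. For $\bar\lambda\ge1$ (the regime of interest, since $\bar\lambda\to\infty$) this is at most $2(m+1)!\,\bar\lambda^{-m}=C(m)\bar\lambda^{-m}$, which is the claim.

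The computation is essentially routine once the right rewriting is found; the only genuinely delicate step is the first one, namely the bookkeeping that identifies the double sum with a sum over partition intervals and legitimizes the interchange of summation through the identity $\bigcup_{i\ge k-1}\{\bar T_i<1<\bar T_{i+1}\}=\{\bar T_{k-1}<1\}$. I would also take care to record the constant's dependence correctly: the bound needs $\bar\lambda$ bounded away from $0$, which is harmless here because the proposition is applied only for large $\bar\lambda$.
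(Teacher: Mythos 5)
Your proof is correct and takes a genuinely different route from the paper's. The paper computes nothing from scratch: it introduces the last-jump-time function $\eta(t)=\sup\{\bar T_{i}:\bar T_{i}\leq t\}$, observes that on $[\bar T_{k-1},\bar T_{k}\wedge 1)$ one has $\eta(t)=\bar T_{k-1}$, so that $\int_{\bar T_{k-1}}^{\bar T_{k}\wedge 1}(t-\eta(t))^{m}\,dt=(\bar T_{k}\wedge 1-\bar T_{k-1})^{m+1}/(m+1)$, and hence the double sum equals $(m+1)\,\mathbb{E}\bigl[\int_{0}^{1}(t-\eta(t))^{m}\,dt\bigr]$, which is then bounded by citing Lemma 11 of \cite{KH-T09}. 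You instead resum over $k$ via the disjointness of the events $\{\bar T_{i}<1<\bar T_{i+1}\}$ and the a.s.\ identity $\bigcup_{i\geq k-1}\{\bar T_{i}<1<\bar T_{i+1}\}=\{\bar T_{k-1}<1\}$, and then exploit the i.i.d.\ $\mathrm{Exp}(\bar\lambda)$ structure of the inter-jump times: independence of $\tau_{k}$ from $\bar T_{k-1}$, the moment $\mathbb{E}[\tau_{k}^{m+1}]=(m+1)!\,\bar\lambda^{-(m+1)}$, and $\sum_{k\geq 1}\mathbb{P}(\bar T_{k-1}<1)=1+\bar\lambda$. Your argument is self-contained (in effect it reproves the content of the cited lemma), at the cost of being slightly longer; the paper's is shorter but outsources the key estimate to an external reference.

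One loose end: your final bound $(m+1)!\,(1+\bar\lambda)\,\bar\lambda^{-(m+1)}$ gives $C(m)\bar\lambda^{-m}$ only for $\bar\lambda\geq 1$, whereas the proposition is stated for arbitrary finite $\bar\lambda$. You flag this, but the restriction is unnecessary: on $\{\bar T_{i}<1<\bar T_{i+1}\}$ the lengths $\bar T_{k}\wedge 1-\bar T_{k-1}$, $k=1,\dots,i+1$, are nonnegative and sum to $1$, so their $(m+1)$-th powers sum to at most $1$; hence the left-hand side is always bounded by $\sum_{i\geq 0}\mathbb{P}(\bar T_{i}<1<\bar T_{i+1})=1\leq\bar\lambda^{-m}$ whenever $\bar\lambda\leq 1$. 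Adding that one line makes your proof cover the full statement with $C(m)=2(m+1)!$.
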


\begin{proof}
From Lemma 11 in \cite{KH-T09}, one has that%
\[
\mathbb{E}\left[ \int_{0}^{1}\left(  t-\eta\left(  t\right)  \right)
^{m}dt\right] \leq C\left(  m\right)  \bar\lambda^{-m},
\]
where $\eta\left(  t\right)  =\sup\{\bar T_{i}:\bar T_{i}\leq t\}$ and
$C\left(  m\right)  $ is a constant that only depends on $m.$ We can write%
\begin{align*}
\mathbb{E}\left[ \int_{0}^{1}\left(  t-\eta\left(  t\right)  \right)  ^{m}dt
\right]   &  =\sum_{i=0}^{\infty}\mathbb{E}\left[ \boldsymbol{1}_{\{\bar T_{i}
<1<\bar T_{i+1}\}}\int_{0}^{1}\left(  t-\eta\left(  t\right)  \right)
^{m}dt\right] \\
&  =\sum_{i=0}^{\infty}\sum_{k=1}^{i+1}\mathbb{E}[\boldsymbol{1}_{\{\bar
T_{i}<1<\bar T_{i+1}\}}\int_{\bar T_{k-1}}^{\bar T_{k}\wedge1}\left(
t-\eta\left(  t\right)  \right)  ^{m}dt],
\end{align*}
and the result follows by integration.
\end{proof}

The main result of this section is the following.

\begin{theorem}
\label{Th_E[f(X1eps)-f(X1epsapprox)]}Let $\{\bar X_{t}\}_{t\in\lbrack0,1]}$ be
the process defined in $\left(  \ref{Equ_X_epsilon}\right)  $ and
$\{\widehat{X}_{t}\}_{t\in\lbrack0,1]}$ a process satisfying assumption
$\left(  \mathcal{SR}\right)  .$ If the operators $\bar P_{t}^{i}:=\bar
S^{i-1}\bar P_{t}$ and $Q_{t}^{i}:=\bar S^{i-1}\widehat{P}_{t}$ associated to
these processes satisfy assumptions $\left(  \mathcal{M}\right)  $ and
$\left(  \mathcal{R}\left(  m,\delta_{m}\right)  \right) $ with $\delta
_{m}(t)=t^{m}$. Then for any $f\in C_{p}^{2\left(  m+1\right)  }$ there exists
a constant $K=K\left(  x,\mathcal{A},p\right)  >0$ such that%
\[
\left\vert \mathbb{E}[f(\bar X_{1})]-\mathbb{E}[f(\widehat{X}_{1})]\right\vert
\leq K\left(  x,\mathcal{A},m\right)  \left\Vert f\right\Vert _{C_{p}%
^{2(m+1)}}\bar\lambda^{-m}%
\]

\end{theorem}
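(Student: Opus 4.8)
The plan is to reduce $\widehat{\mathcal D}_1=\mathbb{E}[f(\bar X_1)]-\mathbb{E}[f(\widehat X_1)]$ to a pathwise comparison of operator products over the random jump-time partition, and then to apply Theorem \ref{Th_Approx_Operators} together with Proposition \ref{Pr_Sum_Series}. First I would decompose the sample space according to the number of jumps of $\bar Z$ before time $1$, writing
\[
\mathbb{E}[f(\bar X_1)]-\mathbb{E}[f(\widehat X_1)]=\sum_{i=0}^{\infty}\Big(\mathbb{E}[\boldsymbol{1}_{\{\bar T_i<1<\bar T_{i+1}\}}f(\bar X_1)]-\mathbb{E}[\boldsymbol{1}_{\{\bar T_i<1<\bar T_{i+1}\}}f(\widehat X_1)]\Big).
\]
This is legitimate because $\bar Z$ is compound Poisson with $\bar\lambda<\infty$, so almost surely only finitely many jumps occur in $[0,1]$ and the events $\{\bar T_i<1<\bar T_{i+1}\}$ partition $\Omega$ up to a null set, while both $f(\bar X_1)$ and $f(\widehat X_1)$ are integrable. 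On $\{\bar T_i<1<\bar T_{i+1}\}$ the jump times induce the partition $\pi=\{0=\bar T_0<\bar T_1<\cdots<\bar T_i<\bar T_{i+1}\wedge 1=1\}$ with $i+1$ subintervals.

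Next I would represent the two terms of each summand through the operator formalism. Proposition \ref{Pr_SR_Xeps_1} (together with its $i=0$ version in the preceding remark) rewrites $\mathbb{E}[\boldsymbol{1}_{\{\bar T_i<1<\bar T_{i+1}\}}f(\bar X_1)]$ as $\mathbb{E}[\boldsymbol{1}_{\{\bar T_i<1<\bar T_{i+1}\}}\prod_{k=1}^{i+1}\bar P^{k}_{\bar T_k\wedge 1-\bar T_{k-1}}f(x)]$ with $\bar P^{k}_t=\bar S^{k-1}\bar P_t$, while assumption $(\mathcal{SR})$ gives the analogous expression for $\widehat X$ with $Q^{k}_t=\bar S^{k-1}\widehat P_t$. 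Conditioning on the jump times, the quantity inside the expectation is, for each fixed realization of $(\bar T_1,\dots,\bar T_i)$, exactly the difference of deterministic operator products to which Theorem \ref{Th_Approx_Operators} applies. With $\delta_m(t)=t^m$ this yields the pathwise bound
\[
\Bigg|\prod_{k=1}^{i+1}\bar P^{k}_{\bar T_k\wedge 1-\bar T_{k-1}}f(x)-\prod_{k=1}^{i+1}Q^{k}_{\bar T_k\wedge 1-\bar T_{k-1}}f(x)\Bigg|\le K\|f\|_{C_p^{2(m+1)}}\sum_{k=1}^{i+1}(\bar T_k\wedge 1-\bar T_{k-1})^{m+1}.
\]

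Finally I would take expectations, sum over $i$ (justified by nonnegativity and monotone convergence), and invoke Proposition \ref{Pr_Sum_Series}:
\[
|\widehat{\mathcal D}_1|\le K\|f\|_{C_p^{2(m+1)}}\sum_{i=0}^{\infty}\sum_{k=1}^{i+1}\mathbb{E}\big[\boldsymbol{1}_{\{\bar T_i<1<\bar T_{i+1}\}}(\bar T_k\wedge 1-\bar T_{k-1})^{m+1}\big]\le K\,C(m)\|f\|_{C_p^{2(m+1)}}\bar\lambda^{-m},
\]
which is the claimed estimate. The delicate point, and where I expect the real work to lie, is the uniformity of the constant $K$ in the pathwise application of Theorem \ref{Th_Approx_Operators}: the number of subintervals $i+1$ is random and unbounded, so the control of the products $\prod_{k=1}^{j}Q^{k}$ coming from Lemma \ref{L_Bounded_Product_Q} must hold uniformly in the number of factors and in the realization of $\pi$. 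This uniformity is precisely what that lemma furnishes, since the total horizon being $1$ keeps $\prod_k\big(1+K(\bar T_k\wedge 1-\bar T_{k-1})\big)$ bounded by $e^{K}$, and it is what makes the termwise estimate summable against the weights supplied by Proposition \ref{Pr_Sum_Series}. One also needs to verify that conditioning on the jump times genuinely decouples the Brownian and jump-size randomness from the jump-time randomness, so that the operator products are deterministic given $(\bar T_k)$; this is guaranteed by the mutual independence of $B$, $W$, the jump sizes and the jump times built into the construction of $\bar X$ and $\widehat X$.
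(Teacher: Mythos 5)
Your proposal is correct and follows essentially the same route as the paper's proof: decomposition over the number of jumps $\{\bar T_i<1<\bar T_{i+1}\}$, representation of both expectations as operator products via Proposition \ref{Pr_SR_Xeps_1} and assumption $(\mathcal{SR})$, termwise application of Theorem \ref{Th_Approx_Operators}, and summation via Proposition \ref{Pr_Sum_Series}. Your added remarks on the uniformity of the constant in the number of factors (via Lemma \ref{L_Bounded_Product_Q}) and on conditioning on the jump times make explicit what the paper leaves implicit, but do not change the argument.
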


\begin{proof}
We can write%
\[
\mathbb{E}[f(\bar X_{1})]-\mathbb{E}[f(\widehat{X}_{1})]=\mathbb{E}\left[
\sum_{i=0}^{\infty}[\boldsymbol{1}_{\{\bar T_{i}<1<\bar T_{i+1}\}}(f(\bar
X_{1})-f(\widehat{X}_{1}))\right] .
\]
By Proposition \ref{Pr_SR_Xeps_1} and assumption $\left(  \mathcal{SR}\right)
$, we have%
\begin{align*}
&  \mathbb{E}\left[ \sum_{i=0}^{\infty}[\boldsymbol{1}_{\{\bar T_{i}<1<\bar
T_{i+1}\}}(f(\bar X_{1})-f(\widehat{X}_{1}))\right] \\
&  =\sum_{i=0}^{\infty}\mathbb{E}\Bigg [\boldsymbol{1}_{\{\bar T_{i}<1<\bar
T_{i+1}\}}(\bar S^{0}\bar P_{\bar T_{1} }\bar S^{1}\bar P_{\bar T_{2}-\bar
T_{1} }\cdots\bar S^{i}\bar P_{1-\bar T_{i}}\\
&  -\bar S^{0}\widehat{P}_{\bar T_{1}}\bar S^{1}\widehat{P}_{\bar T_{2}-\bar
T_{1} }\cdots\bar S^{i}\widehat{P}_{1-\bar T_{i} })f\left(  x\right)
\Bigg ]\\
&  =\sum_{i=0}^{\infty}\mathbb{E}\left[ \boldsymbol{1}_{\{\bar T_{i}<1<\bar
T_{i+1}\}}\left(  \prod_{k=1}^{i+1}\bar P_{\bar T_{k}\wedge1-\bar T_{k-1}}%
^{k}-\prod_{k=1}^{i+1}Q_{\bar T_{k}\wedge1-\bar T_{k-1}}^{k}\right)  f\left(
x\right)  \right]
\end{align*}
Then, by Theorem \ref{Th_Approx_Operators}, we obtain that%
\begin{align*}
&  \left\vert \mathbb{E}[f(\bar X_{1})]-\mathbb{E}[f(\widehat{X}%
_{1})]\right\vert \\
&  \leq\sum_{i=0}^{\infty}\left\vert \mathbb{E}\left[ \boldsymbol{1}_{\{\bar
T_{i}<1<\bar T_{i+1}\}}\left(  \prod_{k=1}^{i+1}\bar P_{\bar T_{k}\wedge1-\bar
T_{k-1}}^{k}-\prod_{k=1}^{i+1}Q_{\bar T_{k}\wedge1-\bar T_{k-1} }^{k}\right)
f\left(  x\right)  \right] \right\vert \\
&  \leq K\left(  x,\mathcal{A},m\right)  \left\Vert f\right\Vert
_{C_{p}^{2(m+1)}}\sum_{i=0}^{\infty}\sum_{k=1}^{i+1}\mathbb{E}\left[
\boldsymbol{1}_{\{\bar T_{i}<1<\bar T_{i+1}\}}\left(  \bar T_{k}\wedge1-\bar
T_{k-1}\right)  \delta_{m}\left(  \bar T_{k}\wedge1-\bar T_{k-1}\right)
\right] ,
\end{align*}
Then the result follows by Proposition \ref{Pr_Sum_Series}$.$
\end{proof}

\section{Optimal approximation of L\'evy measures}

\label{sec:optimal} In this section, we discuss the optimization of the error
bound in Theorem \ref{TheoMain}, i) with respect to the choice of the
approximating L\'evy process $\bar Z$. We would like to choose the parameters
$\bar\mu$ and $\bar\sigma$ and the L\'evy measure $\bar\nu$ in order to make
the first four terms in the expansion small, that is, we concentrate on
\begin{align}
&  C_{1}\left(  x\right)  \left\vert \int_{\left\vert y\right\vert >1}%
y(\nu-\bar\nu)\left(  dy\right)  -\bar\mu\right\vert +C_{2}(x)\left\vert
\int_{\mathbb{R}}y^{2}(\nu-\bar\nu)\left(  dy\right)  -\bar\sigma
^{2}\right\vert \nonumber\\
&  +\sum_{i=3}^{n}C_{i}\left(  x\right)  \left\vert \int_{\mathbb{R}}y^{i}%
(\nu-\bar\nu)\left(  dy\right)  \right\vert +C_{n+1}\left(  x\right)
\int_{\mathbb{R}}\left\vert y\right\vert ^{n+1}\left\vert \nu-\bar
\nu\right\vert \left(  dy\right) .\label{4terms}%
\end{align}
Our approach is to take
\[
\bar\mu=\int_{\left\vert y\right\vert >1}y(\nu-\bar\nu)\left(  dy\right)
\quad\text{and}\quad\bar\sigma=0
\]
so that the expansion becomes
\begin{align*}
\sum_{i=2}^{n}C_{i}\left(  x\right)  \left\vert \int_{\mathbb{R}}y^{i}%
(\nu-\bar\nu)\left(  dy\right)  \right\vert +C_{n+1}\left(  x\right)
\int_{\mathbb{R}}\left\vert y\right\vert ^{n+1}\left\vert \nu-\bar
\nu\right\vert \left(  dy\right) ,
\end{align*}
(see Remark \ref{asro} for an alternative choice of $\bar\sigma$).

Next, we choose the L\'evy measure $\bar\nu$ in the class of measures for
which the first sum is equal to zero and then optimize over $\bar\nu$ in this
class with fixed intensity $\Lambda=\bar\nu(\mathbb{R})<\infty$ in order to
make the last term as small as possible. We will denote by $\mathcal{M}$ the
set of all positive finite measures on $\mathbb{R}$. The problem of finding
the optimal approximating L\'evy measure then takes the following form.

%Recall that $\nu$ is the L\'{e}vy measure of
%the original infinite activity L\'{e}vy process $X$ and we denote $\bar{\nu}$ the
%approximating L\'evy measure with respect to which the optimization
%will be performed.

\begin{problem}
[$\Omega_{n,\Lambda}$]Let $\nu$ be a L\'{e}vy measure on $\mathbb{R}$
admitting the first $n$ moments$,$ where $n\geq2,$ and define $m_{k}%
=\int_{\mathbb{R}}y^{k}{\nu}(dy),1\leq k\leq n$. For any $\bar{\nu}%
\in\mathcal{M}$ define the functional
\[
J\left(  \bar{\nu}\right)  :=\int_{\mathbb{R}}|y|^{n}|\nu-\bar{\nu}|(dy).
\]
The problem $\Omega_{n,\Lambda},n\geq2,$ consists in finding%

\begin{equation}
\mathcal{E}_{n}(\Lambda):=\min_{\bar{\nu}\in\mathcal{M}}J\left(  \bar{\nu
}\right) \label{omega}%
\end{equation}
under the constraints
\begin{equation}
\int_{\mathbb{R}}\bar{\nu}(dy)=\Lambda\quad\text{and}\quad\int_{\mathbb{R}%
}y^{k}\bar{\nu}(dy)=m_{k},\ k=2,\dots,n-1,\label{momcons}%
\end{equation}
where $\Lambda\geq\min_{\bar{\nu}\in M_{n-1}}\bar{\nu}(\mathbb{R})$, where we
set by convention $\min_{\bar{\nu}\in M_{1}}\bar{\nu}(\mathbb{R}) = 0$.
\end{problem}

The computation of $\min_{\bar{\nu}\in M_{n}}\bar{\nu}(\mathbb{R})$ for
$n\geq2$ is a classical problem, known as the Hamburger problem. A summary of
known results on this problem is provided in Appendix A.

\begin{remark}
In explicit examples of Section \ref{sec:explicit}, and in the general
treatment of Section \ref{sec:6.2}, we shall see that for the solutions of
$\Omega_{n,\Lambda}$ that we will find, the term $\int_{\mathbb{R}}%
|y|^{n+p+1}|\nu-\bar{\nu}|(dy)$ appearing in Theorem \ref{TheoMain}, ii) will
always be of a lower order as $\Lambda\rightarrow\infty$ than $\int
_{\mathbb{R}}|y|^{n+1}|\nu-\bar{\nu}|(dy)$. Therefore, the convergence rates
of our schemes will be the same under $(\mathcal{H}_{n+1})$ and under
$(\mathcal{H}_{n+1}^{\prime})$.
\end{remark}

\begin{proposition}
\label{PropExistSolution}The problem $\Omega_{n,\Lambda}$ admits a solution.
\end{proposition}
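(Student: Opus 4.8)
The plan is to establish existence of a minimizer for the functional $J(\bar\nu) = \int_{\mathbb{R}}|y|^n|\nu-\bar\nu|(dy)$ over the admissible set by the direct method of the calculus of variations: I would first argue that the admissible set is nonempty, then extract a minimizing sequence, pass to a weak limit along a subsequence using a compactness argument, and finally verify that the constraints and the objective behave appropriately under this limit (the former are preserved, and $J$ is lower semicontinuous). The constraint $\Lambda \geq \min_{\bar\nu\in M_{n-1}}\bar\nu(\mathbb{R})$ is exactly what guarantees that the feasible set is nonempty: by the theory of the Hamburger moment problem (Appendix A), a positive measure with total mass $\Lambda$ and the prescribed moments $m_2,\dots,m_{n-1}$ exists precisely when $\Lambda$ is at least this minimal mass.

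The key technical point is compactness of the minimizing sequence. First I would show that a minimizing sequence $\{\bar\nu_j\}$ can be taken with uniformly bounded total mass $\Lambda$ and uniformly bounded moments: since the moments up to order $n-1$ are fixed at $m_k$, and since $J(\bar\nu_j)$ is bounded (being a minimizing sequence), the quantity $\int |y|^n \bar\nu_j(dy)$ is controlled via $\int|y|^n\bar\nu_j(dy) \leq \int|y|^n\nu(dy) + J(\bar\nu_j)$, which is finite because $\nu$ admits the $n$-th moment (indeed $\int|y|^n|\nu-\bar\nu_j| \geq |\int|y|^n\nu - \int|y|^n\bar\nu_j|$, giving the bound). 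This uniform moment bound of order $n$ provides tightness: the mass of $\bar\nu_j$ escaping to infinity is controlled by $\int_{|y|>R}\bar\nu_j(dy) \leq R^{-n}\int|y|^n\bar\nu_j(dy)$, uniformly small. Hence by Prohorov's theorem the sequence is relatively compact in the weak topology, and I extract a subsequence converging weakly to some $\bar\nu_\ast$ with total mass $\Lambda$.

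The main obstacle is handling the interaction between weak convergence and the unbounded weight $|y|^n$ in both the objective and the moment constraints. Weak convergence of $\bar\nu_j$ does not by itself preserve integrals of unbounded functions, so I must upgrade tightness to uniform integrability of $|y|^{n-1}$ (and, for the constraints, of $|y|^k$ for $k\le n-1$) against the sequence. The uniform bound on the $n$-th moment supplies exactly this: $\int_{|y|>R}|y|^k\bar\nu_j(dy) \leq R^{k-n}\int|y|^n\bar\nu_j(dy) \to 0$ uniformly in $j$ as $R\to\infty$ for $k < n$, which lets me pass to the limit in the moment constraints \eqref{momcons} and conclude $\int y^k\bar\nu_\ast(dy)=m_k$ for $k=2,\dots,n-1$. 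For the objective itself, the weight $|y|^n$ sits exactly at the critical order, so I cannot expect continuity; instead I would prove lower semicontinuity, $J(\bar\nu_\ast) \leq \liminf_j J(\bar\nu_j)$. This I would obtain by writing $J$ via its dual/variational representation as a supremum over bounded continuous test functions $\varphi$ with $|\varphi(y)|\leq |y|^n$, namely $\int|y|^n|\nu-\bar\nu|(dy) = \sup_\varphi \int \varphi\, d(\nu-\bar\nu)$ truncating the weight to keep test functions bounded, and then using that each such integral is weakly continuous, so that the supremum is lower semicontinuous. Combining, $\bar\nu_\ast$ is feasible and attains the infimum, which is therefore a minimum, proving that $\Omega_{n,\Lambda}$ admits a solution.
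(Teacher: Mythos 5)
Your proposal is correct and follows the same overall strategy as the paper: nonemptiness of the feasible set via the Hamburger moment problem (Appendix A), a minimizing sequence, tightness plus Prokhorov's theorem to extract a weak limit, and weak lower semicontinuity of $J$ obtained by writing it as a supremum of integrals against continuous test functions. The meaningful differences are in the compactness step, and they work in your favor. The paper obtains tightness from the fixed second-moment constraint via Chebyshev, which forces a separate, ad hoc treatment of $n=2$ (where there is no moment constraint and the full feasible set is not tight), and it then asserts that the constraint set $M_n^\Lambda$ is weakly closed, citing Doob --- a claim that is delicate as stated, since the constraints involve unbounded integrands and are not in general preserved under weak convergence: for instance $(\Lambda - j^{-2})\delta_0 + j^{-2}\delta_j$ has second moment identically $1$ but converges weakly to $\Lambda\delta_0$. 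Your argument instead derives a uniform bound on the $n$-th moments along the minimizing sequence from $\int|y|^n\bar\nu_j(dy) \leq \int|y|^n\nu(dy) + J(\bar\nu_j)$, which simultaneously yields tightness for every $n\geq 2$ (no special case needed, and Prokhorov is applied only to the sequence rather than to the whole set) and the uniform integrability $\int_{|y|>R}|y|^k\bar\nu_j(dy)\leq R^{k-n}\sup_j\int|y|^n\bar\nu_j(dy)$ needed to pass the constraints with $k\leq n-1$ to the limit; this is precisely the point that the paper's closedness citation glosses over, so your treatment is the more robust one. One caveat: the nontrivial direction of your dual representation --- that continuous test functions dominated by $|y|^n$ actually achieve the full weighted variation $J(\bar\nu)$ --- is exactly where the paper invests its technical effort (Hahn decomposition, inner regularity of the finite signed measure $|y|^n(\nu-\bar\nu)(dy)$, and Lusin's theorem); your sketch correctly identifies that this identity is what is needed and that each individual functional $\bar\nu\mapsto\int\varphi\,d(\nu-\bar\nu)$ is weakly continuous (note that $|\varphi|\leq|y|^n$ keeps $\int\varphi\,d\nu$ finite despite $\nu(\mathbb{R})=\infty$), but a complete write-up would have to supply that approximation argument.
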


\begin{proof}
By Corollary \ref{exist.cor}, there exist at least one measure satisfying the
constraints \eqref{momcons}. For $n\geq3$, we define by $M^{\Lambda}_{n}$ the
set of all such measures. For $n = 2$, we define by $M^{\Lambda}_{2}$ the set
of all measures $\bar{\nu}\in\mathcal{M}$ satisfying $\bar{\nu} (\mathbb{R}) =
\Lambda$ and $\int_{\mathbb{R}} y^{2} \bar{\nu}(dy) \leq C$, where
\[
C = 2\int_{\mathbb{R}} x^{2} \nu(dx).
\]
It is clear that minimum in \eqref{omega} is the same as the minimum over the
set $M^{\Lambda}_{n}$ for any $n\ge2$.

Define%
\begin{align*}
K_{a} :=\{y\in\mathbb{R}:|y|\leq a\},\quad a>0.
\end{align*}
By Chebyshev's inequality we have that
\[
\bar{\nu}(\mathbb{R}\backslash K_{a})=\int_{\{|y|>a\}}\bar{\nu}\left(
dy\right)  \leq\frac{1}{a^{2}}\int_{\mathbb{R}}y^{2}\bar{\nu}\left(
dy\right)  ,\quad\forall\nu\in M_{n}^{\Lambda},
\]
which yields the tightness of $M_{n}^{\Lambda}.$ By Prokhorov's theorem, we
have that the set $M_{n}^{\Lambda}$ is relatively sequentially compact but, as
$M_{n}^{\Lambda}$ is closed (see e.g., Chapter VII in Doob \cite{Doob}), we
also have that is sequentially compact. The set $\{J(\bar{\nu}):\bar{\nu}%
\in\mathcal{M}_{n}^{\Lambda}\}$ is bounded from below and, hence, it has an
infimum, say $\mathcal{E}_{n}\left(  \Lambda\right)  $. Then, by the basic
properties of the the infimum, we can find a sequence of real numbers of the
form $\{J\left(  \bar{\nu}_{k}\right)  \}_{k\geq1}$ converging to
$\mathcal{E}_{n}(\Lambda).$ As ${M}_{n}^{\Lambda}$ is sequentially compact we
can always find a sequence $\{\bar{\nu}_{k_{l}}\}_{l\geq1}$ that converges
weakly to some $\bar{\nu}^{\ast}\in M_{n}^{\Lambda}.$ But $\{J\left(  \bar
{\nu}_{k_{l}}\right)  \}_{l\geq1},$ being a subsequence of the convergent
sequence $\{J\left(  \bar{\nu}_{k}\right)  \}_{k\geq1},$ must converge to
$\mathcal{E}_{n}(\Lambda).$ Hence, we only need to prove the lower
semicontinuity of the functional $J,$ that is, if $\bar{\nu}_{k}$ converges
weakly to $\bar{\nu}$ then $\lim\inf_{k\rightarrow\infty}J\left(  \bar{\nu
}_{k}\right)  \geq J\left(  \bar{\nu}\right)  .$

Let $\bar{\nu} \in M^{\Lambda}_{n}$. By the Hahn decomposition theorem, there
exist disjoint measurable sets $S^{+}$ and $S^{-}$ such that $S^{+}\cup S^{-}
= \mathbb{R}$, $\nu-\bar{\nu}$ is nonnegative on $S^{+}$ and nonpositive on
$S^{-}$. The functional $J(\bar{\nu})$ can be alternatively written as
\begin{align*}
J(\bar{\nu})  &  = \sup_{f\in L^{\infty}, \|f\|\leq1} \int_{\mathbb{R}}
|y|^{n} f(y) (\nu-\bar{\nu})(dy),\\
&  = \int_{\mathbb{R}} |y|^{n} f^{*}(y) (\nu-\bar{\nu})(dy),\quad
\text{with}\quad f^{*}(y) = 1_{S^{+}} - 1_{S^{-}},
\end{align*}
where $L^{\infty}$ is the space of bounded measurable functions endowed with
the essential supremum norm. This implies that
\begin{align}
J(\bar{\nu}) \geq\sup_{f\in C_{0}, \|f\|\leq1} \int_{\mathbb{R}} |y|^{n} f(y)
(\nu-\bar{\nu})(dy),\label{sup1side}%
\end{align}
where $C_{0}$ is the space of continuous functions with compact support.

Fix $\varepsilon>0$. By the monotone convergence theorem there exists $%
A\in(1,\infty)$ such that 
\begin{equation*}
J(\bar{\nu}) - \int_{-A}^A |y|^n f^*(y) (\nu-\bar{\nu})(dy) \leq \varepsilon.
\end{equation*}
Since the measure $\mu:= |y|^n(\nu-\bar{\nu})$ is a finite measure on $%
\mathbb{R}$, both measures in its Jordan decomposition are also finite and
hence inner regular (see e.g. V.16 in \cite{Doob}). Therefore, we can find
two closed sets $B^+\subseteq S^+ \cap (-A,A) $ and $B^-\subseteq S^- \cap
(-A,A)$ such that $\mu$ is positive on $B^+$, negative on $B^-$ and $\mu(%
\mathbb{R }\setminus (B^+ \cup B^-)) \leq 2\varepsilon$. By Lusin's theorem,
we can find an interpolation between $1_{B^+}$ and $1_{B^-}$. That is, a
function $f\in C_0$ with $\|f\|\leq 1$ such that $f(x) = 1$ for $x\in B^+$, $%
f(x)=-1$ for $x\in B^-$ and $f(x) = 0$ for $x\notin (-A,A)$ with

\begin{equation*}
\mu\left \{x\in\mathbb{R};\ |f-1_{B^-}+1_{B^+}|(x)>\varepsilon\right\}
<\varepsilon.
\end{equation*}
Therefore, finally 
\begin{equation*}
J(\bar{\nu}) - \int_\mathbb{R }|y|^n f(y)(\nu-\bar{\nu})(dy) \leq
\varepsilon + \int_{-A}^A |y|^n (f^*(y)-f(y))(\bar{\nu}-\nu)(dy) \leq
3\varepsilon,
\end{equation*}
which, together with \eqref{sup1side} means that 
\begin{equation*}
J(\bar{\nu}) = \sup_{f\in C_0, \|f\|\leq 1} \int_{\mathbb{R}} |y|^n f(y)
(\nu-\bar{\nu})(dy),
\end{equation*}
because the choice of $\varepsilon$ was arbitrary.

For a sequence $(\bar{\nu}_k)$ which converges weakly to $\bar{\nu}$, we
have, for every $f\in C_0$ with $\|f\|\leq 1$:
\begin{align*}
\int_{\mathbb{R}} |y|^n f(y) (\nu-\bar{\nu})(dy) &= \liminf_k \int_{\mathbb{R%
}} |y|^n f(y)(\nu - \bar{\nu}_k) (dy) \\
&\leq \liminf_k \sup_{f\in C_0, \|f\|\leq 1} \int_{\mathbb{R}} |y|^n
f(y)(\nu - \bar{\nu}_k) (dy) \\
& = \liminf_k J(\bar{\nu}_k).
\end{align*}
Now, taking the $sup$ with respect to $f$ in the left-hand side, we obtain
the desired result.
\end{proof}

The following result provides a characterization of the solutions of $\Omega
_{n,\Lambda }$, which will be useful in finding explicit representations for
small $n$.

\begin{proposition}
\label{PropCharacSolution}The measure $\bar{\nu}$ is a solution of $\left( %
\ref{omega}\right) $ if and only if it satisfies the constraints $\left( \ref%
{momcons}\right) $, and there exists a piecewise polynomial function $%
P(y)=a_{0}+\sum_{i=2}^{n-1}a_{i}y^{i}+|y|^{n}$ such that $P(y)\geq0$ for all 
$y\in\mathbb{R}$, a function $\alpha:\mathbb{R}\mapsto\lbrack0,1]$ and a
positive measure $\tau$ on $\mathbb{R}$ such that 
\begin{equation}
\bar{\nu}(dy)=\nu(dy)\boldsymbol{1}_{\{P(y)<2|y|^{n}\}}+\alpha(y)\nu(dy)%
\boldsymbol{1}_{\{P(y)=2|y|^{n}\}}+(\tau(dy)+\nu(dy))\boldsymbol{1}%
_{\{P(y)=0\}}.  \label{optnu}
\end{equation}
\end{proposition}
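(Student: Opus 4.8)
The plan is to recognise $\Omega_{n,\Lambda}$ as a convex program and to read \eqref{optnu} off its first-order (Karush--Kuhn--Tucker) conditions. Indeed $J$ is convex in $\bar\nu$, being the weighted total variation $\int|y|^{n}\,d|\nu-\bar\nu|$ precomposed with the affine map $\bar\nu\mapsto\nu-\bar\nu$, and the feasible set cut out by \eqref{momcons} is the intersection of the cone of positive measures with finitely many affine hyperplanes. The Lagrange multipliers attached to the $n-1$ constraints in \eqref{momcons} will be precisely the coefficients $a_{0},a_{2},\dots,a_{n-1}$, which, together with the leading term $|y|^{n}$ supplied by the objective, assemble into $P$. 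Setting $Q(y):=|y|^{n}-P(y)=-a_{0}-\sum_{i=2}^{n-1}a_{i}y^{i}$, the pivotal (and elementary) remark is that $Q$ lies in the span $V:=\mathrm{span}\{1,y^{2},\dots,y^{n-1}\}$ of the constrained moment functions, so that $\int_{\mathbb{R}}Q\,d\bar\nu=-a_{0}\Lambda-\sum_{i=2}^{n-1}a_{i}m_{i}$ takes one and the same value for every $\bar\nu$ satisfying \eqref{momcons}.

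For the sufficiency (``if'') direction I would argue directly, without invoking duality. Let $\bar\nu$ be of the form \eqref{optnu} for some admissible $P\ge0$ and satisfying \eqref{momcons}, and let $\tilde\nu$ be any competitor satisfying \eqref{momcons}. Because $\int Q\,d\tilde\nu=\int Q\,d\bar\nu$, it suffices to show that $\bar\nu$ minimises $\Phi(\bar\nu):=J(\bar\nu)-\int_{\mathbb{R}}Q\,d\bar\nu$ over all positive measures. Writing the Lebesgue decomposition $\bar\nu=g\,\nu+\bar\nu_{s}$ with $g\ge0$ and $\bar\nu_{s}\perp\nu$, one has
\begin{align*}
\Phi(\bar\nu)=\int_{\mathbb{R}}\bigl[\,|y|^{n}|1-g(y)|-Q(y)g(y)\,\bigr]\nu(dy)+\int_{\mathbb{R}}\bigl[\,|y|^{n}-Q(y)\,\bigr]\bar\nu_{s}(dy).
\end{align*}
The singular integrand equals $P\ge0$, so it is minimised by charging $\bar\nu_{s}$ only on $\{P=0\}$; and for fixed $y$ the scalar convex function $g\mapsto|y|^{n}|1-g|-Q(y)g$ is minimised over $g\ge0$ at $g=1$ when $0<P(y)<2|y|^{n}$, at any $g\in[0,1]$ when $P(y)=2|y|^{n}$, at any $g\ge1$ when $P(y)=0$, and at $g=0$ when $P(y)>2|y|^{n}$. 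These four regimes are exactly the four pieces of \eqref{optnu} (the freedom $g\ge1$ together with the free singular charge on $\{P=0\}$ being encoded by $\tau$, and the freedom $g\in[0,1]$ on $\{P=2|y|^{n}\}$ by $\alpha$). Hence $\bar\nu$ attains the pointwise minimum of $\Phi$, so $\Phi(\bar\nu)\le\Phi(\tilde\nu)$ and therefore $J(\bar\nu)\le J(\tilde\nu)$.

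For the necessity (``only if'') direction I would start from a minimiser $\bar\nu^{\ast}$, which exists by Proposition \ref{PropExistSolution}, and manufacture the polynomial $P$ as a Lagrange multiplier. Concretely, the optimality condition $0\in\partial J(\bar\nu^{\ast})+N_{C}(\bar\nu^{\ast})$, where $C$ is the feasible set, asserts the existence of a subgradient of $J$ at $\bar\nu^{\ast}$ that lies, modulo the normal cone of the positivity constraint, in the span $V$ of the constraint functionals; this subgradient is the desired $Q$, and $P:=|y|^{n}-Q$. The subdifferential of the weighted total variation forces $|Q(y)|\le|y|^{n}$, i.e. $P\ge0$, while its sign structure---$Q=|y|^{n}$ (so $P=0$) wherever $\bar\nu^{\ast}$ exceeds $\nu$, $Q=-|y|^{n}$ (so $P=2|y|^{n}$) wherever $\bar\nu^{\ast}$ is a strict fraction of $\nu$, and $Q\le-|y|^{n}$ (so $P\ge 2|y|^{n}$) where $\nu$ is discarded---is precisely the complementary slackness encoded in \eqref{optnu}.

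The main obstacle is the rigorous production of this multiplier in infinite dimensions: one must establish strong duality, equivalently a constraint qualification guaranteeing that the subgradient can be taken in the finite-dimensional span $V$. I would obtain it from the convexity of the value function got by perturbing the right-hand sides $(\Lambda,m_{2},\dots,m_{n-1})$ of \eqref{momcons}, taking $(a_{0},a_{2},\dots,a_{n-1})$ to be a subgradient of it at the true data. The delicate points are that $\nu(\mathbb{R})=\infty$, which forces one to work in function spaces weighted by $|y|^{n}$ and to check that the integrals $\int Q\,d\bar\nu$ are finite, and the boundary case $\Lambda=\min_{\bar\nu\in M_{n-1}}\bar\nu(\mathbb{R})$, where the feasible set may fail to have interior and the subgradient must be extracted by an approximation or a direct Hahn--Banach separation in the space of finite signed measures paired with polynomially bounded continuous functions. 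Once the multiplier is in hand, the measurable selection of $\alpha$ on $\{P=2|y|^{n}\}$ and the identification of $\tau$ with the excess part of $\bar\nu^{\ast}$ on $\{P=0\}$ are routine.
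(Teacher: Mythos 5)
Your proposal rests on the same mechanism as the paper's proof --- $P$ is the Lagrange-multiplier polynomial attached to the constraints \eqref{momcons}, and the four pieces of \eqref{optnu} arise from minimizing the Lagrangian pointwise --- but the execution differs in ways worth recording. The paper \emph{asserts}, without proof or citation, that optimality is equivalent to the existence of multipliers $(p_{0},p_{2},\dots,p_{n})$ making $\bar{\nu}^{\ast}$ a minimizer of $\mathcal{L}(\cdot,p)$ with $\mathcal{L}(\bar{\nu}^{\ast},p)>-\infty$; it then shows $P\geq 0$ by sending $a\to\infty$ along the test family $a\delta_{y_{0}}$, and minimizes $\mathcal{L}=\int P\,d\mu^{+}+\int(2|y|^{n}-P)\,d\mu^{-}+C$ over the Jordan decomposition $\bar{\nu}-\nu=\mu^{+}-\mu^{-}$. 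Your sufficiency argument is more self-contained and in fact more rigorous: you use only that $\int Q\,d\bar{\nu}$ is constant on the feasible set, then minimize pointwise in the Lebesgue-decomposition coordinate $g$; your four scalar regimes are exactly the paper's complementary-slackness bullet points. Moreover, your grouping of terms keeps the integrals well defined, whereas the paper's additive ``constant'' $C=\int(P-|y|^{n})\,d\nu$ equals $+\infty$ whenever $P(0)>0$ (recall $\nu(\mathbb{R})=\infty$, and the explicit solutions for $n$ even do have $P(0)>0$), so the paper's rearrangement is formal and your version is how one would repair it. On necessity the two are on equal footing: the existence of the multiplier is precisely the unproved half of the paper's asserted equivalence, and you correctly isolate it as the crux (value-function subgradient, constraint qualification, the boundary case of $\Lambda$ where the feasible set has empty interior) without closing it. In short: your sufficiency half is complete and cleaner than the paper's; your necessity half carries the same gap as the paper's, but at least makes it explicit.
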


\begin{remark}
\label{Remark_NOAtom}If the measure $\nu$ is absolutely continuous with
respect to Lebesgue's measure, the expression $\left( \ref{optnu}\right) $
simplifies to 
\begin{equation*}
\bar{\nu}(dy)=\nu(dy)\boldsymbol{1}_{\{P(y)<2|y|^{n}\}}+\tau(dy)\boldsymbol{1%
}_{\{P(y)=0\}}.
\end{equation*}
Moreover, in the case $n=2q,q\in\mathbb{N},$ $P\left( y\right) $ is a
polynomial and the measure $\tau$ may always be taken to be an atomic
measure with at most $q$ atoms (because a positive polynomial of degree $%
n=2q $ has at most $q$ distinct roots).
\end{remark}

\begin{proof}
A measure $\bar{\nu}^{\ast}$ which satisfies the constraints $\left( \ref%
{momcons}\right) $ is a solution of $\left( \ref{omega}\right) $ if and only
if there exists a vector of Lagrange multipliers $(p_{0},p_{2},\dots,p_{n})$
such that $\bar{\nu}^{\ast}$ minimizes the Lagrangian $\mathcal{L}(\bar{\nu}%
,p)$ over all measures $\bar{\nu}\in\mathcal{M}$, and $\mathcal{L}(\bar{\nu}%
^{\ast},p)>-\infty$. The Lagrangian for this problem takes the form
(dropping the terms which do not depend on $\bar{\nu}$): 
\begin{equation*}
\mathcal{L}(\bar{\nu},p)=\int_{\mathbb{R}}|y|^{n}|\bar{\nu}-\nu|(dy)+\int_{%
\mathbb{R}}\bar{\nu}(dy)(p_{0}+\sum_{i=2}^{n-1}p_{i}y^{i})
\end{equation*}
Set $P(y)=p_{0}+\sum_{i=2}^{n-1}p_{i}y^{i}+|y|^{n}$. Let $y_{0}\in$ be such
that $P\left( y_{0}\right) <0,$ and consider the family of measures $\bar{\nu%
}_{a}\left( dy\right) =a\delta_{y_{0}},$ where $a>0.$ Then, for any $%
(p_0,...,p_n)$, 
\begin{equation*}
\mathcal{L}(\bar{\nu}_{a},p)=\int_{\mathbb{R}\backslash\{y_{0}\}}|y|^{n}\bar{%
\nu} _{0}(dy)+|a-a_{0}||y_{0}|^{n}+a\left(
p_{0}+\sum_{i=2}^{n-1}p_{i}y_{0}^{i}\right) ,
\end{equation*}
where $a_{0} = \nu(\{y_{0}\})$. For $a>a_{0},$ we have that%
\begin{equation*}
\mathcal{L}(\bar{\nu}_{a},p)=\int_{\mathbb{R}\backslash\{y_{0}\}}|y|^{n}\bar{%
\nu} _{0}(dy)-a_{0}|y_{0}|^{n}+aP\left( y_{0}\right) \underset{a\rightarrow
+\infty}{\rightarrow}-\infty.
\end{equation*}
Therefore, necessarily $P(y)\geq0$ for all $y\in\mathbb{R}$. Now, as before,
let the Jordan decomposition of $\bar{\nu}-\nu$ be given by $\bar{\nu}%
-\nu=\mu^{+}-\mu^{-}$, where $\mu^+$ and $\mu^-$ are supported on disjoint
measurable sets. Then, 
\begin{equation*}
\mathcal{L}(\bar{\nu},p)=\int_{\mathbb{R}}P(y)\mu^{+}(dy)+\int_{\mathbb{R}%
}(2|y|^{n}-P(y))\mu^{-}(dy)+C,
\end{equation*}
where $C$ denotes the terms which do not depend on $\mu^{+}$ and $\mu^{-}$.
Then, it is clear that at optimum,

\begin{itemize}
\item $\mu^{+}$ should be equal to a measure with support $\{y:P(y)=0\}$.
Therefore in general, there will be no uniqueness.

\item $\mu^{-}\equiv0$ on $\{y:2|y|^{n}-P(y)>0\}$.

\item $\mu^{-}\equiv\nu_{0}$ on $\{y:2|y|^{n}-P(y)<0\}$. This follows
because $\mu^+$ and $\mu^-$ are supported on disjoint measurable sets and $%
\mu^-\le \nu$.

\item $\mu^{-}$ satisfies $\nu-\mu^{-}\geq0$ on $\{y:2|y|^{n}-P(y)=0\}$.
\end{itemize}

Combining these observations, we complete the proof.
\end{proof}

\begin{example}
Let $n=2q,q\in\mathbb{N},$ and $\nu$ be absolutely continuous. To find an
optimal measure for the problem $\Omega_{n,\Lambda}$ we can use the
following procedure. Use the following parametrization for $P\left( y\right) 
$ and $\tau\left( dy\right) :$ 
\begin{align*}
P\left( y\right) & =\left( y-a_{1}\right) ^{2}\cdots\left( y-a_{q}\right)
^{2}, \\
\tau\left( dy\right) & =\sum_{i=1}^{q}\alpha_{i}\delta_{a_{i}}.
\end{align*}
\ Solve the following system of $n$ nonlinear equations for $%
\{a_{i}\}_{i=1}^{q}$ and $\{\alpha_{i}\}_{i=1}^{q}:$%
\begin{align*}
\sum_{j=1}^{q}a_{j}\prod \limits_{i\neq j}^{q}a_{i}^{2} & =0, \\
\int_{\{\left( y-a_{1}\right) ^{2}\cdots\left( y-a_{q}\right)
^{2}>2y^{2q}\}}\nu\left( dy\right) +\sum_{i=1}^{q}\alpha_{i} & =\Lambda, \\
\int_{\{\left( y-a_{1}\right) ^{2}\cdots\left( y-a_{q}\right)
^{2}>2y^{2q}\}}y^{k}\nu\left( dy\right) &
=\sum_{i=1}^{q}\alpha_{i}a_{i}^{k},\quad k=2,...,2q-1.
\end{align*}
Obviously, in general, the solution to this system can only be approximated
numerically and this does not seem an easy task. For $n\leq4$, the solutions
are quite explicit; they are discussed in the following section.
\end{example}

To complete the analysis we need to quantify the dependence of the optimal
value of the error $\mathcal{E}_{n}\left( \Lambda \right) $ on $\Lambda $
when $\Lambda $ tends to infinity. This is achieved in the following section
for small values of $n$ and in Section \ref{sec:6.2} for general $n$, under
a regularity assumption on the L\'{e}vy measure.

\subsection{Explicit examples for small values of $n$}

\label{sec:explicit} Throughout this section we assume that the measure $\nu$
is absolutely continuous with respect to the Lebesgue measure.

\paragraph{The case $n=2$.}

We use the characterization of Proposition \ref{PropCharacSolution} (see
also Remark \ref{Remark_NOAtom}). The function $P(y)$ is necessarily of the
form $P(y) = a_0 + y^2$ for some $a_0\ge 0$ (otherwise the infimum of the
Lagrangian would be $-\infty$), and therefore the optimal solution is given
by%
\begin{equation*}
\bar{\nu}_{\varepsilon}\left( dy\right) =\boldsymbol{1}_{\{y^{2}>\varepsilon%
\}}\nu\left( dy\right) ,
\end{equation*}
where $\varepsilon = \varepsilon(\Lambda)$ solves $\nu(\{y^{2}>\varepsilon%
\})=\Lambda.$ The approximation error $\mathcal{E}_2(\Lambda)$ is given by 
\begin{equation*}
\mathcal{E}_2(\Lambda) = J(\bar{\nu}_{\varepsilon(\Lambda)})=\int_{y^2 \leq
\varepsilon(\Lambda)} y^2\nu(dy),
\end{equation*}
which can go to zero at an arbitrarily slow rate as $\Lambda \to \infty$.

\paragraph{The case $n=3$.}

The function $P(y)$ is now of the form $P(y) = a_0 + a_2 y^2 + |y|^3$, and
the positivity constraint implies that $P(y)$ is necessarily of the form 
\begin{align*}
P(y) = (y+\varepsilon)(y-2\varepsilon)^2,\quad y\geq 0 \\
P(y) =- (y + 2\varepsilon)^2 (y-\varepsilon), \quad y<0,
\end{align*}
or, in other words, $P(y) = |y|^3 - 3\varepsilon y^2 + 4 \varepsilon^3$, for
some $\varepsilon>0$. It is now easy to see that an optimal solution is
given by 
\begin{equation*}
\bar{\nu}_{\varepsilon}\left( dy\right) =\boldsymbol{1}_{\{|y|>\varepsilon%
\}}\nu\left( dy\right)
+\alpha_{1}\delta_{-2\varepsilon}+\alpha_{2}\delta_{2\varepsilon},
\end{equation*}
where $\varepsilon = \varepsilon(\Lambda)$ solves 
\begin{equation*}
\int_{\{|y|>\varepsilon\}}\nu\left( dy\right) +\frac{1}{4\varepsilon ^{2}}%
\int_{\{|y|\leq\varepsilon\}}y^{2}\nu\left( dy\right) =\Lambda,
\end{equation*}
and%
\begin{equation*}
\alpha_{1}+\alpha_{2}=\frac{1}{4\varepsilon^{2}}\int_{\{|y|\leq\varepsilon
\}}y^{2}\nu\left( dy\right) .
\end{equation*}
The approximation error $\mathcal{E}_3(\Lambda)$ satisfies $\mathcal{E}%
_3(\Lambda) = o(\Lambda^{-1/2})$ as $\Lambda \to \infty$, since 
\begin{equation*}
\mathcal{E}_3(\Lambda) = \int_{|y|\leq \varepsilon(\Lambda)} |y|^3 \nu(dx) +
2 \varepsilon(\Lambda) \int_{|y|\leq \varepsilon(\Lambda)} y^2 \nu(dx)\leq 3
\varepsilon(\Lambda) \int_{|y|\leq \varepsilon(\Lambda)} y^2 \nu(dx) =
o(\varepsilon(\Lambda))
\end{equation*}
and 
\begin{equation*}
\lim_{\Lambda \to \infty} \varepsilon(\Lambda)^2 \Lambda = \lim_{\varepsilon
\downarrow 0} \varepsilon^2 \int_{|y|>\varepsilon}\nu(dy) +
\lim_{\varepsilon \downarrow 0} \frac{1}{4}\int_{|y|\leq \varepsilon} y^2
\nu(dx) \le \lim_{c\downarrow 0}\int_{|y|\le c}y^2\nu (dy)= 0.
\end{equation*}
However, the scheme with $n=4$ achieves a better rate with the same
computational cost.

\paragraph{The case $n=4$.}

The function $P(y)$ is now of the form $P\left( y\right)
=a_{0}+a_{2}y^{2}+a_{3}y^{3}+y^{4}$ and from the positivity constraint we
then deduce that 
\begin{align*}
P\left( y\right) = (y-\varepsilon)^2 (y+ \varepsilon)^2 = y^4 -
2y^2\varepsilon^2 + \varepsilon^4
\end{align*}
for some $\varepsilon>0$. Analyzing the function $2y^{4}-P\left( y\right)
=y^{4}-\varepsilon^{4}+2\varepsilon^{2}y^{2}$ it is easy to check that $%
\{2y^{4}-P\left( y\right) >0\}=\{|y|>\varepsilon\sqrt{\sqrt{2}-1}\}.$ Hence,
the optimal solution is of the form 
\begin{equation*}
\bar{\nu}_{\varepsilon}\left( dy\right) =\nu(dy)\boldsymbol{1}%
_{\{|y|>\varepsilon \sqrt{\sqrt{2}-1}\}}+\alpha_{1}\delta_{-\varepsilon}+%
\alpha_{2}\delta_{\varepsilon},
\end{equation*}
where the constants $\alpha_1$ and $\alpha_2$ are determined from the moment
constraints and satisfy 
\begin{align*}
\alpha_{1} & =\frac{1}{2\varepsilon^{3}}\left( -\int_{\{|y|\leq \varepsilon%
\sqrt {\sqrt{2}-1}\}}y^{3}\nu\left( dy\right) +\varepsilon \int_{\{|y|\leq
\varepsilon\sqrt{\sqrt{2}-1}\}}y^{2}\nu\left( dy\right) \right) , \\
\alpha_{2} & =\frac{1}{2\varepsilon^{3}}\left( \int_{\{|y|\leq \varepsilon%
\sqrt{\sqrt {2}-1}\}}y^{3}\nu\left( dy\right) +\varepsilon\int _{\{|y|\leq
\varepsilon\sqrt{\sqrt{2}-1}\}}y^{2}\nu\left( dy\right) \right) ,
\end{align*}
and $\varepsilon = \varepsilon(\Lambda)$ is found from the intensity
constraint $F\left( \varepsilon\right) =\Lambda,$ where%
\begin{equation*}
F\left( \varepsilon\right) =\int_{\{|y|>\varepsilon\sqrt{\sqrt{2}-1}%
\}}\nu\left( dy\right) +\frac{1}{\varepsilon^{2}}\int_{\{|y|\leq \varepsilon%
\sqrt {\sqrt{2}-1}\}}y^{2}\nu\left( dy\right) .
\end{equation*}
Note that $F$ is strictly decreasing, continuous, and satisfies $%
\lim_{\varepsilon\downarrow 0} F\left( \varepsilon\right) = +\infty $ and $%
\lim_{\varepsilon\uparrow +\infty} F\left( \varepsilon\right) = 0 $, which
ensures the existence of a unique solution for $F\left( \varepsilon\right)
=\Lambda.$ Also note that 
\begin{align*}
\left\vert \int_{\{|y|\leq \varepsilon\sqrt{\sqrt{2}-1}\}}y^{3}\nu\left(
dy\right) \right\vert & \leq\varepsilon\sqrt{\sqrt{2}-1}\int_{\{|y|\leq
\varepsilon\sqrt{\sqrt{2}-1}\}}y^{2}\nu\left( dy\right) \\
& \leq\varepsilon\int_{\{|y|\leq\varepsilon\sqrt{\sqrt{2}-1}%
\}}y^{2}\nu\left( dy\right) ,
\end{align*}
which ensures the non negativity of $\alpha_{1},\alpha_{2}.$

The worst case convergence rate can be estimated similarly to the case $n=3$
and satisfies $\mathcal{E}_4(\Lambda) = o(\Lambda^{-1})$ as $\Lambda \to
\infty$. As we shall see in the next section, in the presence of a more
detailed information about the explosion of the L\'evy measure at zero, this
convergence rate can be refined.

%\begin{remark}
%In the regularly varying case, one can replace the exact solution of
%the intensity equation $F(\varepsilon) = \Lambda$ with an approximate
%solution $\varepsilon = U^{-1}(\Lambda/C) $ or $\varepsilon = \tilde
%U^{-1}(\Lambda/C) $ with $\tilde U(\varepsilon)\sim U(\varepsilon)$ as
%$\varepsilon \downarrow 0$. 
%\end{remark}
\begin{remark}
${}$ \label{asro}

\begin{enumerate}
\item The calculations of this section make it clear that as far as weak
approximations are concerned, the Asmussen-Rosinski approach of
approximating the small jumps of a L\'evy process with a Brownian motion is
not necessarily the only answer. In fact, the case $n=3$ studied above leads
to an approximation which is asymptotically equivalent to the
Asmussen-Rosinski method and the case $n=4$ leads to a scheme which
converges at a faster rate, for the same computational cost.

\item Instead of taking $\bar{\sigma}=0$, one may choose $\bar{\sigma}$
which makes the second term in \eqref{4terms} equal to zero, which leads,
for $n\geq 3$, to the following optimization problem for $\bar{\nu}$: 
\begin{equation*}
\mathcal{E}_{n}^{\prime }(\Lambda ):=\min_{\bar{\nu}\in \mathcal{M}}J\left( 
\bar{\nu}\right)
\end{equation*}%
under the constraints 
\begin{equation*}
\int_{\mathbb{R}}\bar{\nu}(dy)=\Lambda \quad \text{and}\quad \int_{\mathbb{R}%
}y^{k}\bar{\nu}(dy)=m_{k},\ k=3,\dots ,n-1.
\end{equation*}%
This problem assumes the use of the Asmussen-Rosinski approach to match the
second moment of $\nu $. The analysis of this problem can be carried out
using the same tools described above and leads to similar results.
\end{enumerate}
\end{remark}

\subsection{Convergence rates for regularly varying L\'{e}vy measures}

\label{sec:6.2} The notion of regular variation provides a convenient tool
to study the convergence of our moment matching schemes even in the cases
when $n$ is large and an explicit solution of $(\Omega_{n,\Lambda})$ is not
available. We refer to \cite{bgt.87} for background on regular variation.

As usual, we denote by $R_\alpha$ the class of regularly varying functions
with index $\alpha$ (at zero or at infinity depending on the context). The
following assumption, which is satisfied by many parametric L\'evy models
used in practice (stable, tempered stable/CGMY, normal inverse Gaussian,
generalized hyperbolic etc.) may be used to quantify the rate of explosion
of the L\'evy measure near zero.

\begin{assumption}
There exists $\alpha \in (0,2)$, positive constants $c_+$ and $c_-$ with $%
c_++c_->0$ and a function $g\in R_{-\alpha}$ (at zero) such that the L\'evy
measure ${\nu}$ satisfies 
\begin{equation*}
{\nu}((x,\infty)) \sim c_+ g(x)\quad \text{and} \quad {\nu}((-\infty,-x))
\sim c_- g(x)\quad \text{as}\ x\downarrow 0, \eqno{(R_\alpha)}.
\end{equation*}
\end{assumption}

\begin{theorem}
\label{TheoMainRegVar}Let $n$ be even and let the L\'{e}vy measure $\nu $
satisfy the assumption $(R_{\alpha })$. Then there exists a function $%
f(\Lambda )$ with $f\in R_{1-n/\alpha }$ as $\Lambda \rightarrow \infty $
such that the error bound $\mathcal{E}_{n}(\Lambda )$ defined by %
\eqref{omega} satisfies 
\begin{equation*}
\underline{c}f(\Lambda )\leq \mathcal{E}_{n}(\Lambda )\leq \overline{c}%
f(\Lambda )
\end{equation*}%
for all $\Lambda $ sufficiently large, and for some constants $\underline{c},%
\overline{c}$ with $0<\underline{c}\leq \overline{c}<\infty $. The function $%
f$ is given explicitly by $f(\Lambda )=(g^{\leftarrow }(\Lambda
))^{n}\Lambda $, where $g^{\leftarrow }$ is a generalized inverse of the
function $g$ appearing in Assumption $(R_{\alpha })$.
\end{theorem}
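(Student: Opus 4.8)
The plan is to establish the two-sided estimate by proving the lower and upper bounds separately, treating the lower bound first because it turns out to be insensitive to the moment-matching constraints. Throughout I would set $\varepsilon=\varepsilon(\Lambda):=g^{\leftarrow}(\Lambda)$, so that $g(\varepsilon)\sim\Lambda$, and reduce everything to showing that both $\mathcal{E}_n(\Lambda)$ and $f(\Lambda)$ are of order $\varepsilon^n\Lambda$. The regular-variation bookkeeping I would record once and not revisit: since $g\in R_{-\alpha}$ at zero, its generalized inverse satisfies $g^{\leftarrow}\in R_{-1/\alpha}$ at infinity, $g(g^{\leftarrow}(\Lambda))\sim\Lambda$, and hence $f(\Lambda)=(g^{\leftarrow}(\Lambda))^n\Lambda\in R_{1-n/\alpha}$, as claimed.

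For the lower bound I would discard the moment constraints and keep only $\bar\nu(\mathbb{R})=\Lambda$. For any admissible $\bar\nu$ and any level $\delta>0$, using $|y|^n\ge0$, the total-variation inequality $|\nu-\bar\nu|\ge\nu-\bar\nu$, the bound $|y|^n\le\delta^n$ on $\{|y|\le\delta\}$, and $\bar\nu(\mathbb{R})=\Lambda$,
\[
J(\bar\nu)\ge\int_{|y|\le\delta}|y|^n\,d|\nu-\bar\nu|\ge\int_{|y|\le\delta}|y|^n\,d(\nu-\bar\nu)\ge\int_{|y|\le\delta}|y|^n\nu(dy)-\delta^n\Lambda.
\]
Karamata's theorem applied to the regularly varying tail $\nu(|y|>\cdot)\sim(c_++c_-)g$ gives $\int_{|y|\le\delta}|y|^n\nu(dy)\sim\frac{\alpha(c_++c_-)}{n-\alpha}\,\delta^n g(\delta)$ as $\delta\downarrow0$, where $n>\alpha$ is used. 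Choosing $\delta=\delta(\Lambda)$ through $g(\delta)=\beta\Lambda$ for a fixed $\beta$ large enough that the positive term dominates, and using $g^{\leftarrow}\in R_{-1/\alpha}$ to get $\delta\sim\mathrm{const}\cdot\varepsilon$, yields $J(\bar\nu)\ge\underline c\,\varepsilon^n\Lambda$ for all large $\Lambda$, uniformly in $\bar\nu$; optimizing over $\beta$ fixes $\underline c>0$. Taking the infimum over $\bar\nu$ gives $\mathcal{E}_n(\Lambda)\ge\underline c f(\Lambda)$.

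For the upper bound it suffices to exhibit one admissible $\bar\nu$ with $J(\bar\nu)\le\overline c f(\Lambda)$, and here the evenness $n=2q$ together with the structure from Proposition \ref{PropCharacSolution} and Remark \ref{Remark_NOAtom} guides the choice: I would take $\bar\nu=\mathbf{1}_{\{|y|>c\varepsilon\}}\nu+\sum_{i=1}^{q}\alpha_i\delta_{a_i}$, keeping the large jumps and using $q$ atoms in $(-c\varepsilon,c\varepsilon)$ to restore the intensity $\Lambda$ and the moments $m_2,\dots,m_{n-1}$. The decisive step is the rescaling $y=\varepsilon z$: by $(R_\alpha)$ the normalized measures $\nu(\varepsilon\,\cdot)/g(\varepsilon)$ converge vaguely on $\mathbb{R}\setminus\{0\}$ to $\nu_\alpha(dz)=\alpha(c_+z^{-\alpha-1}\mathbf{1}_{z>0}+c_-|z|^{-\alpha-1}\mathbf{1}_{z<0})dz$, so the truncated moments $\mu_k:=\int_{|y|\le c\varepsilon}y^k\nu(dy)$ satisfy $\mu_k\sim\varepsilon^k g(\varepsilon)\,\hat m_k$ with $\hat m_k=\int_{|z|\le c}z^k\nu_\alpha(dz)$ finite for $k>\alpha$. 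Writing $a_i=\varepsilon\hat a_i$ and $\alpha_i=g(\varepsilon)\hat\alpha_i$, matching the constraints reduces to a fixed truncated (Hamburger) moment problem on $[-c,c]$ with target mass $\hat s_0=1-(c_++c_-)c^{-\alpha}>0$ (taking $c$ large) and moments $\hat m_2,\dots,\hat m_{n-1}$; this limiting problem is strictly feasible and, by the classical theory recalled in Appendix A, solvable with at most $q$ atoms. Unscaling, the atomic part contributes $\sum_i|a_i|^n\alpha_i=\varepsilon^n g(\varepsilon)\sum_i|\hat a_i|^n\hat\alpha_i=O(\varepsilon^n\Lambda)$, and the removed small jumps contribute $\int_{|y|\le c\varepsilon}|y|^n\nu(dy)=O(\varepsilon^n g(\varepsilon))=O(\varepsilon^n\Lambda)$ by Karamata; since $\bar\nu\equiv\nu$ on $\{|y|>c\varepsilon\}$ these are the only contributions to $J(\bar\nu)$, giving $J(\bar\nu)\le\overline c f(\Lambda)$.

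The hard part is the uniform (in $\Lambda$) solvability of the moment-matching step: one must guarantee a nonnegative atomic solution with the correct $\varepsilon$-scaling for \emph{all} large $\Lambda$, not merely in the limit $\varepsilon\to0$. The clean way around this is exactly the rescaling above, which converts a family of moment problems into a small perturbation of a single, strictly feasible limiting problem; the strict feasibility — an interior-point, positive-definiteness condition on the limiting moment matrix, available because $\hat s_0$ can be pushed close to the full mass by enlarging $c$ — is what propagates nonnegativity of the weights and the order of the locations back to finite $\Lambda$ by a continuity argument. A secondary, routine point, handled by the standard properties of generalized inverses of regularly varying functions, is the asymptotic $g(g^{\leftarrow}(\Lambda))\sim\Lambda$ and the identification of the index $1-n/\alpha$ of $f$.
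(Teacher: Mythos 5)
Your lower bound is correct, and it is in fact a cleaner route than the paper's: where the paper relaxes to the intensity-only constraint and then invokes the explicit optimal solution of the relaxed problem (via Proposition \ref{PropCharacSolution}), you bound $J(\bar\nu)$ from below \emph{uniformly} over all $\bar\nu$ with $\bar\nu(\mathbb{R})=\Lambda$ by truncating at a level $\delta$ with $g(\delta)=\beta\Lambda$ and applying Karamata; this argument is complete and needs no structural information about the minimizer. Your upper-bound blueprint also coincides with the paper's (keep jumps above a threshold of order $g^{\leftarrow}(\Lambda)$, repair mass and moments with a finite-intensity correction, control its cost), but it contains a genuine gap at exactly the step you yourself flag as the hard part.

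The gap is the claim that the rescaled limiting moment problem --- mass $\hat s_0=1-(c_++c_-)c^{-\alpha}$, moments $\hat m_2,\dots,\hat m_{n-1}$ of $\nu_\alpha$ on $[-c,c]$ --- is \emph{strictly feasible} for large $c$. Your justification, that ``$\hat s_0$ can be pushed close to the full mass by enlarging $c$'', does not suffice: feasibility requires comparing $\hat s_0$ with the minimal mass $\hat m_0^*(c)$ compatible with those moments, and the moments themselves blow up as $c\to\infty$ (like $c^{k-\alpha}$), so a priori $\hat m_0^*(c)$ could exceed $1$ or even diverge. One must actually prove $\hat m_0^*(c)\to 0$, e.g.\ by dilation invariance (rescaling the support to $[-1,1]$ multiplies $\hat m_k$ by $c^{-k}$ and leaves the mass unchanged, giving $\hat m_0^*(c)=C c^{-\alpha}$) or by a Hankel-determinant bound. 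This is precisely where the paper does its technical work: it bounds the minimal intensity $\bar\Lambda_\varepsilon$ of the correcting measure by the ratio $\bigl|\det(\{m^\varepsilon_{i+j}\mathbf{1}_{i+j>1}\}_{i,j=0}^q)\bigr|/\det(\{m^\varepsilon_{i+j}\}_{i,j=1}^q)$, computes the entrywise limits by Karamata, and shows the limiting matrix $\{\tfrac{\alpha}{i+j-\alpha}\}_{i,j=1}^q$ is positive definite via the representation $\langle z,Az\rangle=\alpha\int_0^1 x^{-\alpha-1}(\sum_i z_i x^i)^2dx$, yielding $\bar\Lambda_\varepsilon\le C\,\nu(\{|y|>\varepsilon\})$. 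A second, related hole: your error estimate needs the $n$-th moment of the atomic correction to be $O(\varepsilon^n g(\varepsilon))$, which your sketch gets only from the unproven continuity claim that the atoms stay in $[-c\varepsilon,c\varepsilon]$ with weights $O(g(\varepsilon))$. The paper sidesteps both issues at once by matching moments up to order $n$ (not $n-1$): then the correction's $n$-th moment is pinned to $\int_{|y|\le\varepsilon}y^n\nu(dy)$, the error is automatically $2\int_{|y|\le\varepsilon}y^n\nu(dy)$, no localization of atoms is needed, and the only thing left to control is the minimal intensity. If you adopt that device and supply the scaling (or determinant) computation of $\hat m_0^*(c)$, your rescaling argument becomes a complete proof, essentially identical to the paper's.
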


\begin{remark}
${}$

\begin{enumerate}
\item The regular variation implies that as $\Lambda \to \infty$, the error
goes to zero as $\Lambda^{1-\frac{n}{\alpha}}$ times a slowly varying factor
(such as logarithm). To compute the explicit convergence rate, the exact
form of the regularly varying function $g$ must be known. For example, if $%
g(x) = x^{-\alpha}$ then 
\begin{equation*}
f(\Lambda) \sim C \Lambda^{1-\frac{n}{\alpha}}
\end{equation*}
for some strictly positive constant $C$.

\item In the case $n=4$ it can be shown using similar methods that $\mathcal{%
E}_n(\Lambda) \sim Cf(\Lambda)$ for some strictly positive constant $C$.
\end{enumerate}
\end{remark}

\begin{proof}
Throughout the proof, we let $q = \frac{n}{2}$. To obtain an upper bound on
the error, we construct a, possibly suboptimal, measure satisfying the
constraints which attains the desired rate. Let $\varepsilon>0$, and define 
\begin{align}
{\nu}_{\varepsilon}(dy)=\nu(dy)\boldsymbol{1}_{\{|y|>\varepsilon\}}+ {\bar{%
\nu}}_{\varepsilon}(dy),  \label{subopt}
\end{align}
where ${\bar{\nu}}_{\varepsilon}(dy)$ is the solution (minimizer) of the
moment problem 
\begin{equation*}
\bar{\Lambda}_{\varepsilon}:=\min\{\bar{\nu}(\mathbb{R}):\bar{\nu}\in%
\mathcal{M},\int_{\mathbb{R}}y^{k}\bar{\nu}(dy)=m_{k}^{\varepsilon},k=2,%
\dots,n\},
\end{equation*}
where we define $m_{k}^{\varepsilon}:=\int_{\{|y|\leq\varepsilon\}}y^{k}%
\nu(dy)$. Then, 
\begin{align}
\mathcal{E}_{n}(\Lambda_\varepsilon) & \leq J\left( {\nu}_{\varepsilon}%
\right) :=\int_{\mathbb{R}}y^{n}|\nu-{\nu}_{\varepsilon}|(dy)  \notag \\
& \leq\int_{\{|y|\leq\varepsilon\}}y^{n}\nu(dy)+\int_{\mathbb{R}}y^{n}{\bar{%
\nu}}_{\varepsilon}(dy)=2\int_{\{|y|\leq\varepsilon\}}y^{n}{\nu} (dy),
\label{EquWorstCase}
\end{align}
where 
\begin{equation*}
\Lambda_{\varepsilon}:=\bar{\nu}_{\varepsilon}(\mathbb{R})=\int_{\{|y|>%
\varepsilon \}}\nu(dy)+\bar{\Lambda}_{\varepsilon}.
\end{equation*}
By Proposition \ref{momprobeven}, 
\begin{equation*}
\bar{\Lambda}_{\varepsilon}=\inf\{m_{0}^{\varepsilon}:\{m_{i+j}^{\varepsilon
}\}_{i,j=0}^{q}\geq0\quad\text{for some}\quad m_{1}^{\varepsilon}\}.
\end{equation*}
On the other hand, the matrix $\{m_{i+j}^{\varepsilon}\}_{i,j=1}^{q}$ is
(nonnegative) positive definite, because it is a moment matrix of a measure.
Therefore, by Sylvester's criterion we can write 
\begin{equation*}
\bar{\Lambda}_{\varepsilon}=\inf\{m_{0}^{\varepsilon}:\det(\{m_{i+j}^{%
\varepsilon}\}_{i,j=0}^{q})\geq0\quad\text{for some}\quad
m_{1}^{\varepsilon}\}
\end{equation*}
and also 
\begin{equation*}
\bar{\Lambda}_{\varepsilon}\leq\inf\{m_{0}^{\varepsilon}:\det(\{m_{i+j}^{%
\varepsilon}\boldsymbol{1}_{i+j\neq1}\}_{i,j=0}^{q})\geq0\}
\end{equation*}
But 
\begin{equation*}
\det(\{m_{i+j}^{\varepsilon}\boldsymbol{1}_{i+j\neq1}\}_{i,j=0}^{q})=m_{0}^{%
\varepsilon}\det(\{m_{i+j}^{\varepsilon}\}_{i,j=1}^{q})+\det
(\{m_{i+j}^{\varepsilon}\boldsymbol{1}_{i+j>1}\}_{i,j=0}^{q})
\end{equation*}
and therefore 
\begin{equation*}
\bar{\Lambda}_{\varepsilon}\leq\frac{\left\vert \det(\{m_{i+j}^{\varepsilon }%
\boldsymbol{1}_{i+j> 1}\}_{i,j=0}^{q})\right\vert }{\det(\{m_{i+j}^{%
\varepsilon}\}_{i,j=1}^{q})}
\end{equation*}
By integration by parts and Karamata's theorem (Theorem 1.5.11 in \cite%
{bgt.87}), we show that 
\begin{align}
\lim_{\varepsilon \downarrow 0} \frac { \int_{(0,\varepsilon]} |y|^{p}{\nu}%
(dy)}{\varepsilon^p \int_{(\varepsilon,\infty)} \nu(dz)} = \frac{\alpha}{%
p-\alpha},\quad \text{for all $p>\alpha$}.  \label{karamata}
\end{align}
and so 
\begin{equation*}
\limsup_{\varepsilon\downarrow 0}\frac{\bar{\Lambda}_{\varepsilon}}{%
\int_{|z|>\varepsilon} \nu(dz)}\leq\frac{\left\vert \det(\{\frac{\alpha}{%
i+j-\alpha}\boldsymbol{1}_{i+j>1}\}_{i,j=0}^{q})\right\vert }{\det(\{\frac{%
\alpha}{i+j-\alpha}\}_{i,j=1}^{q})}.
\end{equation*}
The matrix $(A_{ij})_{i,j=1}^n = (\frac{\alpha}{i+j-\alpha})_{i,j=1}^q$ is
positive definite because 
\begin{equation*}
\langle z,A z\rangle = \int_0^1 x^{-\alpha-1} \Big(\sum_{i=1}^q z_i x^i\Big)%
^2 dx.
\end{equation*}
Therefore, $\det A >0$ and there exits a constant $C<\infty$ such that 
\begin{equation*}
\bar{\Lambda}_{\varepsilon}\leq C \int_{|z|>\varepsilon} \nu(dz)
\end{equation*}
for $\varepsilon$ sufficiently small.

To sum up, we have found that there exist two positive constants $C_1$ and $%
C_2$ such that for $\varepsilon$ sufficiently small, 
\begin{align}
&\mathcal{E}_{n}(\Lambda_{\varepsilon}) \leq
2\int_{\{|y|\leq\varepsilon\}}y^{n}\nu (dy) \leq C_1 \varepsilon^n
\int_{|y|>\varepsilon} \nu(dy)  \label{assinv} \\
& \Lambda_\varepsilon = \int_{|y|>\varepsilon} \nu(dy) + \bar
\Lambda_\varepsilon \leq C_2 \int_{|y|>\varepsilon} \nu(dy).  \notag
\end{align}
Let $\Lambda(\varepsilon):=\int_{|y|>\varepsilon} \nu(dy)$ and $%
\varepsilon(\Lambda):= \inf \{\varepsilon: \Lambda(\varepsilon) < \Lambda\}$%
. This function satisfies $\Lambda(\varepsilon(\Lambda)) \leq \Lambda$, and
since $\Lambda(\varepsilon) \in R_{-\alpha}, $ as $\varepsilon\downarrow 0$,
by Theorem 1.5.12 in \cite{bgt.87}, we also get that $\varepsilon(\Lambda)
\in R_{-1/\alpha}$ as $\Lambda \to \infty$.

Now, for a given $\Lambda$, consider the measure \eqref{subopt} with $%
\varepsilon = \varepsilon(\Lambda/C_2)$, and possibly an additional atom at $%
0$ to satisfy the intensity constraint. This measure satisfies the
constraints of Problem $(\Omega_{n,\Lambda})$ and, by \eqref{assinv}, has
error bounded by 
\begin{equation*}
C_1 \varepsilon^n(\Lambda/C_2) \frac{\Lambda}{C_2} \sim C_1 C_2^{n/\alpha-1}
\Lambda\varepsilon^n(\Lambda),
\end{equation*}
so that the upper bound of the theorem holds with $f(\Lambda) =
\Lambda\varepsilon^n(\Lambda) \in R_{1-n/\alpha}$.

To compute the lower bound, observe that 
\begin{equation*}
\mathcal{E}_{n}(\Lambda )\geq \min_{\hat{\nu}\in \mathcal{M},\,\hat{\nu}(%
\mathbb{R})=\Lambda }J\left( \hat{\nu}\right) ,
\end{equation*}%
and the explicit optimal solution for the problem in the right-hand side is
given by 
\begin{equation*}
\nu _{\varepsilon }(dy)=\nu (dy)\boldsymbol{1}_{|y|>\varepsilon }+\xi \nu
(dy)\boldsymbol{1}_{|y|=\varepsilon },
\end{equation*}%
where $\varepsilon $ and $\xi \in \lbrack 0,1]$ are such that $%
\int_{|y|>\varepsilon }\nu (dy)+\xi \nu (\{|y|=\varepsilon \})=\Lambda $ (cf
Proposition \ref{PropCharacSolution}), which means that in particular $%
\varepsilon =\varepsilon (\Lambda )$ introduced above. On the other hand,
the error functional associated to this solution satisfies 
\begin{equation*}
J(\nu _{\varepsilon })=\int_{\mathbb{R}}|y|^{n}|\nu -\nu _{\varepsilon
}|(dy)\geq \int_{|y|<\varepsilon }|y|^{n}\nu (dy)\sim \frac{\alpha }{%
n-\alpha }\Lambda \varepsilon ^{n}(\Lambda ),
\end{equation*}%
which proves the lower bound.
\end{proof}

\section{Description of the algorithm and numerical results}
\label{numerics.sec}
According to Section \ref{sec:optimal}, our approach to find an optimal
approximation for the L\'{e}vy measure starts by setting $\bar{\mu}%
=\int_{|y|>1}y(\nu -\bar{\nu})(dy)$ and $\bar{\sigma}=0.$ Hence, the
solution of equation $\left( \ref{Equ_X_epsilon}\right) $ between jumps
satisfies the following equation%
\begin{equation}
\bar{Y}_{t}(x)=x+\int_{0}^{t}\bar{b}(\bar{Y}_{s}(x))ds+\int_{0}^{t}\sigma (%
\bar{Y}_{s}(x))dB_{s},  \label{Equ_YBetweenJumps}
\end{equation}%
where%
\begin{eqnarray*}
\bar{b}(x) &=&b(x)+\bar{\gamma}h(x), \\
\bar{\gamma} &=&\int_{\{|y|>1\}}y\nu (dy)-\int_{\{|y|>1\}}y\bar{\nu}(dy).
\end{eqnarray*}%
This implies that the drift term of the continuous part will depend on $\bar{%
\nu}$ through the parameter $\bar{\gamma}.$ Therefore, once we have fixed $%
\bar{\nu}$ the optimal approximation of the L\'{e}vy measure $\nu ,$ we need
to choose a weak approximation method to solve equation $\left( \ref%
{Equ_YBetweenJumps}\right) .$ We will consider the following approaches:

\begin{itemize}
\item \textbf{Weak Taylor approximations}: These methods are based on the It%
\^{o}-Taylor expansion of the solution of $(\ref{Equ_YBetweenJumps}).$ This
expansion is the stochastic analogue of the classical Taylor expansion,
where the role of polynomials is played by multiple iterated stochastic
integrals. Truncating the expansion at a certain degree of the iterated
integrals we obtain an approximation method with global order of convergence
related to that degree, see Proposition 5.11.1 in \cite{kloeden}. We will
consider the weak Taylor approximations with global order of convergence 1,2
and 3, which we will denote by WT1, WT2 and WT3. Although the method is
conceptually simple to understand, it presents some difficulties in the
implementation as we need to sample from the joint law of multiple
stochastic integrals of different orders. This makes the method less
appealing from a practical point of view, especially when the driving
Brownian motion is multi-dimensional.

\item \textbf{Kusuoka-Lyons-Victoir methods}: These methods are also based
on stochastic Taylor expansions. The idea is to approximate the expectation
under the Wiener measure by the expectation under a probability measure
supported on a finite number of paths of finite variation. By construction,
the expectations of the iterated Stratonovich integrals, up to a certain
degree, under this new measure match the expectations of the corresponding
iterated integrals under the Wiener measure. Using the Stratonovich-Taylor
formula one can deduce that the approximations obtained have global order of
convergence depending on the degree of the iterated integrals taken into
account, see \cite{LiVi04}. In particular we will consider the approximation
schemes of degree 3 and 5, denoted by KLV3 and KLV5, which give,
respectively, global order of convergence 1 and 2. Deriving and implementing
these methods is not straightforward, see \cite{gyurko.lyons.10} for an
account on these issues.

\item \textbf{Ninomiya-Victoir method}: The Ninomiya-Victoir method can be
seen as a stochastic splitting method. The idea is to find suitable small
time approximations of the semigroup associated to the solution of equation $%
\left( \ref{Equ_YBetweenJumps}\right) .$ These approximations are written in
terms of weighted products (compositions) of simpler semigroups associated
to the so called coordinate processes and are deduced using formal Taylor
expansions of the semigroups involved. The main difference with respect to
the classical splitting methods is that, in the stochastic case, we need to
find appropriate stochastic representations of the semigroups in order to
implement the Monte Carlo method. These representations involve solving or
approximating ODEs with random coefficients. We will consider the algorithm
given by Ninomiya and Victoir in \cite{NV}, which has global order of
convergence 2.
\end{itemize}

Having fixed an optimal L\'{e}vy measure and a weak approximation scheme for
the continuous part we can apply the following algorithm to obtain a sample
of $\bar{X}_{1}.$

\rule{325pt}{1pt}

\textbf{Algorithm to generate a weak approximation of }$\bar{X}_{1}\vspace*{%
-0.25cm}$

\rule{325pt}{1pt}

\begin{quotation}
\textbf{Requires}:

\qquad The initial condition $x.$

\qquad The optimal L\'{e}vy measure $\bar{\nu}.$

\qquad The weak approximation method $\bar{Y}_{t}^{WA}\left( y\right) ,$ to
solve $\bar{Y}_{t}\left( y\right) ,t\in (0,1],y\in \mathbb{R}^{d}$

\textbf{Compute} $\bar{\lambda}=\bar{\nu}(\mathbb{R})$ and $\bar{\gamma}%
=\int_{\{|y|>1\}}y\nu (dy)-\int_{\{|y|>1\}}y\bar{\nu}(dy)$

\textbf{Set} $T_{last}=0,x_{new}=x_{0}$

\textbf{Simulate} the next jump time $T\sim \mathrm{Exp}(\bar{\lambda})$

\textbf{While} $(T<1-T_{last})$ \textbf{do}

\textbf{\{}

\qquad \textbf{Compute} $\bar{Y}_{T}^{WA}(x_{new})$

\qquad \textbf{Simulate} $\Delta ,$ a jump from the Poisson random measure

\qquad with L\'{e}vy measure $\bar{\nu}$

\qquad \textbf{Set} $x_{new}=\bar{Y}_{T}^{WA}(x_{new})+h(\bar{Y}%
_{T}^{WA}(x_{new}))\Delta $

\qquad \textbf{Set} $T_{last}=T$

\qquad \textbf{Simulate} the next jump time $T\sim \mathrm{Exp}(\bar{\lambda}%
)$

\textbf{\}}

\textbf{Compute} $\bar{Y}_{1-T_{last}}^{WA}(x_{new})$

\textbf{Set} $\bar{X}_{1}^{WA}=\bar{Y}_{1-T_{last}}^{WA}(x_{new})$

\textbf{Return} $\bar{X}_{1}^{WA}\vspace*{-0.3cm}$
\end{quotation}

\rule{325pt}{1pt}

Applying, independently, the previous algorithm $M$ times we obtain a
sequence $\{\bar{X}_{1}^{WA,i}\}_{i=1,...,M}$ and the Monte Carlo estimator
of $\mathbb{E}[f(X_{1})]$ is given by 
\begin{equation*}
\frac{1}{M}\sum_{i=1}^{M}f(\bar{X}_{1}^{WA,i}).
\end{equation*}

We end this section with some numerical examples. We evaluate $\mathbb{%
E}[f(X_{1})]$, where $X$ is the solution of equation \eqref{Equ_X_intro}
with $b(x)\equiv \gamma_0 h(x)$ and $\sigma (x)\equiv \sigma _{0}h(x)$. To
approximate the L\'{e}vy process, we use the optimal schemes presented in
section \ref{sec:explicit} with $n=2$, $n=3$ and $n=4$, and denoted,
respectively, by OA2, OA3 and OA4 in the examples below. For solving the
continuous SDE between the times of jumps, we use the schemes WT1, WT2, WT3,
KLV3, KLV5 and NV mentioned above. Finally, the process $Z$ is taken to be a
CGMY process, which is a L\'{e}vy process with no diffusion component and L%
\'{e}vy density of the form 
\begin{equation*}
\nu (x)=C\frac{e^{-\lambda _{-}|x|}1_{x<0}+e^{-\lambda _{+}|x|}1_{x>0}}{%
|x|^{1+\alpha }}.
\end{equation*}%
The third component of the characteristic triplet is chosen in such way that 
$Z$ becomes a martingale. An algorithm for simulating the increments of $Z$
is available \cite{poirot.tankov.06}, which makes it possible to compare our
methods to the traditional Euler scheme. Also, this process satisfies the
assumption $(R_{\alpha })$ of the previous section, and allows us to
illustrate the dependence of the convergence rates on the parameter $\alpha $%
. Actually, combining Theorems \ref{TheoMain}\ and \ref{TheoMainRegVar} we
have the following result.

\begin{theorem}
Assume the hypotheses in Theorems \ref{TheoMain}, ii)\ and \ref%
{TheoMainRegVar}, and choose $\bar{\sigma}^{2}=0$ and $\bar{\mu}%
=\int_{\left\vert y\right\vert >1}y(\nu -\bar{\nu})\left( dy\right) .$ Then,
for $n$ even, we have that there exist positive constants $K(x,\mathcal{A}%
,m),C(x)$ and a slowly varying function $l$ such that 
\begin{eqnarray*}
&&|\mathbb{E}[f\left( X_{1}\right) ]-\mathbb{E}[f(\widehat{X}_{1})]| \\
&\leq &C\left( x\right) \left\Vert f\right\Vert _{C_{p}^{n+1}}l(\Lambda
)\Lambda ^{1-\frac{n}{\alpha }}+K\left( x,\mathcal{A},m\right) \left\Vert
f\right\Vert _{C_{p}^{2(m+1)}}\Lambda ^{-m},
\end{eqnarray*}%
where $\Lambda =\bar{\nu}(\mathbb{R}).$
\end{theorem}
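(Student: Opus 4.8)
The plan is to specialize the general estimate of Theorem~\ref{TheoMain}, ii) to the optimal approximating measure and then substitute the regular-variation bound of Theorem~\ref{TheoMainRegVar}. Concretely, I would take $\bar\nu$ to be a solution of the moment problem \eqref{omega}, which exists by Proposition~\ref{PropExistSolution}, so that $\bar\lambda=\bar\nu(\mathbb{R})=\Lambda$ and the constraints \eqref{momcons} hold. With the prescribed choices $\bar\sigma^{2}=0$ and $\bar\mu=\int_{|y|>1}y(\nu-\bar\nu)(dy)$, I would pass through the right-hand side of Theorem~\ref{TheoMain}, ii) term by term. The first term vanishes by the choice of $\bar\mu$; the choice $\bar\sigma^{2}=0$ together with the constraints \eqref{momcons}, which match all moments of orders $2,\dots,n-1$, annihilates every moment term up to order $n-1$. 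Hence the only contributions that do not vanish outright are the moment term of order exactly $n$, the two absolute-moment terms of orders $n+1$ and $n+p+1$, and the scheme error $K\|f\|_{C_{p}^{2(m+1)}}\bar\lambda^{-m}$.

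The key observation is that the surviving order-$n$ term is controlled directly by the optimized functional: since $\bar\nu$ minimizes $J$,
\[
\left|\int_{\mathbb{R}}y^{n}(\nu-\bar\nu)(dy)\right|\leq\int_{\mathbb{R}}|y|^{n}|\nu-\bar\nu|(dy)=J(\bar\nu)=\mathcal{E}_{n}(\Lambda).
\]
It then remains to show that the two higher absolute-moment terms are of strictly smaller order than $\mathcal{E}_{n}(\Lambda)$ as $\Lambda\to\infty$, so that they can be absorbed into the constant. For this I would invoke the explicit structure of the optimizer from Proposition~\ref{PropCharacSolution}: $\nu-\bar\nu$ is supported in a shrinking neighbourhood of the origin, where $P(y)\geq 2|y|^{n}$, together with finitely many atoms located at the roots of $P$, all of size of order $\varepsilon(\Lambda)\to 0$. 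Consequently each extra power of $|y|$ contributes a factor of order $\varepsilon(\Lambda)$, so that $\int_{\mathbb{R}}|y|^{n+1}|\nu-\bar\nu|(dy)=O(\varepsilon(\Lambda)\,\mathcal{E}_{n}(\Lambda))=o(\mathcal{E}_{n}(\Lambda))$, and likewise for the order $n+p+1$ term; quantitatively this is exactly the comparison recorded in the Remark following Proposition~\ref{PropExistSolution} and follows from the Karamata asymptotics \eqref{karamata}.

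Combining the surviving pieces gives
\[
|\mathbb{E}[f(X_{1})]-\mathbb{E}[f(\widehat X_{1})]|\leq C(x)\|f\|_{C_{p}^{n+1}}\,\mathcal{E}_{n}(\Lambda)+K(x,\mathcal{A},m)\|f\|_{C_{p}^{2(m+1)}}\Lambda^{-m},
\]
using $\bar\lambda=\Lambda$. Finally, since $n$ is even and $\nu$ satisfies $(R_{\alpha})$, Theorem~\ref{TheoMainRegVar} bounds $\mathcal{E}_{n}(\Lambda)$ above by a regularly varying function of index $1-n/\alpha$, which by the representation theorem for regularly varying functions I would write as $l(\Lambda)\Lambda^{1-n/\alpha}$ for a slowly varying $l$. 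Substituting this yields the stated inequality.

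The step I expect to be most delicate is the order comparison of the $(n+1)$- and $(n+p+1)$-moment terms against $\mathcal{E}_{n}(\Lambda)$: this cannot be deduced from the moment constraints alone and genuinely requires the support description of the optimizer from Proposition~\ref{PropCharacSolution} combined with the regular-variation estimates. One must also verify, via $(\mathcal{H}_{n+1}^{\prime})$, that all of these moments are finite in the first place, so that Theorem~\ref{TheoMain}, ii) is applicable; the even-$n$ hypothesis is needed precisely so that Theorem~\ref{TheoMainRegVar} supplies the matching rate for $\mathcal{E}_{n}(\Lambda)$.
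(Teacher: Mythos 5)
Your overall strategy coincides with the paper's intended argument (the paper's own ``proof'' is the single sentence preceding the statement: combine Theorem \ref{TheoMain}, ii) with Theorem \ref{TheoMainRegVar}), and most of your steps are sound: the choices of $\bar{\mu}$ and $\bar{\sigma}$ together with the constraints \eqref{momcons} kill the signed moment terms up to order $n-1$, the surviving order-$n$ term is at most $J(\bar{\nu})=\mathcal{E}_{n}(\Lambda)$ for the optimizer, $\bar{\lambda}=\bar{\nu}(\mathbb{R})=\Lambda$ yields the $\Lambda^{-m}$ term, and writing the $R_{1-n/\alpha}$ upper bound of Theorem \ref{TheoMainRegVar} as $l(\Lambda)\Lambda^{1-n/\alpha}$ with $l$ slowly varying is just the definition of regular variation.

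The gap is precisely in the step you flag as delicate. You claim that Proposition \ref{PropCharacSolution} shows that, for the exact optimizer, $\nu-\bar{\nu}$ is supported in a neighbourhood of the origin of size $O(\varepsilon(\Lambda))$ with atoms at distance $O(\varepsilon(\Lambda))$. The proposition contains no such quantitative information: it only gives the structural form \eqref{optnu}, and neither the diameter of the region $\{P(y)\geq2|y|^{n}\}$ nor the location of the roots of $P$ is controlled as $\Lambda\rightarrow\infty$; the paper verifies this localization only for the explicit solutions with $n\leq4$ in Section \ref{sec:explicit}, whereas the theorem is stated for all even $n$. The Remark following Proposition \ref{PropExistSolution}, which you also invoke, records a different comparison --- the $(n+p+1)$-term against the $(n+1)$-term, and only ``for the solutions we will find'' --- so it cannot be used to bound either term by $\mathcal{E}_{n}(\Lambda)$. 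A cleaner route, closer to what the paper has actually established, is to apply Theorem \ref{TheoMain}, ii) not with the exact optimizer but with the near-optimal measure \eqref{subopt} from the proof of Theorem \ref{TheoMainRegVar}: that measure matches the moments of orders $2,\dots,n$, so \emph{every} signed moment term vanishes, and $|\nu-\bar{\nu}|\leq\nu\boldsymbol{1}_{\{|y|\leq\varepsilon\}}+\bar{\nu}_{\varepsilon}$; the contribution of $\nu\boldsymbol{1}_{\{|y|\leq\varepsilon\}}$ to the $(n+1)$- and $(n+p+1)$-terms is at most $\varepsilon\int_{\{|y|\leq\varepsilon\}}|y|^{n}\nu(dy)$, respectively $\varepsilon^{p+1}\int_{\{|y|\leq\varepsilon\}}|y|^{n}\nu(dy)$, hence $o(f(\Lambda))$ by Karamata \eqref{karamata}. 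But even on that route one must still prove that the minimal Hamburger measure $\bar{\nu}_{\varepsilon}$ can be chosen with support in $[-C\varepsilon,C\varepsilon]$ (for instance by a scaling argument on the rescaled moment problem), since its moments of order larger than $n$ are otherwise completely unconstrained by the construction. That is the piece of work genuinely missing from your proposal --- and, to be fair, it is glossed over by the paper as well.
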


We use $10^{6}$ simulation paths in all examples. For the Euler scheme, all
values are computed using the same set of paths with $%
1,2,4,8,16,32,64,128$ and $256$ discretization intervals. For the
optimal schemes, different paths are used for each point on the graph, and
the different points are obtained by choosing the values of the
parameter $\varepsilon$ which correspond to the values of
$\lambda_\varepsilon:= \int_{|x|>\varepsilon} \nu(dx)$ in the range $[0.5, 1, 2, 4, 8, 16, 32]$. Also, the computing time for each point has been normalized by the
standard deviation of the MC estimate, so that the times for all points correspond
to the time required to get a standard deviation of 0.001.
The variance of the MC estimate is about the same for
all values computed with the optimal schemes. For the Euler scheme, the
variance may be different, because, on one hand, the simulation method from 
\cite{poirot.tankov.06} makes use of a probability change which increases
variance, and on the other hand, we use a variance reduction techique for
the Euler scheme (by taking $\mathbb{E}[f(x+h(x)Z_{1})]$ as control variate)
but not for the other schemes. In all the numerical examples below we take $%
\gamma_0 =0.5$, $\sigma _{0}=0.3$, $\lambda _{+}=3.5$ and $\lambda _{-}=2$.
Furthermore, for data set I, we take $C=0.5$ and $\alpha =0.5$ (finite
variation jumps) and for data set II we take $C=0.1$ and $\alpha =1.5$
(infinite variation jumps). These two choices yield approximately the same
variance of $X_{1}$ and allow us to understand the effect of $\alpha $ on
the convergence rate.

For our first example, we take $h(x)=x$ and $f(x)=x$. In this case, $X$ is
simply the stochastic exponential of $\gamma_0 t+\sigma _{0}W_{t}+Z_{t}$, and
the exact value of $\mathbb{E}[f(X_{1})]$ can be computed explicitly: $%
E[f(X_{1})]=e^{\gamma_0 }$. Figure \ref{klv_hx_fx} plots the errors of the KLV
schemes of different degrees and the NV scheme on a log-log scale for data
sets I and II. In this case, the three approximations of the L\'{e}vy
measure, OA2, OA3 and OA4, have very similar performance and we only plot
the results for OA2. This happens because with the choice $f(x)=h(x)=x$, we
have $\mathbb{E}[f(\bar{X}_{1})]=\mathbb{E}[f(X_{1})]$ as soon as the
approximation scheme for the L\'{e}vy measure preserves the expectation of
the L\'{e}vy process, which is the case for all three approximation schemes
OA1, OA2 and OA3. In other words, for this choice of $f$ and $h$, the
approximation of the L\'{e}vy measure does not introduce any error. The
error is therefore exclusively determined by the approximation scheme which
is used between the jump times. However, in this case, the KLV and NV
methods perfom so well that all the errors are below the statistical error
due to the Monte Carlo method and it is not even possible to identify the
actual order of convergence.

\begin{figure}[tbp]
\centerline{
\includegraphics[width=0.55\textwidth]{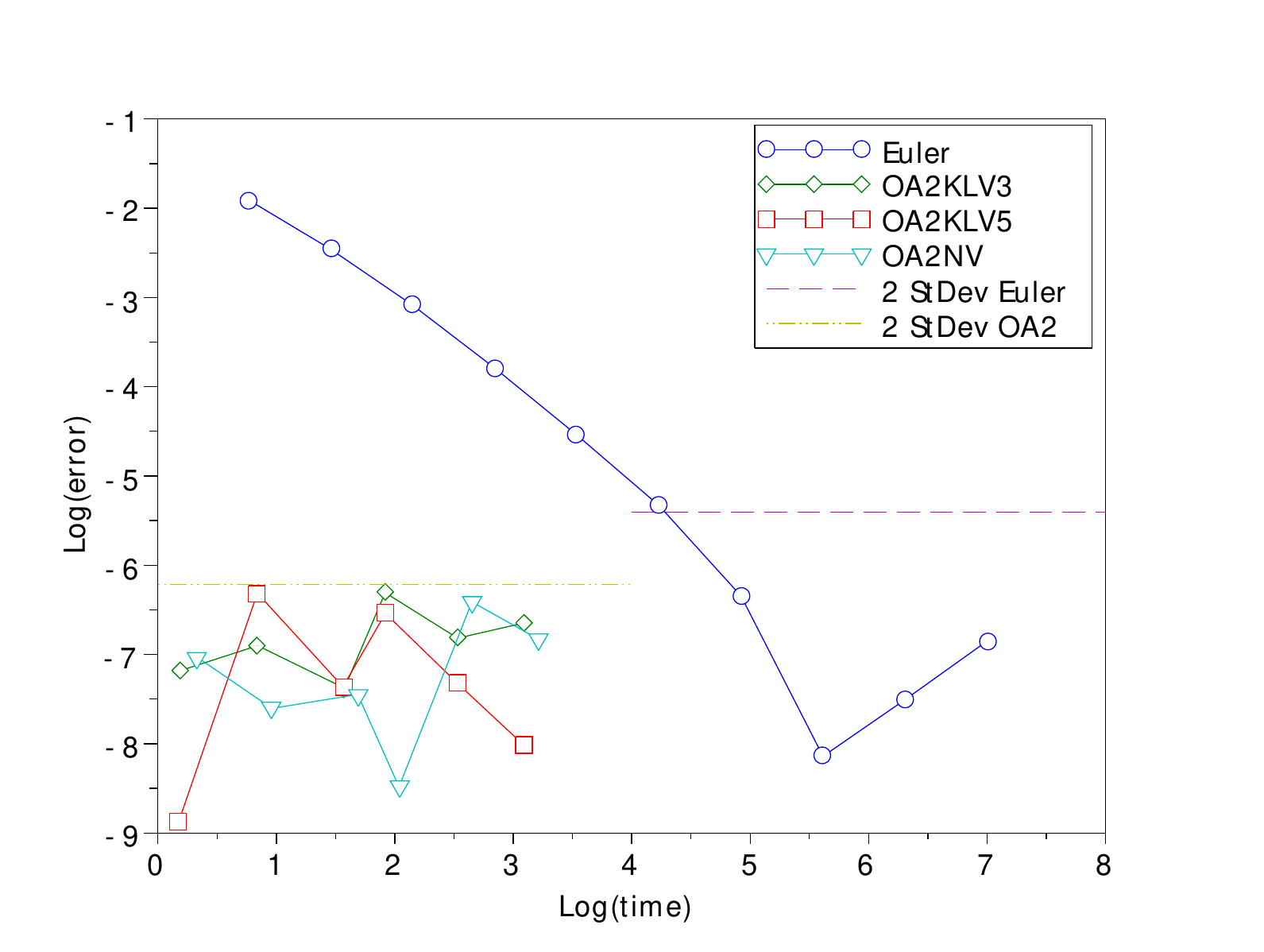}\includegraphics[width=0.55\textwidth]{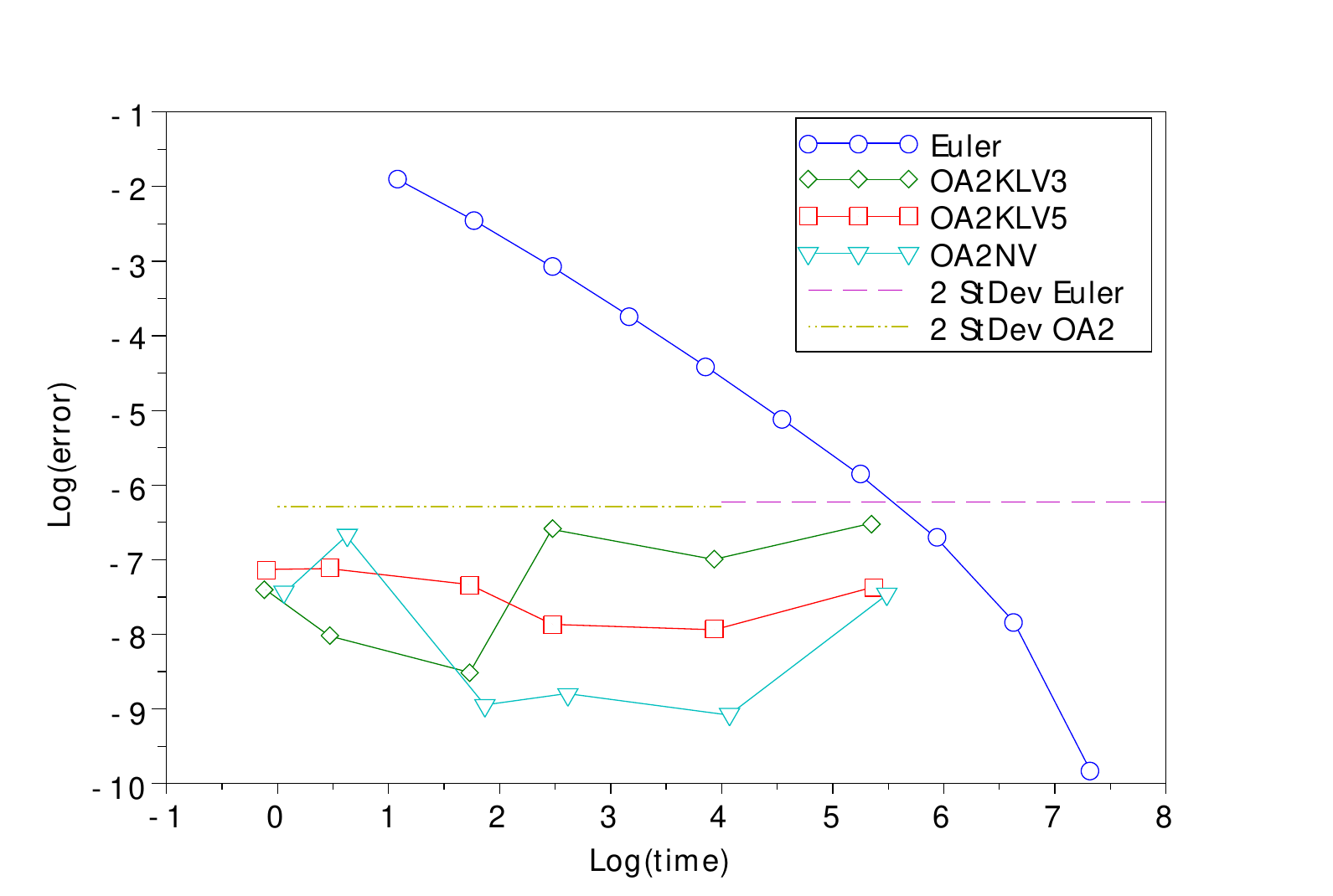}}
\caption{Errors of the cubature-based schemes for $h(x)=x$ and $f(x)=x$.
Left: parameters from data set I. Right: parameters from data set II. }
\label{klv_hx_fx}
\end{figure}

In our second example, we take $h(x)=x$ still and $f(x)=x^{2}$. The exact
value of $\mathbb{E}[f(X_{1})]$ can also be computed explicitly and is now
equal to 
\begin{align*}
\mathbb{E}[X_{T}^{2}]& =\mathbb{E}[\mathcal{E}(2Z+[Z,Z])_{T}]=\exp
\{E[2Z_{T}]+E[[Z,Z]_{T}]\} \\
& =\exp \left\{ 2\gamma_0 T+\sigma ^{2}T+T\int_{\mathbb{R}}y^{2}\nu
(dy)\right\} \\
& =\exp \left\{ 2\gamma_0 T+\sigma ^{2}T+TC\Gamma (2-\alpha )(\lambda
_{+}^{\alpha -2}+\lambda _{-}^{\alpha -2})\right\} .
\end{align*}%
Figure \ref{wt_hx_fx2} plots the errors of the weak Taylor schemes of
different orders on a log-log scale for data sets I and II, together
with the theoretical error rates. In this case,
one can clearly see the difference between the three schemes for
approximating the L\'{e}vy measure (OA2, OA3 and OA4) as well as the effect
of the parameter $\alpha $. 

For $\alpha=0.5$ (upper three graphs), the
error of approximating the L\'evy measure is of order of $\Lambda
^{1-\frac{n}{\alpha }} = \Lambda^{-3}$ for OA2, $\Lambda^{-5}$ for OA3
and $\Lambda^{-7}$ for OA4. Therefore, in these graphs, the
global error is dominated by the one of approximating the diffusion
part: we observe a clear improvement going from WT1 to WT2 and WT3,
and no visible change going from OA2 to OA3 and OA4. 

On the other hand, in the lower left graph, which
corresponds to $\alpha =1.5$ and $n=2$, the error of approximating the L\'{e}%
vy measure is of order of $\Lambda ^{1-\frac{n}{\alpha }}=\Lambda ^{-\frac{1%
}{3}}$, which dominates the error of approximating the continuous SDE for
any of the three weak Taylor schemes, and determines the slope of the curves
in this graph. In this context, using the optimal scheme with $n=3$ (lower
middle graph) or $n=4$ (lower right graph) leads to an substantial improvement
of performance. In this case, we observe similar behavior for $n=3$ and $n=4$
because the L\'{e}vy measure of $Z$ is locally symmetric near zero, which
means that $3$-moment scheme and $4$-moment scheme actually have the same
convergence rate.

The theoretical error rate of the Euler scheme is always
$\frac{1}{n}$, which corresponds to the straight solid line on the
graphs. The observed convergence rates appears slower than the
theoretical prediction due to our variance reduction method, which has
better performance when the number of discretization dates is small.

\begin{figure}[tbp]
\centerline{
\includegraphics[width=0.4\textwidth]{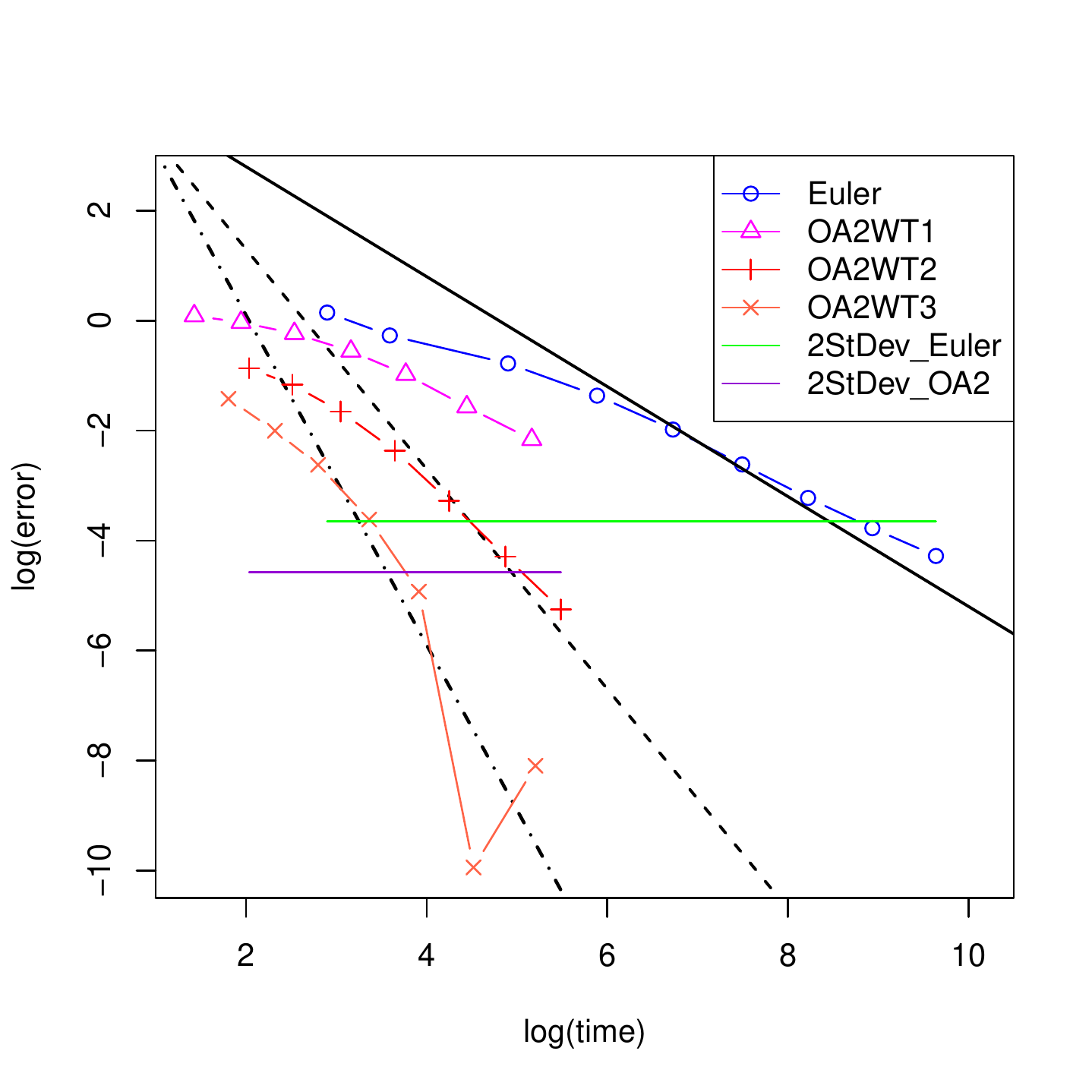}\includegraphics[width=0.4\textwidth]{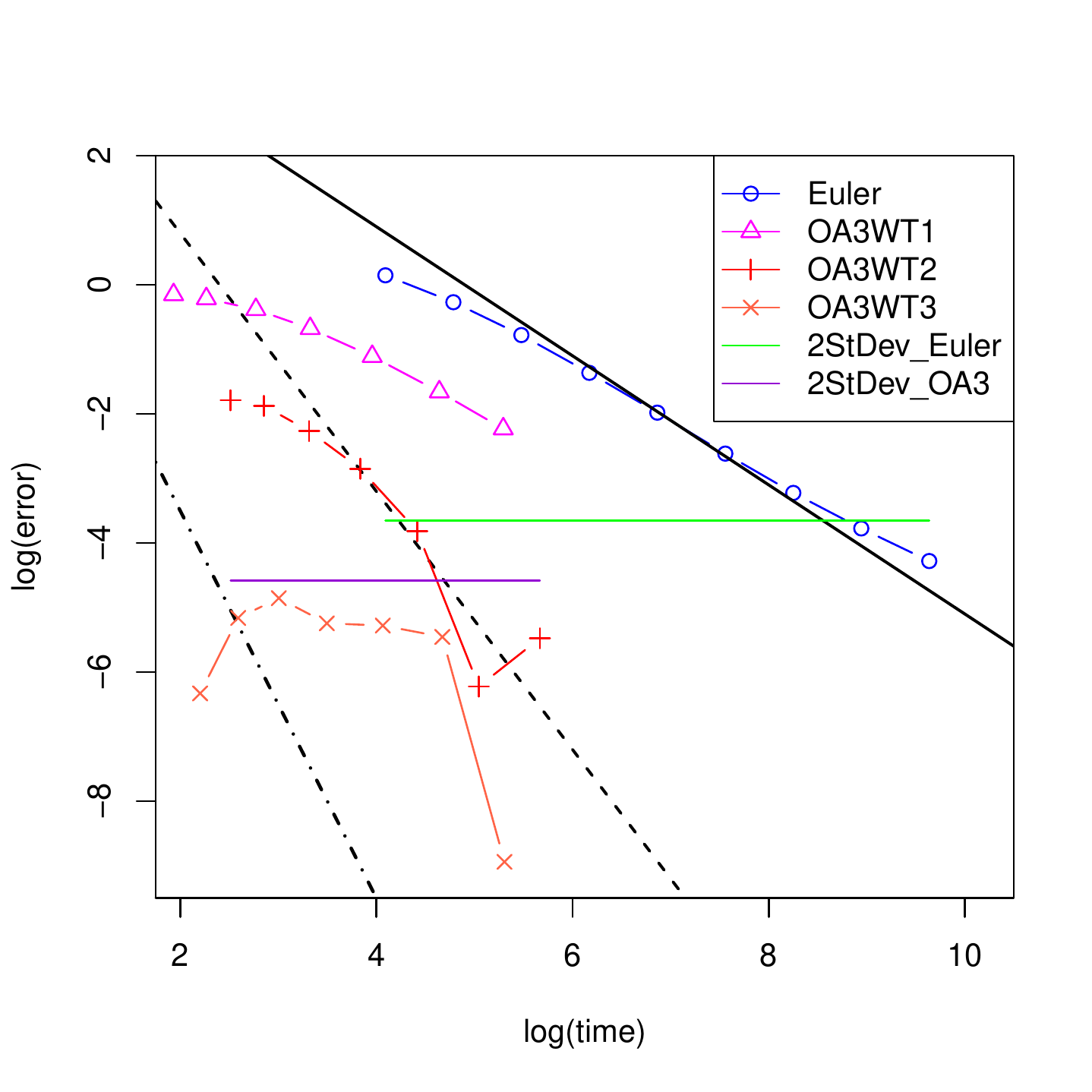}\includegraphics[width=0.4\textwidth]{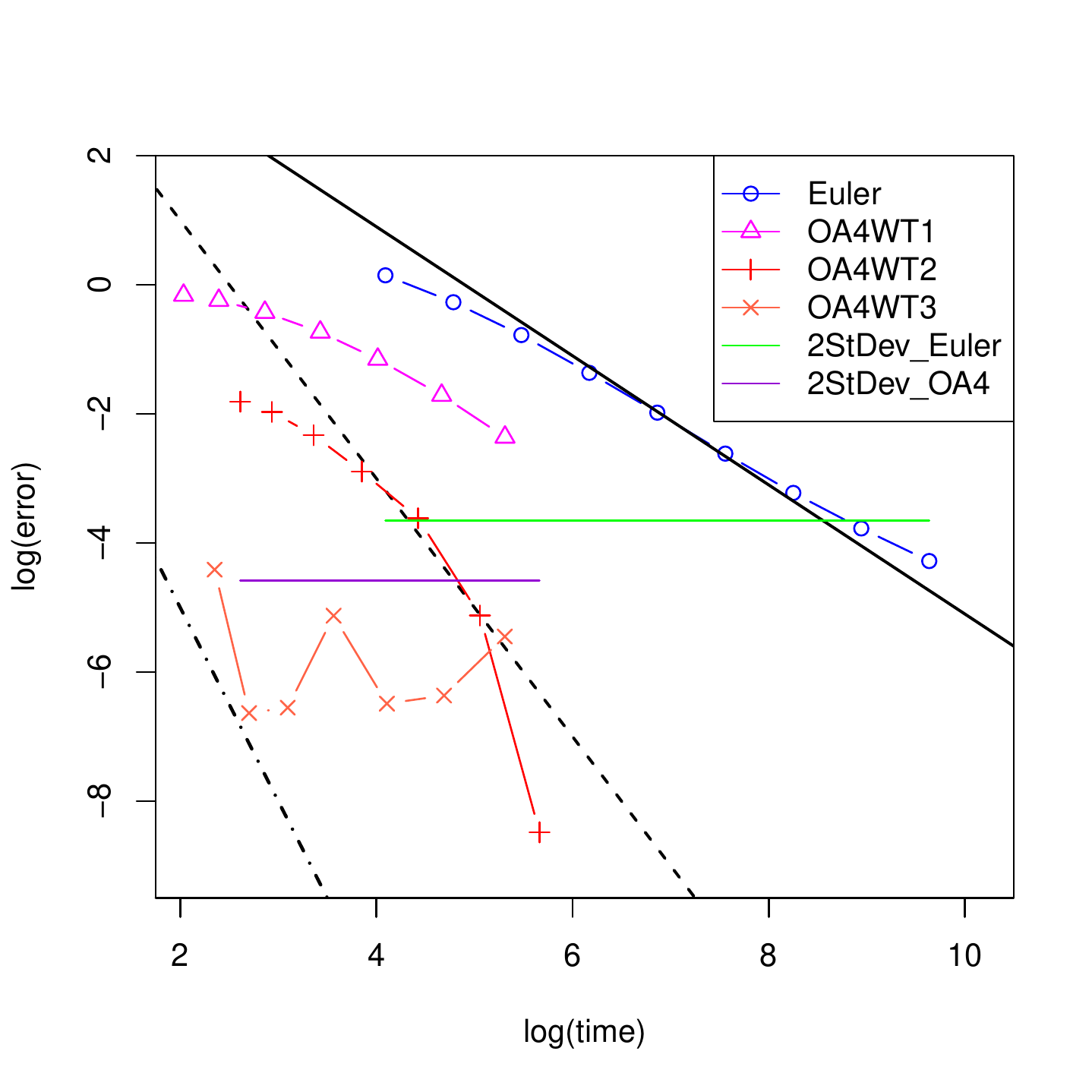}}
\par
\centerline{
\includegraphics[width=0.4\textwidth]{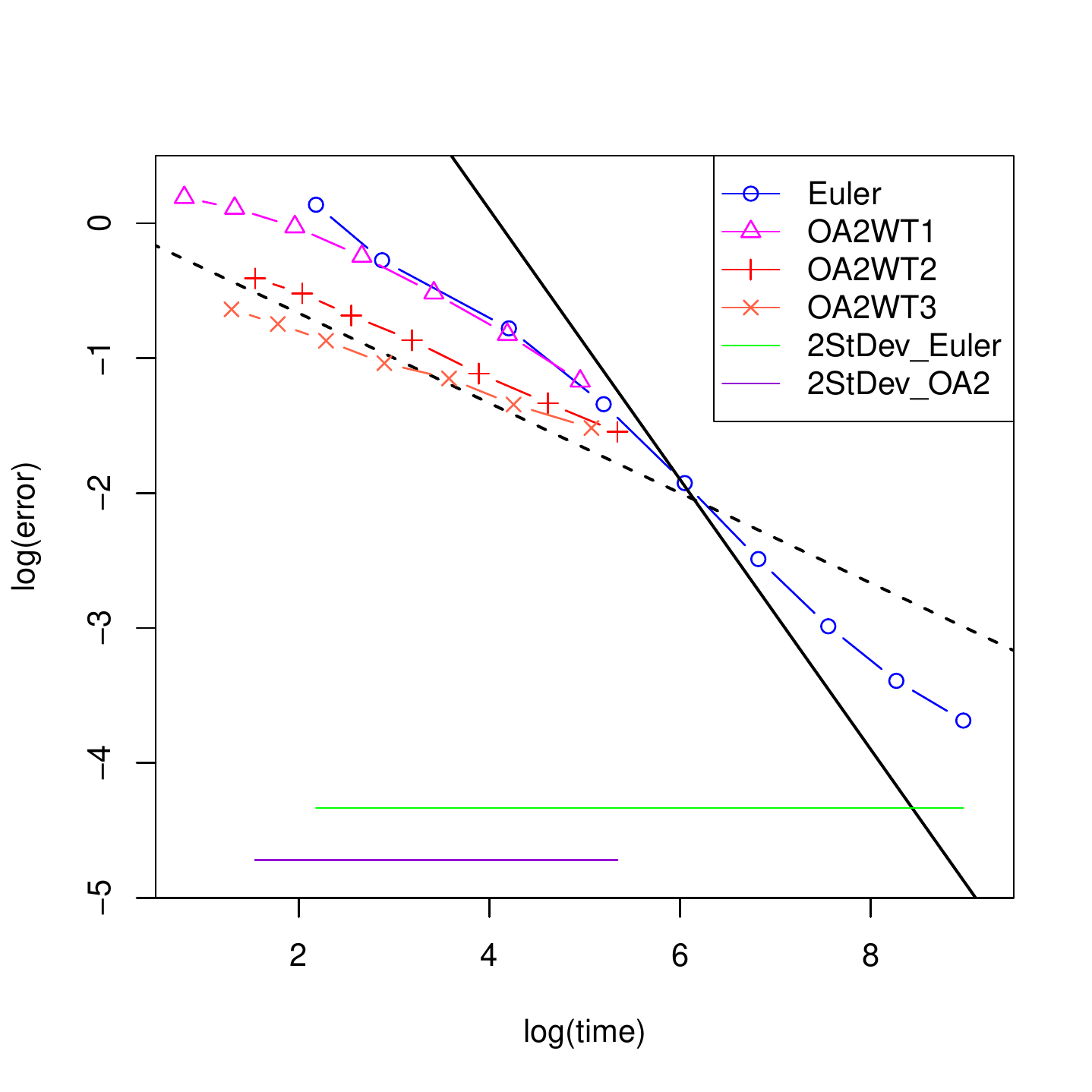}\includegraphics[width=0.4\textwidth]{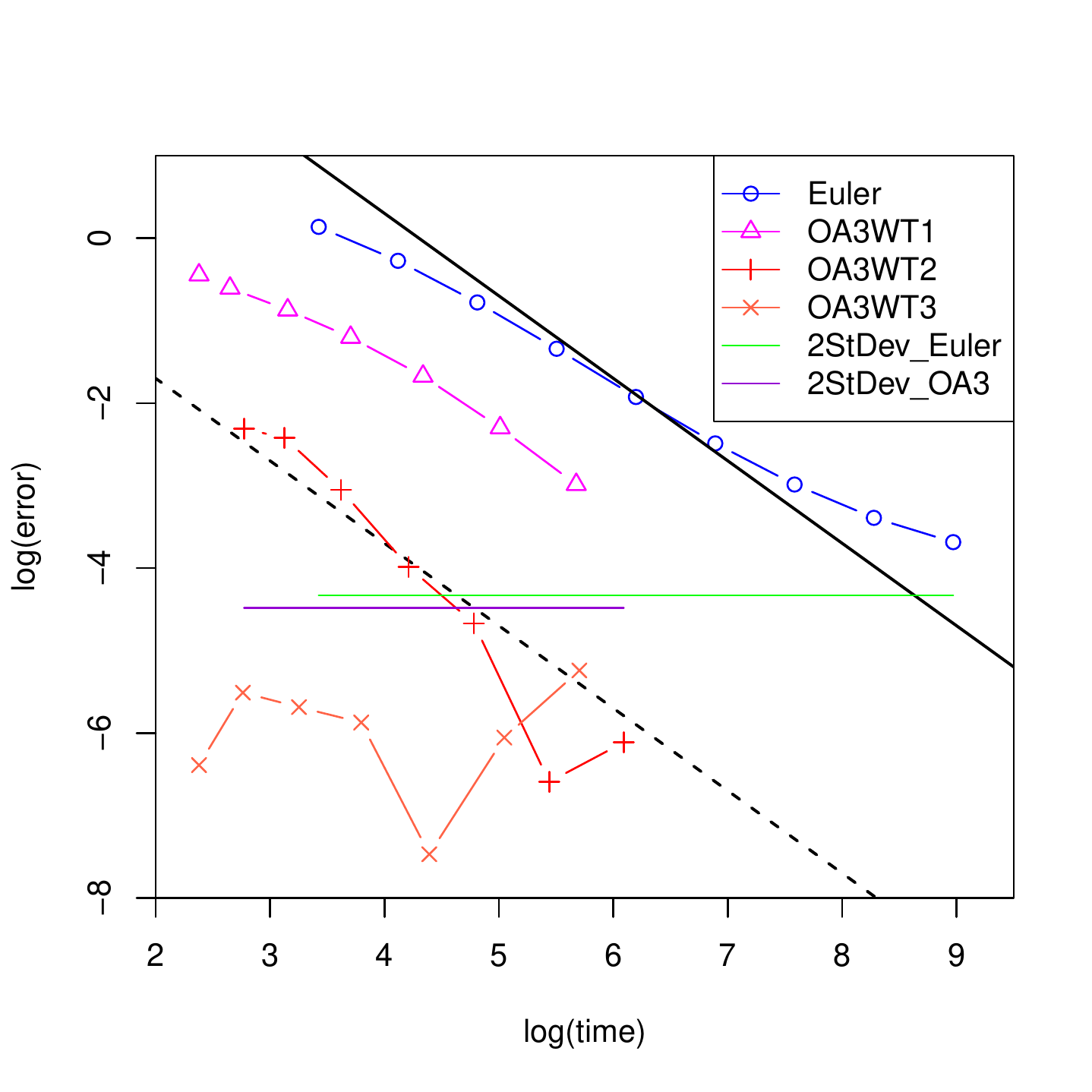}\includegraphics[width=0.4\textwidth]{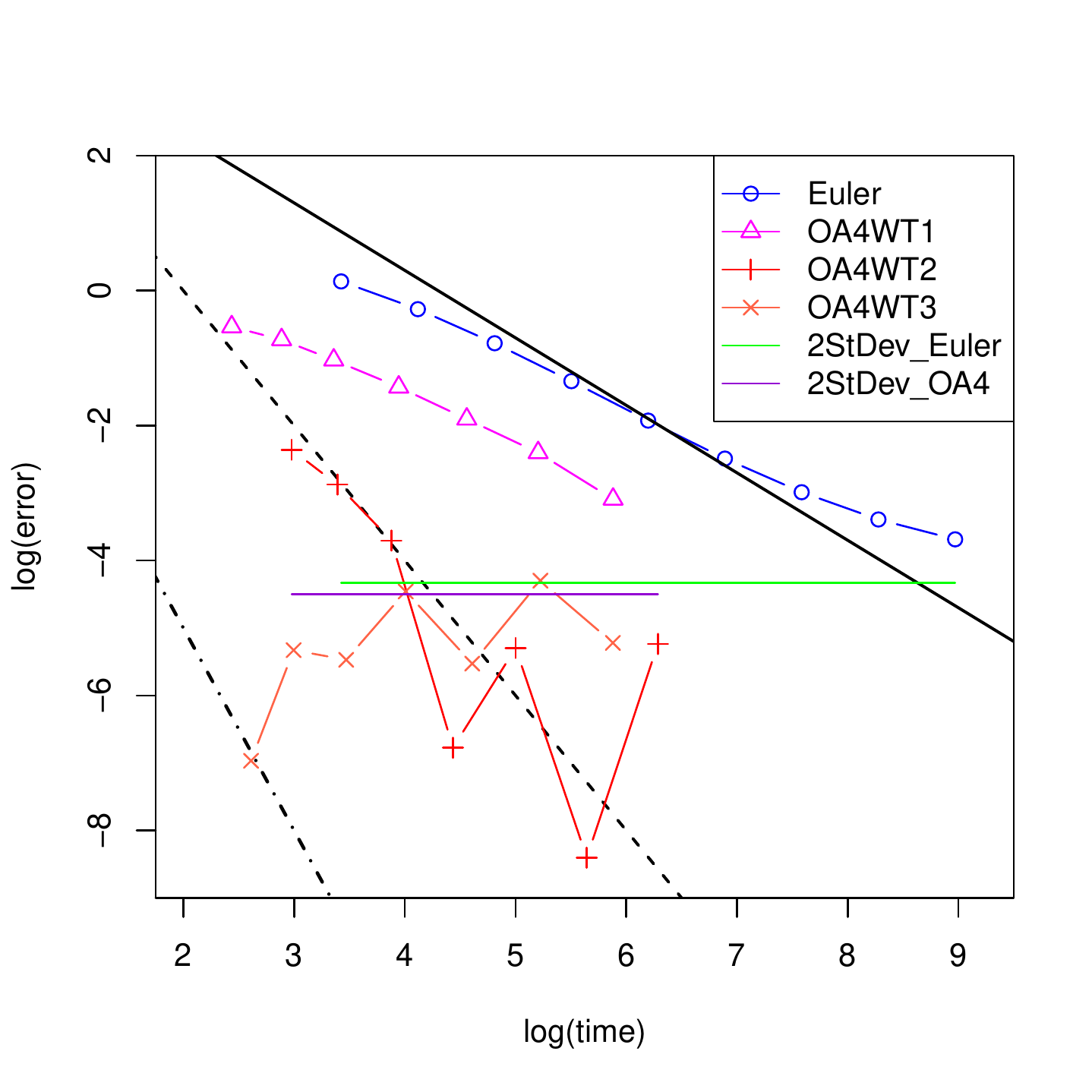}}
\caption{Errors of the weak Taylor schemes for $h(x)=x$ and $f(x)=x^{2}$.
Top: parameters from data set I. Bottom: parameters from data set II. }
\label{wt_hx_fx2}
\end{figure}

\appendix

\section{A moment matching problem}

\label{mm.app} In this section we present an auxiliary problem related with
the moment matching of finite measures.

We define 
\begin{equation*}
\mathcal{M}_{n}:=\{\bar{\nu}\in\mathcal{M}:\int_{\mathbb{R}}y^{k}\bar{\nu}%
(dy)=m_{k},k=2,\dots,n\},
\end{equation*}
where $m_{k},k=2,...,n$ are fixed real numbers. We want to compute $\inf _{%
\bar{\nu}\in\mathcal{M}_{n}}\bar{\nu}\left( \mathbb{R}\right) $, i.e., the
smallest intensity for which the moment constraints are feasible. This
problem is very similar to the classical 'truncated Hamburger moment
problem' and goes back to the works of Chebyshev, Markov and Stieltjes. The
known results on an infinite interval can be summarized as follows \cite%
{K-N77}:

\begin{proposition}
\label{momprobeven} Let $n=2q,q\in\mathbb{N}$ and let $\{m_k\}_{k=0}^n$ be
given . There exists a measure $\bar{\nu}\in M_{n}$ with $\bar{\nu}(\mathbb{R%
})=m_{0}$ if and only if the matrix $\{m_{i+j}\}_{i,j=0}^{q}$ is nonnegative
definite.
\end{proposition}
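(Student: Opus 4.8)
The plan is to recognise Proposition~\ref{momprobeven} as the even truncated Hamburger moment problem on $\mathbb{R}$ and to split the equivalence into an easy necessity and a more delicate sufficiency. For necessity, suppose a measure $\bar{\nu}$ realises the prescribed moments, i.e.\ $\int_{\mathbb{R}} y^{k}\bar{\nu}(dy)=m_{k}$. Then for every $c=(c_{0},\dots,c_{q})\in\mathbb{R}^{q+1}$,
\[
\sum_{i,j=0}^{q} c_{i} c_{j} m_{i+j} = \int_{\mathbb{R}} \left( \sum_{i=0}^{q} c_{i} y^{i} \right)^{2} \bar{\nu}(dy) \geq 0 ,
\]
so $H_{q}:=\{m_{i+j}\}_{i,j=0}^{q}$ is nonnegative definite. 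In the form actually used in Section~\ref{sec:6.2}, where only $m_{0}$ and $m_{2},\dots,m_{2q}$ are constrained and $m_{1}=\int y\,\bar{\nu}(dy)$ is free, this reads as the ``$H_{q}\succeq0$ for some $m_{1}$'' statement, which is what I would record.

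For sufficiency I would first treat the non-degenerate case $H_{q}\succ0$. Introduce the linear functional $L$ on polynomials of degree $\leq 2q$ with $L(y^{k})=m_{k}$, and the bilinear form $\langle p,r\rangle:=L(pr)$, which is well defined whenever $\deg(pr)\leq 2q$ and whose Gram matrix in the basis $1,y,\dots,y^{q}$ is exactly $H_{q}$. Since $H_{q}\succ0$, this is a genuine inner product on polynomials of degree $\leq q$; Gram--Schmidt then yields orthonormal polynomials $p_{0},\dots,p_{q}$, determined by $m_{0},\dots,m_{2q}$, obeying a three-term recurrence $y p_{k}=b_{k+1}p_{k+1}+a_{k}p_{k}+b_{k}p_{k-1}$ with $b_{k}>0$. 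I would assemble the coefficients into the $(q+1)\times(q+1)$ symmetric tridiagonal Jacobi matrix $J$ with diagonal $(a_{0},\dots,a_{q})$ and off-diagonal $(b_{1},\dots,b_{q})$, \emph{choosing the last diagonal entry $a_{q}$ freely}, since it is the only entry requiring the unavailable moment $m_{2q+1}$. By the spectral theorem $J=\sum_{i=0}^{q}\lambda_{i}u_{i}u_{i}^{\top}$ has real, simple eigenvalues $\lambda_{0}<\cdots<\lambda_{q}$ (simplicity from the nonvanishing off-diagonals), and setting $w_{i}:=m_{0}(u_{i})_{0}^{2}\geq0$ and $\bar{\nu}:=\sum_{i=0}^{q}w_{i}\delta_{\lambda_{i}}$ produces the candidate measure. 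The verification is that $\int y^{k}\bar{\nu}(dy)=\sum_{i}w_{i}\lambda_{i}^{k}=m_{0}(J^{k})_{00}$, together with $(J^{k})_{00}=m_{k}/m_{0}$ for all $0\leq k\leq 2q$; the latter holds because the free entry $a_{q}$ sits at position $(q,q)$ and first influences $(J^{k})_{00}$ only at $k=2q+1$, so the reproduced moments up to order $2q$ are independent of the arbitrary choice. (An equivalent route is by duality: every nonnegative univariate polynomial of degree $\leq 2q$ is a sum of two squares of polynomials of degree $\leq q$, so $H_{q}\succ0$ makes $L$ strictly positive on nonzero nonnegative polynomials, and a Riesz-type representation supplies the measure.)

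The main obstacle is the degenerate case, where $H_{q}\succeq0$ is singular. Here nonnegative definiteness alone is \emph{not} sufficient: for instance $m_{0}=m_{1}=0,\ m_{2}>0$ gives a nonnegative definite Hankel matrix admitting no representing measure, since total mass $0$ is incompatible with a positive second moment. The resolution is the classical rank/recursiveness condition: when $\operatorname{rank}H_{q}=r<q+1$, the kernel of $H_{q}$ encodes a degree-$r$ recurrence the moments must satisfy, yielding an $r$-atomic solution precisely when that recurrence is consistent. I would either read this off directly from the kernel structure, or obtain the measure as a weak limit of the non-degenerate solutions $\bar{\nu}_{\varepsilon}$ attached to $H_{q}+\varepsilon I\succ0$, the delicate point being to rule out escape of mass to infinity. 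For this paper it suffices to note that the non-degenerate construction already covers every situation arising in Section~\ref{sec:6.2}, where the relevant Hankel matrices are positive definite, and to refer to Krein--Nudelman \cite{K-N77} for the complete analysis of the boundary case.
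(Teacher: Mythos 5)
The paper does not actually prove Proposition \ref{momprobeven}: it is quoted as a summary of classical results on the truncated Hamburger moment problem and attributed to Krein--Nudelman \cite{K-N77}. Your proposal is therefore a genuinely different route --- a self-contained construction. The necessity direction (the Hankel quadratic form is the integral of a square) is the standard argument and is correct, including your remark that, since $M_n$ only constrains the moments $k=2,\dots,n$, the statement must be read as ``$\{m_{i+j}\}_{i,j=0}^{q}\succeq 0$ for some $m_1$'', which is exactly how it is used in Section \ref{sec:6.2}. Your sufficiency construction for $H_q\succ 0$ --- orthonormal polynomials for the functional $L$, three-term recurrence, a $(q+1)\times(q+1)$ Jacobi matrix whose last diagonal entry $a_q$ is chosen arbitrarily, and the spectral measure $\bar\nu=\sum_i m_0 (u_i)_0^2\,\delta_{\lambda_i}$ --- is the right construction and produces an at most $(q+1)$-atomic solution. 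Your observation that nonnegative definiteness alone is \emph{not} sufficient in the singular case (e.g.\ $m_0=m_1=0$, $m_2>0$) is correct and valuable: as literally stated, the ``if'' direction fails there, and the classical treatment repairs this either by a rank/recursiveness condition or by relaxing the top moment to an inequality. Since the Hankel matrices arising in Section \ref{sec:6.2} are moment matrices of an infinite-activity measure and hence nonsingular, restricting to the nondegenerate case covers every use the paper makes of the result.

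One step of your nondegenerate construction does need repair. You justify $(J^k)_{00}=m_k/m_0$ for $k\le 2q$ solely by noting that the free entry $a_q$ cannot influence $(J^k)_{00}$ before $k=2q+1$. That proves independence of the arbitrary choice, but not the value: you cannot anchor the value by appealing to a representing measure, since its existence is precisely what is being proved. The standard fix is short. For $k\le q$, computing $J^k e_0$ only invokes rows $0,\dots,q-1$ of $J$, and these rows exactly mirror the recurrence $y\,p_i=b_i p_{i-1}+a_i p_i+b_{i+1}p_{i+1}$, whose coefficients involve $L$ only up to degree $2q$; hence $y^k p_0=\sum_j (J^k)_{j0}\,p_j$ as an identity between polynomials of degree $\le q$. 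Then for $k,l\le q$, symmetry of $J$ and orthonormality of the $p_j$ give
\begin{equation*}
(J^{k+l})_{00}=\sum_j (J^k)_{j0}(J^l)_{j0}=\bigl\langle y^k p_0,\,y^l p_0\bigr\rangle=L\bigl(y^{k+l}p_0^2\bigr)=\frac{m_{k+l}}{m_0},
\end{equation*}
and every exponent $\le 2q$ is of the form $k+l$ with $k,l\le q$. With this inserted, your nondegenerate argument is complete and correct.
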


\begin{corollary}
\label{sylv.cor} Let $n=2q,q\in\mathbb{N}$, and let $\{m_k\}_{k=0}^n$ be
given such that $m_k = \int_{\mathbb{R}} y^k \nu(dy),\ 2\le k\le n$ for some
nonnegative measure $\nu$. Then there exists a measure $\bar{\nu}\in M_{n}$
with $\bar{\nu}(\mathbb{R})=m_{0}$ if and only if $\det(\{m_{i+j}%
\}_{i,j=0}^{q})\geq0$.
\end{corollary}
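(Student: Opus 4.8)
The plan is to read the corollary as a purely linear-algebraic refinement of Proposition \ref{momprobeven}. That proposition already equates the existence of a measure $\bar{\nu}\in M_{n}$ with $\bar{\nu}(\mathbb{R})=m_{0}$ to the nonnegative-definiteness of the Hankel matrix $H:=\{m_{i+j}\}_{i,j=0}^{q}$. Since $H\succeq 0$ trivially forces $\det H\ge 0$ (a determinant is a principal minor), the only implication needing an argument is the converse: under the standing hypothesis that $m_{k}=\int_{\mathbb{R}}y^{k}\nu(dy)$ for $2\le k\le n$, I must show that the single scalar inequality $\det H\ge 0$ already forces $H\succeq 0$. Thus the corollary will follow once I prove that, in this situation, $H\succeq 0\iff\det H\ge 0$.

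The key observation I would isolate first is that the lower-right $q\times q$ block $B:=\{m_{i+j}\}_{i,j=1}^{q}$ is positive definite. Indeed, for any $c=(c_{1},\dots,c_{q})$,
\[
c^{\top}Bc=\int_{\mathbb{R}}\Big(\sum_{i=1}^{q}c_{i}y^{i}\Big)^{2}\nu(dy)\ge 0,
\]
so $B$ is the Gram matrix of the monomials $y,\dots,y^{q}$ in $L^{2}(\nu)$ and hence nonnegative definite; it is strictly positive definite because a nonzero polynomial of degree at most $q$ vanishes only on a finite set, while $\nu$, being an infinite-activity L\'evy measure (more generally, having at least $q+1$ points of support), charges infinitely many points, so no such polynomial can vanish $\nu$-a.e.

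With $B\succ 0$ in hand I would write $H$, after the harmless symmetric permutation that moves the index $0$ to the last position, in block form
\[
H=\begin{pmatrix} B & w \\ w^{\top} & m_{0}\end{pmatrix},\qquad w=(m_{1},\dots,m_{q})^{\top},
\]
and invoke the Schur complement. Because $\det B>0$, the factorization
\[
\det H=\det B\cdot\big(m_{0}-w^{\top}B^{-1}w\big)
\]
shows that $\det H\ge 0$ is equivalent to $m_{0}-w^{\top}B^{-1}w\ge 0$, and the standard Schur-complement criterion says that, when $B\succ 0$, this scalar inequality is in turn equivalent to $H\succeq 0$. This is precisely the ``Sylvester's criterion'' step already used in the proof of Theorem \ref{TheoMainRegVar}. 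Chaining the equivalences $\det H\ge 0\iff H\succeq 0\iff$ (by Proposition \ref{momprobeven}) existence of the desired $\bar{\nu}$ then finishes the proof.

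The only genuinely delicate point is the positive definiteness of $B$: if $\nu$ were supported on $q$ or fewer points the block could be singular and the clean equivalence $\det H\ge 0\iff H\succeq 0$ may fail (one already sees this for $q=1$ with $\nu=c\,\delta_{0}$). In the setting of this paper $\nu$ is an infinite-activity L\'evy measure, so this degeneracy never occurs and the block argument above applies verbatim.
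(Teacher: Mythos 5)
You take the same route as the paper's own proof: reduce, via Proposition \ref{momprobeven}, to the purely linear-algebraic claim that $\det H\ge 0$ forces $H:=\{m_{i+j}\}_{i,j=0}^{q}\succeq 0$, exploiting that the lower-right block $B:=\{m_{i+j}\}_{i,j=1}^{q}$ is a moment matrix of $\nu$. The difference lies in how that claim is justified, and here your version is genuinely tighter. The paper records only that $B$ is \emph{nonnegative} definite and then appeals in one line to ``Sylvester's criterion applied to the lower right corner minors''; read literally this is not a valid inference, since nonnegativity of the trailing principal minors does not imply positive semidefiniteness (consider $\mathrm{diag}(-1,0)$). Your Schur-complement factorization $\det H=\det B\,\bigl(m_{0}-w^{\top}B^{-1}w\bigr)$ with $B\succ 0$ supplies exactly the missing step, and your caveat that \emph{strict} positive definiteness of $B$ is needed is not pedantry: if $\nu$ has at most $q$ support points the corollary can fail outright. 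For instance, with $q=2$ and $\nu=\delta_{1}$ (so $m_{2}=m_{3}=m_{4}=1$) one computes $\det H=-(m_{1}-1)^{2}$, which is $\ge 0$ when $m_{1}=1$, yet by the equality case of Cauchy--Schwarz any nonnegative measure with these moments has the form $a\delta_{0}+\delta_{1}$ and hence mass at least $1$, so no $\bar{\nu}$ with $m_{0}<1$ exists. The corollary therefore implicitly carries the nondegeneracy hypothesis you isolate; this is harmless in the paper, because where the corollary is actually used (Theorem \ref{TheoMainRegVar}) the relevant measure is the restriction of an infinite-activity L\'evy measure near zero, which charges infinitely many points, so $B\succ 0$ exactly as you argue. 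In short: your proof follows the paper's strategy but is correct precisely where the paper's two-line argument is loose.
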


\begin{proof}
Using Proposition \ref{momprobeven}, it is enough to check that the the
matrix $\{m_{i+j}\}_{i,j=0}^{q}$ is nonnegative definite. By the definition
of $m_k$ for $k=2,...,n$ we have that the matrix $\{m_{i+j}\}_{i,j=1}^{q}$
is nonnegative definite. Hence, by the Sylvester's criterion applied to the
lower right corner minors of the matrix $\{m_{i+j}\}_{i,j=0}^{q}$, we have
that in order for it to be nonnegative definite it is sufficient that $%
\det(\{m_{i+j}\}_{i,j=0}^{q})\geq0$.
\end{proof}

\begin{corollary}
\label{exist.cor} For $(m_k)_{k=2}^n$ as in Corollary \ref{sylv.cor}, the
set of values $m_{0}$ for which there exists a measure $\bar{\nu}\in M_{n}$
with $\bar{\nu}(\mathbb{R})=m_{0}$ is of the form $[m_{0}^{\ast},\infty)$.
\end{corollary}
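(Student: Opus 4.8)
The plan is to reduce the statement to the determinant characterisation already established in Corollary \ref{sylv.cor} and then to exploit the fact that the total mass $m_0$ enters the relevant Hankel determinant only affinely. Write $n=2q$, keep $m_2,\dots,m_n$ fixed, and for a candidate pair $(m_0,m_1)$ let $H(m_0,m_1):=\{m_{i+j}\}_{i,j=0}^{q}$ be the associated Hankel matrix. Since membership in $M_n$ constrains only the moments of order $2,\dots,n$ and leaves the first moment free, a value $m_0$ is the mass of some $\bar\nu\in M_n$ if and only if there is a choice of $m_1$ for which $\det H(m_0,m_1)\ge 0$; this is precisely Corollary \ref{sylv.cor} optimised over the free parameter $m_1$, and the existence direction of that corollary simultaneously produces the required measure.

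First I would record the elementary monotonicity that forces the feasible set to be an interval unbounded above: if $\bar\nu\in M_n$ has mass $m_0$, then for every $c\ge 0$ the measure $\bar\nu+c\delta_0$ again lies in $M_n$, because $\int_{\mathbb{R}}y^{k}\,\delta_0(dy)=0$ for all $k\ge 1$, so all constraints $k=2,\dots,n$ are preserved while the mass becomes $m_0+c$. Hence feasibility of $m_0$ implies feasibility of every $m_0'\ge m_0$, and the feasible set is of the form $[m_0^{*},\infty)$ or $(m_0^{*},\infty)$, where $m_0^{*}:=\inf\{\bar\nu(\mathbb{R}):\bar\nu\in M_n\}$.

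It then remains to show that the infimum $m_0^{*}$ is itself attained, i.e. that the half-line is closed on the left, and this is where the determinant expansion does the work. Expanding $\det H(m_0,m_1)$ along the first row, the only contribution containing $m_0$ is $m_0$ times the cofactor $D_q:=\det(\{m_{i+j}\}_{i,j=1}^{q})$, and this cofactor does not involve $m_1$; thus $\det H(m_0,m_1)=m_0 D_q+R(m_1)$, where $R$ is a concave quadratic in $m_1$ independent of $m_0$ (its leading coefficient is $-\det(\{m_{i+j}\}_{i,j=2}^{q})\le 0$). Consequently $\phi(m_0):=\max_{m_1}\det H(m_0,m_1)=m_0 D_q+R^{*}$ is affine and, since $D_q>0$, strictly increasing in $m_0$. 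The feasible set therefore equals $\{m_0:\phi(m_0)\ge 0\}=[m_0^{*},\infty)$ with $m_0^{*}=-R^{*}/D_q$, a closed half-line, which is the claim.

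The one point requiring care, and the main (if minor) obstacle, is the strict positivity of the leading cofactor $D_q$. This matrix is the Gram matrix of $\{y,y^{2},\dots,y^{q}\}$ in $L^{2}(\nu)$, so $D_q>0$ exactly when these monomials are linearly independent in $L^{2}(\nu)$, which holds whenever $\nu$ is not supported on fewer than $q$ points; this is automatic for the absolutely continuous, infinite-activity L\'evy measures of interest. In the degenerate case $D_q=0$ the full-determinant criterion no longer isolates $m_0$, and one must instead invoke the lower-order principal minors of $H$ (equivalently, observe that the truncated moment problem is then determinate and supported on at most $q$ points); I would dispose of this as a short separate remark rather than let it clutter the main argument.
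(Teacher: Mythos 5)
Your proposal is sound relative to the results it is allowed to cite, and it essentially reconstructs the argument the paper leaves implicit: the paper states this corollary with no proof at all, presenting it as an immediate consequence of Corollary \ref{sylv.cor}, and your expansion $\det H(m_0,m_1)=m_0D_q+R(m_1)$, with feasibility read off from the sign of a function that is affine and nondecreasing in $m_0$ after optimising out $m_1$, is exactly that implicit argument. Your first step --- padding a feasible measure with $c\delta_0$ to sweep out all larger masses --- is a genuinely useful elementary addition, and it is in fact the same device the paper itself uses later, in the proof of Theorem \ref{TheoMainRegVar} (``possibly an additional atom at $0$ to satisfy the intensity constraint''). One point you glossed is actually automatic: when $D_q=\det(\{m_{i+j}\}_{i,j=1}^{q})>0$, the matrix $\{m_{i+j}\}_{i,j=1}^{q}$ is positive definite, hence so is its principal submatrix $\{m_{i+j}\}_{i,j=2}^{q}$; the leading coefficient of $R$ is therefore strictly negative and the maximum over $m_1$ you take is attained.

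There are, however, two soft spots. The minor one: the hypotheses of the corollary only say that $(m_k)_{k=2}^{n}$ are the moments of \emph{some} nonnegative measure $\nu$, which may be finitely supported, so $D_q=0$ is a legitimate case; there your main argument is silent, your fallback is only a sketch, and note that Corollary \ref{sylv.cor} itself is delicate in that case, since the Sylvester/Schur-complement reduction behind it presupposes that the inner block $\{m_{i+j}\}_{i,j=1}^{q}$ is nonsingular. The structural one: the closedness at the left endpoint --- the only part of the statement your $\delta_0$ argument does not already give --- rests on taking Proposition \ref{momprobeven} at face value, and it can genuinely fail. Take $n=2$ and $m_2>0$: your criterion declares $m_0=0$ feasible (choose $m_1=0$, so $\det H=0$), yet no measure of zero mass has positive second moment, so the true feasible set is $(0,\infty)$, not $[0,\infty)$. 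The culprit is that nonnegative definiteness of the Hankel matrix is necessary but not sufficient for solvability of the truncated Hamburger problem when that matrix is singular (rank/flatness conditions are needed). This imprecision is inherited from the paper --- whose own table records the unattained infimum $0$ as a minimum for $n=2,3$ --- so it is not an error you introduced; but it means the corollary, and hence any proof of it including yours, is false as literally stated in such degenerate cases, and a fully correct treatment would either add a nondegeneracy hypothesis (e.g.\ $D_q>0$) or weaken the conclusion to a half-line that may be open on the left.
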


The case when $n$ is odd can be deduced from the previous one.

\begin{corollary}
Let $n=2q+1,q\in\mathbb{N}$. There exists a measure $\bar{\nu}\in M_{n}$
with $\bar{\nu}(\mathbb{R})=m_{0}$ if and only if the matrix $%
\{m_{i+j}\}_{i,j=0}^{q+1}$ is nonnegative definite for some $m_{1}\in\mathbb{%
R}$ and $m_{n+1}\in\mathbb{R}_{+}$.
\end{corollary}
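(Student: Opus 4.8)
The plan is to reduce the odd case $n=2q+1$ to the even case of order $n'=2(q+1)$, which is already settled by Proposition \ref{momprobeven}. The point is that $\{m_{i+j}\}_{i,j=0}^{q+1}$ is precisely the Hankel matrix of order $q+1$ attached to the even truncated sequence $(m_0,m_1,m_2,\dots,m_{2q+2})$, whose entries are the prescribed moments $m_2,\dots,m_n$, the target mass $m_0$, and the two unconstrained parameters $m_1$ and $m_{n+1}=m_{2q+2}$. So the assertion is exactly that a measure realizing the odd-order data $(m_0,m_2,\dots,m_n)$ exists if and only if these two free parameters can be chosen so that the enlarged even problem becomes feasible.

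For the implication from the matrix condition to existence, I would assume $m_1\in\mathbb{R}$ and $m_{n+1}\geq 0$ are such that $\{m_{i+j}\}_{i,j=0}^{q+1}$ is nonnegative definite, and apply Proposition \ref{momprobeven} with $2q'=2q+2$. This produces a measure $\bar{\nu}$ realizing all of $m_0,m_1,\dots,m_{2q+2}$; in particular $\bar{\nu}(\mathbb{R})=m_0$ and $\int_{\mathbb{R}}y^k\bar{\nu}(dy)=m_k$ for $2\leq k\leq n$, so $\bar{\nu}\in M_n$ with the required mass. (The constraint $m_{n+1}\in\mathbb{R}_+$ is in fact redundant here, since $m_{n+1}$ is a diagonal entry of a nonnegative definite matrix.) For the converse, given $\bar{\nu}\in M_n$ with $\bar{\nu}(\mathbb{R})=m_0$, I would first note that $\int_{\mathbb{R}}|y|^n\bar{\nu}(dy)<\infty$ (needed for $m_n$ to be well defined) together with $\bar{\nu}(\mathbb{R})<\infty$ forces every absolute moment of order $\leq n$ to be finite, so $m_1:=\int_{\mathbb{R}}y\,\bar{\nu}(dy)$ is well defined and $(m_0,m_1,\dots,m_n)$ is a genuine odd-order truncated Hamburger moment sequence.

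The delicate point, and the main obstacle, is that $\bar{\nu}$ need not possess a finite moment of order $n+1$, so one cannot simply set $m_{n+1}=\int_{\mathbb{R}}y^{n+1}\bar{\nu}(dy)$. I would resolve this using the classical fact (see \cite{K-N77}) that a solvable truncated moment problem of odd order $2q+1$ always admits a finitely atomic representing measure $\mu$ sharing the moments $m_0,\dots,m_n$; such a $\mu$ automatically has finite moments of every order. Setting $m_{n+1}:=\int_{\mathbb{R}}y^{n+1}\mu(dy)\geq 0$, the matrix $\{m_{i+j}\}_{i,j=0}^{q+1}$ becomes the order-$(q+1)$ moment matrix of $\mu$ and is therefore nonnegative definite, giving exactly the required condition with this choice of $m_1$ and $m_{n+1}$. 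An alternative to the atomic-representation route is to phrase the extension purely linear-algebraically, as the solvability of the Hankel extension problem (a compatibility of the vector $(m_{q+1},\dots,m_{2q+1})$ with the range of $\{m_{i+j}\}_{i,j=0}^{q}$), but the finitely supported representation is the most transparent and is consistent with the tools already invoked in this appendix.
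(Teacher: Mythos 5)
Your proposal is correct and follows exactly the route the paper intends: the paper offers no written proof beyond the remark that the odd case ``can be deduced from the previous one,'' i.e.\ from Proposition \ref{momprobeven} applied at order $2q+2$, which is precisely your reduction. You also correctly identify and resolve the one genuinely delicate point that the paper glosses over --- that $\bar{\nu}\in M_{n}$ need not have a finite moment of order $n+1$, so one must pass to a finitely atomic representing measure (or, equivalently, argue via the Hankel range/Schur-complement condition) before defining $m_{n+1}$.
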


A simple matrix algebra computation then yields the following solutions for
small $n$:\newline
\begin{equation*}
\begin{tabular}{|l|l|l|l|l|}
\hline
$n$ & $2$ & $3$ & $4$ & $5$ \\ \hline
$\min_{\bar{\nu}\in\mathcal{M}_{n}}\bar{\nu}\left( \mathbb{R}\right) $ & $0$
& $0$ & $\frac{m_{2}^{2}}{m_{4}}$ & $\frac{m_{2}^{2}}{m_{4}}$ \\ \hline
\end{tabular}%
\end{equation*}

\section{Some useful lemmas on the solutions of SDEs}

In this section we will assume the notation established in the first section.

\begin{lemma}
\label{L_Lp_Uniform_Bounds}Assume that, for some $p\geq2,$%
\begin{equation*}
\int_{\mathbb{R}}\left\vert y\right\vert ^{p}\nu\left( dy\right)
<\infty,\quad\sup_{\nu\in\mathcal{A}}\int_{\mathbb{R}}\left\vert
y\right\vert ^{p}\bar\nu\left( dy\right) <\infty,
\end{equation*}
$h,b,\sigma\in C_b^{1}\left( \mathbb{R}\right) $.%
%\begin{align*}
%\left\vert h\left(  x\right)  \right\vert +\left\vert b\left(  x\right)
%\right\vert +\left\vert \sigma\left(  x\right)  \right\vert  &  \leq K\left(
%1+\left\vert x\right\vert \right)  ,\\
%\left\vert h^{\prime}\left(  x\right)  \right\vert +\left\vert b^{\prime
%}\left(  x\right)  \right\vert +\left\vert \sigma^{\prime}\left(  x\right)
%\right\vert  &  \leq K,\quad\text{for some }K<\infty.
%\end{align*}
Then, there exists a constant $C>0,$ which does not depend on $\bar \nu,$
such that 
\begin{align*}
\mathbb{E}\left [\sup_{0\leq t\leq1}\left\vert X_{t}\right\vert ^{p}\right ]
& \leq C\left( 1+\left\vert x\right\vert ^{p}\right) , \\
\mathbb{E}\left [\sup_{0\leq t\leq1}\left\vert \bar X_{t}\right\vert
^{p}\right ] & \leq C\left( 1+\left\vert x\right\vert ^{p}\right) .
\end{align*}
\end{lemma}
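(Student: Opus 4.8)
First I would note that, since $b,\sigma,h\in C_b^1$, their first derivatives are bounded, so the coefficients are globally Lipschitz and in particular of linear growth: $|b(x)|+|\sigma(x)|+|h(x)|\le C(1+|x|)$ with $C$ independent of $\bar\nu$. Next I would rewrite the jump term of $X$ so that it is driven only by the compensated measure over all of $\mathbb{R}$: because $\int_{|y|>1}|y|\,\nu(dy)<\infty$ (indeed $\int_{|y|>1}|y|^{p}\nu(dy)<\infty$ for $p\ge2$), the large-jump integral equals $\int_0^t\int_{|y|>1}h(X_{s-})y\,\widetilde N(dy,ds)$ plus the drift $\int_0^t\int_{|y|>1}h(X_{s})y\,\nu(dy)\,ds$, which has linear growth and may be absorbed into the drift coefficient. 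Thus $X$ solves an SDE with a linear-growth drift, a Brownian integral, and a single compensated Poisson integral $\int_0^t\int_{\mathbb{R}}h(X_{s-})y\,\widetilde N(dy,ds)$.

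The key step is to bound the $p$-th moment of each piece. I would localize by $\tau_N=\inf\{t:|X_t|\ge N\}$ to guarantee finiteness, and set $\phi_N(t)=\mathbb{E}[\sup_{s\le t\wedge\tau_N}|X_s|^p]$. For the drift I use Jensen's inequality; for the Brownian integral the Burkholder--Davis--Gundy inequality; and for the compensated jump integral Kunita's inequality, which for $p\ge2$ bounds $\mathbb{E}[\sup_{s\le t}|\int_0^s\int H\,\widetilde N|^p]$ by a constant times $\mathbb{E}[(\int_0^t\int|H|^2\nu\,ds)^{p/2}]+\mathbb{E}[\int_0^t\int|H|^p\nu\,ds]$. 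With $H(s,y)=h(X_{s-})y$ and the linear growth of $h$, these contributions are controlled by the finite moments $\int_{\mathbb{R}}y^2\nu(dy)$ and $\int_{\mathbb{R}}|y|^p\nu(dy)$ times $\int_0^t(1+\phi_N(s))\,ds$. Collecting the estimates yields $\phi_N(t)\le C(1+|x|^p)+C\int_0^t\phi_N(s)\,ds$, and Gronwall's lemma gives $\phi_N(t)\le C(1+|x|^p)e^{Ct}$ with $C$ independent of $N$; letting $N\to\infty$ by monotone convergence gives the first bound on $[0,1]$.

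The estimate for $\bar X$ is obtained identically from its representation with drift $\bar b$ (again of linear growth, since $\bar\gamma=\int_{|y|>1}y(\nu-\bar\nu)(dy)$ is finite and bounded over $\mathcal A$), the diffusion $\sigma$, the additional Brownian term $\bar\sigma\int_0^t h(\bar X_s)\,dW_s$, and the finite-activity jump integral $\int_0^t\int_{\mathbb{R}}h(\bar X_{s-})y\,\bar N(dy,ds)$, which I would compensate, its compensator contributing another linear-growth drift. The only additional point is to check that the resulting constant does not depend on $\bar\nu$: the new Brownian term contributes $\bar\sigma^2$, and Kunita's inequality applied to $\bar N$ contributes $\int_{\mathbb{R}}y^2\bar\nu(dy)$ and $\int_{\mathbb{R}}|y|^p\bar\nu(dy)$, all uniformly bounded over $\mathcal A$ by the standing hypothesis $\sup_{\bar\nu\in\mathcal A}\int|y|^p\bar\nu(dy)<\infty$ together with the corresponding uniform bound on $\bar\sigma$. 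Tracking precisely which moments of $\bar\nu$ enter each term, so as to secure this uniformity, is the main (though routine) bookkeeping point; the genuine analytic work lies in the Kunita estimate for the infinite-activity part of $Z$ and in the localization/Gronwall mechanism that converts the moment inequalities into the stated bound.
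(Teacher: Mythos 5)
Your proof is correct and follows exactly the route the paper intends: the paper gives no detailed argument, stating only that the lemma is ``a standard generalization of the proof for continuous SDEs'' via Kunita's second inequality (Corollary 4.4.24 in Applebaum), and your localization--BDG/Kunita--Gronwall scheme, with the uniformity in $\bar\nu$ traced to $\sup_{\bar\nu\in\mathcal{A}}\int|y|^{p}\bar\nu(dy)<\infty$, is precisely that standard argument written out. One small bookkeeping caution: for $\bar X$ the compensator coefficient $\int_{\mathbb{R}}y\,\bar\nu(dy)$ alone need not be bounded over $\mathcal{A}$ (its small-jump part can explode); it is only the combined drift $\bar\mu+\int_{|y|>1}y\,\bar\nu(dy)$, after the cancellation with the $-\int_{|y|\le1}y\,\bar\nu(dy)$ term inside $\bar b$, that is uniformly controlled, so the Gronwall constant should be expressed through that combination.
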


The proof of the this lemma is a standard generalization of the proof for
continuous sde's if one uses Kunita's second inequality (see Corollary
4.4.24 in Applebaum \cite{Ap09}). %\begin{proof}

\begin{lemma}
\label{L_Derivatives_Lp_Finiteness}Let $p\geq2$ and for an integer $n\geq1$
assume 
\begin{equation*}
\int_{\mathbb{R}}\left\vert y\right\vert ^{np}\nu\left( dy\right) <\infty,
\end{equation*}
$h,b,\sigma\in C_b^{n}\left( \mathbb{R}\right) $. Then %\begin{align*}
%\left\vert h\left(  x\right)  \right\vert +\left\vert b\left(  x\right)
%\right\vert +\left\vert \sigma\left(  x\right)  \right\vert  &  \leq K\left(
%1+\left\vert x\right\vert \right)  ,\\
%|h^{k)}\left(  x\right)  |+|b^{k)}\left(  x\right)  |+|\sigma^{k)}\left(
%x\right)  |  &  \leq K
%\end{align*}
%for some $K<\infty$ and all $k\leq1\leq n.$ Then%
for any multi-index $\alpha$ with $0<|\alpha|\le n$ we have 
\begin{equation*}
\mathbb{E}\left [\sup_{t\in [0,1]}\left\vert \frac{\partial^{\alpha}}{%
\partial x^{\alpha}}X_{1}\left( t,x\right) \right\vert ^{p}\right ]<\infty .
\end{equation*}
%for all $k$ with $1\leq k\leq n.$
\end{lemma}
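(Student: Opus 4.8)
The plan is to argue by induction on the order $|\alpha|$, using the variational SDEs satisfied by the spatial derivatives of the flow together with moment inequalities for jump SDEs, exactly as in the proof of Lemma \ref{L_Lp_Uniform_Bounds}. Since $h,b,\sigma\in C_b^n$ with $n\ge 1$, the flow $x\mapsto X_s(t,x)$ is $n$ times continuously differentiable and its derivatives solve linear SDEs obtained by differentiating \eqref{Equ_X} (standard stochastic flow theory, see \cite{Ap09}). For the first derivative $J_s:=\partial_x X_s(t,x)$ one gets
\[
J_s = 1 + \int_t^s b'(X_r)J_r\,dr + \int_t^s \sigma'(X_r)J_r\,dB_r + \int_t^s h'(X_{r-})J_{r-}\,dZ_r,
\]
a linear equation with bounded coefficients. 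Applying the Burkholder--Davis--Gundy inequality to the Brownian integral and Kunita's second inequality (Corollary 4.4.24 in \cite{Ap09}) to the jump integral, followed by Gronwall's lemma, gives $\sup_{t\in[0,1]}\mathbb{E}[\sup_{s\in[t,1]}|J_s|^{q}]<\infty$ for every $q\ge 2$ with $\int_{\mathbb{R}}|y|^{q}\nu(dy)<\infty$; here the jump integral contributes, through Kunita's inequality, precisely the quantity $\int_{\mathbb{R}}|y|^{q}\nu(dy)$.

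For the inductive step I would prove the stronger statement that for every $1\le k\le n$ and every $q\ge 2$ with $\int_{\mathbb{R}}|y|^{kq}\nu(dy)<\infty$, all derivative processes of order $k$ have finite $q$-th moment, uniformly in $t\in[0,1]$. Differentiating the flow $k$ times and using the Fa\`{a} di Bruno formula, the order-$k$ derivative process satisfies a linear SDE whose homogeneous part is again driven by $b',\sigma',h'$ (and is absorbed by Gronwall), while the inhomogeneous source is a finite sum of products $\partial^{\beta_1}X\cdots\partial^{\beta_r}X$ of strictly lower-order derivatives, with $m:=|\beta_1|+\cdots+|\beta_r|\le k$, multiplied by bounded derivatives of $b,\sigma,h$. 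The crucial estimate is the $q$-th moment of these products: by the generalized H\"older inequality with exponents $m/|\beta_i|$ one has
\[
\Big\|\prod_{i} \partial^{\beta_i}X\Big\|_{q}\le \prod_{i} \big\|\partial^{\beta_i}X\big\|_{qm/|\beta_i|},
\]
and each factor on the right is finite by the induction hypothesis, because it requires exactly $\int_{\mathbb{R}}|y|^{qm}\nu(dy)\le\int_{\mathbb{R}}|y|^{qk}\nu(dy)<\infty$. Inserting this bound into Kunita's and the Burkholder--Davis--Gundy inequalities for the order-$k$ equation and applying Gronwall closes the induction. Taking $q=p$ and $k=|\alpha|\le n$ yields the fixed-$t$ finiteness under the stated hypothesis $\int_{\mathbb{R}}|y|^{np}\nu(dy)<\infty$.

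It remains to move the supremum over the initial time $t$ inside the expectation. Here I would first observe that $t\mapsto X_1(t,x)$ is \emph{not} continuous: since $Z$ has infinite activity, this map inherits infinitely many jumps, located at the jump times of $Z$, so a Kolmogorov continuity argument in $t$ is unavailable. Instead, reversing time, the family $\{\partial^\alpha X_1(t,x)\}_{t\in[0,1]}$ can be represented as the solution of a linear jump SDE run in the reversed filtration, with the same structure as above. As Kunita's second inequality is itself a maximal inequality, applying it in this reversed-time formulation bounds $\mathbb{E}[\sup_{t\in[0,1]}|\partial^\alpha X_1(t,x)|^p]$ by the same quantities as the fixed-$t$ estimate, hence by an expression finite under $\int_{\mathbb{R}}|y|^{np}\nu(dy)<\infty$. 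Alternatively one may invoke the regularity of the two-parameter stochastic flow together with its derivative moment bounds.

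I expect the main obstacle to be twofold. First, the H\"older bookkeeping in the inductive step: it is precisely the estimation of the products of lower-order derivatives that forces the power $np$ of the L\'evy moment, and one must verify that no source term demands a higher moment. Second, and more delicate, is the supremum over the starting time $t$: because $t\mapsto X_1(t,x)$ carries the infinitely many discontinuities of $Z$, one cannot rely on pathwise continuity in $t$, and must instead set up the correct reversed-time linear SDE for the derivative processes and justify the application of a maximal inequality to it.
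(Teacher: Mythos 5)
Your fixed-initial-time analysis is correct, and it is the standard argument: the variational (linear) SDEs for the derivative processes, the combination of Burkholder--Davis--Gundy with Kunita's second inequality (Corollary 4.4.24 of \cite{Ap09}) and Gronwall, and the Fa\`a di Bruno/H\"older bookkeeping that traces exactly the requirement $\int_{\mathbb{R}}|y|^{np}\nu(dy)<\infty$ are all sound. However, note that this delivers $\sup_{t\in[0,1]}\mathbb{E}\bigl[\sup_{s\in[t,1]}|\partial^{\alpha}_{x}X_{s}(t,x)|^{p}\bigr]<\infty$, i.e.\ the supremum over the initial time \emph{outside} the expectation, while the lemma asserts it \emph{inside}. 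The paper itself does none of this work: its entire proof is a citation of Theorem 70, Ch.~V of \cite{Pr05}, i.e.\ of the general flow theory for semimartingale-driven SDEs, which packages precisely the two-parameter regularity and moment bounds that your last step tries to reconstruct by hand.

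That last step is where your proposal has a genuine gap. You correctly identify the difficulty (the map $t\mapsto X_{1}(t,x)$ is discontinuous at every jump time of $Z$, so no Kolmogorov argument in $t$ is possible), but the proposed fix is wrong: the family $\{X_{1}(t,x)\}_{t\in[0,1]}$ does \emph{not} solve a linear jump SDE ``of the same structure'' in the reversed filtration. By the flow property, decreasing the initial time composes at the \emph{spatial} argument: writing $\phi_{t,1}(x)=X_{1}(t,x)$, at a jump time $s$ one has $\phi_{s-,1}(x)=\phi_{s,1}\bigl(x+h(x)\Delta Z_{s}\bigr)$, and between jumps the backward-in-$t$ dynamics (Kunita's backward formula) involve $\partial_{x}\phi_{t,1}$ and $\partial^{2}_{x}\phi_{t,1}$. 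Consequently, the backward evolution of $\partial^{\alpha}\phi_{t,1}(x)$ is a transport-type relation involving derivatives of order $|\alpha|+1$ and $|\alpha|+2$ and evaluations of lower-order derivatives at randomly shifted spatial points; it is not a stochastic integral of an adapted integrand against a compensated Poisson random measure, so Kunita's maximal inequality cannot be applied to it, and for the top order $|\alpha|=n$ the system does not even close under $h,b,\sigma\in C_{b}^{n}$ (one would need a spatial modulus of continuity of $\partial^{n}\phi_{s,1}$, i.e.\ morally an $(n+1)$-st derivative). Your fallback sentence, ``invoke the regularity of the two-parameter stochastic flow together with its derivative moment bounds,'' assumes exactly what is to be proved. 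A correct completion must either develop that two-parameter flow theory (Kolmogorov-type estimates in space for the derivative flows combined with the cocycle property, \`a la Kunita), or simply cite it, as the paper does via Protter's theorem.
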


\begin{proof}
Follows from Theorem 70, Ch. V in \cite{Pr05}.
\end{proof}

Using the time invariance of L\'evy processes one obtains the following
result.

\begin{lemma}
\label{L_Law_of_X} 1. For $0\leq t\leq s\leq1,X_{s}\left( t,x\right) $ and $%
X_{s-t}\left( 0,x\right) $ have the same law.

2. For $0\leq t\leq s\leq1,\bar Y_{s}(t,x)$ and $\bar Y_{s-t}(0,x)$ have the
same law, where $\bar Y_{s}(t,x)$ is the process defined in $\left( \ref%
{Equ_Y}\right) $.
\end{lemma}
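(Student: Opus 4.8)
The plan is to exploit the fact that the coefficients $b,\sigma,h$ are time-independent while the driving processes $B$ and $Z$ have stationary and independent increments; the assertion is then just the time-homogeneity of the flow. I would fix $t\in[0,1]$ and introduce the shifted driving processes $B_r':=B_{t+r}-B_t$ and $Z_r':=Z_{t+r}-Z_t$ for $r\in[0,1-t]$, together with the shifted Poisson random measure $N'(dy,dr):=N(dy,(t+dr))$. By the stationarity and independence of increments of a L\'evy process (and of Brownian motion), the pair $(B',Z')$ has the same law, as a process on $[0,1-t]$, as $(B,Z)$; equivalently, $N'$ is again a Poisson random measure with intensity $\nu(dy)\times dr$, independent of $B'$.

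Next I would perform the change of variable $u=t+r$ in the stochastic integrals defining the flow $X_s(t,x)$ of \eqref{Equ_X}. Writing $\tilde X_r:=X_{t+r}(t,x)$, the flow equation becomes
\begin{equation*}
\tilde X_r = x + \int_0^r b(\tilde X_v)\,dv + \int_0^r \sigma(\tilde X_v)\,dB_v' + \int_0^r h(\tilde X_{v-})\,dZ_v',
\end{equation*}
so that $\tilde X$ solves exactly the same SDE as $X_\cdot(0,x)$, but driven by $(B',Z')$ in place of $(B,Z)$. Since $b,\sigma,h\in C_b^1$ are globally Lipschitz, strong existence and pathwise uniqueness hold, and the solution is a measurable functional of the driving pair $(B,Z)$ (the usual Yamada--Watanabe/Picard-iteration representation). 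Equality in law of the drivers therefore transfers to equality in law of the solutions: $\tilde X_\cdot \stackrel{d}{=} X_\cdot(0,x)$ as processes on $[0,1-t]$. Evaluating at $r=s-t$ gives $X_s(t,x)=\tilde X_{s-t}\stackrel{d}{=}X_{s-t}(0,x)$, which is part 1.

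For part 2 the argument is identical and in fact simpler, since $\bar Y$ is driven only by the Brownian motions $B$ and $W$; one shifts both via $B_r'=B_{t+r}-B_t$ and $W_r'=W_{t+r}-W_t$, uses the stationarity of Brownian increments, and concludes by the same uniqueness/measurability argument applied to the continuous SDE \eqref{Equ_Y}. The only point requiring care is this transfer step: one must justify that the law of the solution depends only on the law of the driving noise, i.e.\ invoke pathwise uniqueness to represent the solution as a fixed measurable map applied to $(B,Z)$ (resp.\ $(B,W)$), so that equality in law of the noise propagates to the solution. Everything else is a routine time-shift and change of variables.
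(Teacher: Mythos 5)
Your proof is correct and is exactly the argument the paper has in mind: the paper states this lemma without any proof, simply prefacing it with the remark that it follows ``using the time invariance of L\'evy processes,'' and your time-shift of the drivers combined with pathwise uniqueness (to transfer equality in law of the noise to equality in law of the solution) is the standard way of making that one-line remark rigorous.
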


\begin{lemma}
\label{L_u(t,x)}Let $u\left( t,x\right) =\mathbb{E}[f\left( X_{1}\left(
t,x\right) \right) ]$.

\begin{description}
\item[(i)] Assume $(\mathcal{H}_{n})$ and $f\in C_{b}^{n}$ and bounded, with 
$n\geq 2,.$ Then $u\in C^{1,n}\left( [0,1]\times \mathbb{R}\right) ,$ $\frac{%
\partial ^{\alpha }u}{\partial x^{\alpha }}$ are uniformly bounded for $%
|\alpha |\leq n$ and $u$ is a solution of the equation 
\begin{subequations}
\begin{align}
& \frac{\partial u}{\partial t}\left( t,x\right) +b_{i}\left( x\right) \frac{%
\partial u}{\partial x_{i}}\left( t,x\right) +\frac{1}{2}\sigma _{ik}\sigma
_{jk}\left( x\right) \frac{\partial ^{2}u}{\partial x_{i}\partial x_{j}}%
\left( t,x\right)  \notag \\
& +\int_{\left\vert y\right\vert \leq 1}\{u\left( t,x+h\left( x\right)
y\right) -u\left( t,x\right) -\frac{\partial u}{\partial x_{i}}\left(
t,x\right) h_{i}\left( x\right) y\}\nu \left( dy\right)  \notag \\
& +\int_{\left\vert y\right\vert >1}\{u\left( t,x+h\left( x\right) y\right)
-u\left( t,x\right) \}\nu \left( dy\right) =0  \label{Equu(t,x)} \\
u\left( 1,x\right) & =f\left( x\right)  \notag
\end{align}

\item[(ii)] Assume $(\mathcal{H}_{n}^{^{\prime}})$ and $f\in C_{p}^{n},$
with $n\geq2.$ Then $u\in C^{1,n}\left( [0,1]\times\mathbb{R}\right) ,u$ is
a solution of equation $\left( \ref{Equu(t,x)}\right) $ and there exists $%
C<\infty$ and $p>0$ with 
\end{subequations}
\begin{equation*}
\left\vert \frac{\partial^{\alpha}u}{\partial x^{\alpha}}\left( t,x\right)
\right\vert \leq C\left\Vert f\right\Vert _{C_{p}^{k}}\left( 1+\left\vert
x\right\vert ^{p}\right)
\end{equation*}
for all $t\in\lbrack0,1],x\in\mathbb{R}$ and $|\alpha |\leq n.$
\end{description}
\end{lemma}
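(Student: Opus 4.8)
The plan is to exploit the time-homogeneity of the underlying flow, the differentiability of solutions of SDEs with respect to their initial datum, and the identification of \eqref{Equu(t,x)} as the backward Kolmogorov equation of $X$. First I would reduce to the time-homogeneous setting: by Lemma \ref{L_Law_of_X}, $X_1(t,x)$ has the same law as $X_{1-t}(0,x)$, so that $u(t,x)=\mathbb{E}[f(X_{1-t}(0,x))]$. Writing $P_s f(x):=\mathbb{E}[f(X_s(0,x))]$ for the transition operator, we have $u(t,\cdot)=P_{1-t}f$, which transfers all regularity and PIDE statements about $u$ to the family $s\mapsto P_s f$ and will, at the end, convert the forward generator equation for $P_s f$ into the backward equation \eqref{Equu(t,x)} for $u$ via $\partial_t u=-\partial_s(P_s f)|_{s=1-t}$.

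Next I would establish spatial smoothness together with the stated bounds. Since $b,\sigma,h\in C_b^n$, the flow $x\mapsto X_s(0,x)$ is $n$ times differentiable, and Lemma \ref{L_Derivatives_Lp_Finiteness} supplies uniform $L^p$ bounds on every spatial derivative $\partial_x^\alpha X_1(t,x)$ with $0<|\alpha|\le n$. These bounds, via dominated convergence, justify differentiating under the expectation, so that $\partial_x^\alpha u$ is obtained by the Fa\`{a} di Bruno formula as an expectation of products of derivatives of $f$ along the flow and derivatives of the flow itself. In case (i), boundedness of $f$ and of $\partial^\beta f$, $|\beta|\le n$, combined with the $L^p$ bounds, yields the asserted uniform boundedness of $\partial_x^\alpha u$. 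In case (ii), $f\in C_p^n$ grows polynomially, and I would combine the moment bounds of Lemma \ref{L_Lp_Uniform_Bounds} with H\"older's inequality to obtain $|\partial_x^\alpha u(t,x)|\le C\|f\|_{C_p^k}(1+|x|^p)$, the required moments being available under $(\mathcal{H}_n')$.

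For the equation itself, I would use the flow (Markov) property to write, for small $\delta>0$, $u(t,x)=\mathbb{E}[u(t+\delta,X_{t+\delta}(t,x))]$, and then apply the It\^o formula for the L\'evy-driven SDE \eqref{Equ_X} to $s\mapsto u(s,X_s(t,x))$ on $[t,t+\delta]$. Taking expectations, the Brownian and compensated-Poisson martingale terms vanish (their integrands are square integrable by the spatial bounds just established and the moment hypotheses on $\nu$), leaving $0=\mathbb{E}\int_t^{t+\delta}(\partial_s u+\mathcal{G}u)(s,X_s(t,x))\,ds$, where $\mathcal{G}$ denotes the integro-differential operator on the left-hand side of \eqref{Equu(t,x)}. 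Dividing by $\delta$ and letting $\delta\downarrow 0$ produces $\partial_t u+\mathcal{G}u=0$ pointwise; continuity in $t$ of the right-hand side (a consequence of the spatial smoothness) then upgrades this to $u\in C^{1,n}([0,1]\times\mathbb{R})$, while the terminal condition $u(1,\cdot)=f$ is immediate from the definition.

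The main obstacle will be the nonlocal part of $\mathcal{G}$, namely the $\nu$-integrability of the two jump integrands. Near zero one must control $u(x+h(x)y)-u(x)-\partial_i u(x)h_i(x)y$ by a second-order Taylor bound of the form $C|y|^2\sup|\partial^2 u|$ and invoke $\int_{|y|\le1}|y|^2\nu(dy)<\infty$; away from zero one must integrate $u(x+h(x)y)-u(x)$ against $\nu$ on $\{|y|>1\}$, which in case (i) is immediate because $u$ is bounded and $\nu(\{|y|>1\})<\infty$, but in case (ii) forces one to match the polynomial growth of $u$ against the moments of $\nu$ guaranteed by $(\mathcal{H}_n')$. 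This integrability bookkeeping, rather than any deep analytic difficulty, is the delicate point, and it is precisely what motivates the form of the moment assumptions.
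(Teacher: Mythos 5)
Your proposal is correct in substance and follows essentially the same route as the paper's proof: spatial derivatives of $u$ are obtained by differentiating under the expectation (justified by the $L^p$ bounds of Lemma \ref{L_Derivatives_Lp_Finiteness}), the uniform boundedness in case (i) and the polynomial growth in case (ii) follow from Lemmas \ref{L_Lp_Uniform_Bounds} and \ref{L_Derivatives_Lp_Finiteness}, and the time derivative and the PIDE come from time-homogeneity (Lemma \ref{L_Law_of_X}) together with It\^o's formula. The one point to repair is a circularity in your derivation of the equation: you propose to apply It\^o's formula to $s\mapsto u(s,X_s(t,x))$, which requires $u\in C^{1,2}$, and in particular the time-differentiability that is part of what you are proving. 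The paper avoids this by applying It\^o's formula to $f(X_1(t,x))$, i.e., to the terminal function $f$ (which is $C^2$ by hypothesis, $n\geq 2$) along the flow, after writing $u(t,x)=\mathbb{E}[f(X_{1-t}(0,x))]$ via Lemma \ref{L_Law_of_X}. Your own identity $u(t,x)=\mathbb{E}[u(t+\delta,X_{t+\delta}(t,x))]$ gives an equally clean fix: freeze the time argument and apply It\^o's formula to the spatial map $x\mapsto u(t+\delta,x)$, which is $C^2$ in $x$ with the bounds already established in your second step, along the flow on $[t,t+\delta]$; dividing by $\delta$ and letting $\delta\downarrow 0$ then yields $\partial_t u+\mathcal{G}u=0$ as a genuine one-sided derivative, upgraded to $C^1$ in time by the continuity of $(t,x)\mapsto \mathcal{G}u(t,x)$, rather than presupposing it. Everything else in your plan --- the Fa\`a di Bruno bookkeeping for higher-order derivatives, the dominated-convergence justification for differentiating under the expectation, and the integrability of the nonlocal term (second-order Taylor bound near zero against $\int_{|y|\leq 1}|y|^2\nu(dy)<\infty$, and moments of $\nu$ against the polynomial growth of $u$ in case (ii)) --- matches the paper's intent, which compresses exactly these steps into a few lines.
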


\begin{proof}
The derivative $\frac{\partial u}{\partial x}$ satisfies%
\begin{equation*}
\frac{\partial u}{\partial x_i}\left( t,x\right) =\mathbb{E}\left [\frac {%
\partial f}{\partial x_j}(X_{1}\left( t,x\right) )\frac{\partial}{\partial
x_i}X_{1}^j\left( t,x\right) \right ].
\end{equation*}
The interchange of the derivative and the expectations is justified using
Lemma \ref{L_Derivatives_Lp_Finiteness}. Furthermore, one obtains by a
direct estimation the boundedness under $(\mathcal{H}_{n})$ or the
polynomial growth under $(\mathcal{H}_{n}^{^{\prime}})$ using lemmas \ref%
{L_Lp_Uniform_Bounds} and \ref{L_Derivatives_Lp_Finiteness}. The other
derivatives with respect to $x$ are obtained by successive differentiation
under the expectation and the derivative with respect to $t$ is obtained
from It\^{o}'s formula applied to $f(X_{1}\left( t,x\right) )$ using Lemma %
\ref{L_Law_of_X}.
\end{proof}


\begin{thebibliography}{99}
\bibitem{Ap09} D. Applebaum. \textit{L\'{e}vy processes and stochastic
calculus}. Cambridge University Press (2009).

\bibitem{AR01} S. Asmussen and J. Rosinski. \textit{Approximation of small
jumps of L\'{e}vy processes with a view towards simulation.} J. Appl.
Probab. \textbf{38}, 482--493 (2001).

\bibitem{bgt.87} N. H. Bingham, C. M. Goldie and J. L. Teugels, \textit{%
Regular Variation}, Cambridge University Press (1987).

\bibitem{B-LP07} N. Bruti-Liberati and E. Platen. \textit{Strong
approximations of stochastic differential equations with jumps}. J. Comput.
Appl. Math. \textbf{205}(2), 982--1001 (2007).

\bibitem{dereich.11} S. Dereich. \textit{Multilevel {M}onte {C}arlo
    algorithms for {L}\'evy-driven {SDE}s with {G}aussian
    correction}. Ann. App. Proba. \textbf{21} (1), 283--331 (2011).


\bibitem{Doob} J. Doob. \textit{Measure Theory}, Graduate Texts in
Mathematics, Springer-Verlag (1994).

\bibitem{gyurko.lyons.10} L.G. Gyurk{\'o} and T.J. Lyons. \newblock \textit{%
Efficient and practical implementations of cubature on Wiener space.} %
\newblock {Stochastic Analysis 2010}, 73--111, 2011.

\bibitem{kloeden} P.~Kloeden and E.~Platen. 
\newblock {\em Numerical
Solution of Stochastic Differential Equations}. \newblock Springer, Berlin,
1992.

\bibitem{KH-T09} A. Kohatsu-Higa and P. Tankov. \textit{Jump-adapted
discretization schemes for L\'{e}vy-driven SDEs}. Stochastic Process. Appl. 
\textbf{120}(11), 2258--2285 (2010).

\bibitem{JKPM05} J. Jacod, T. Kurtz, P. Protter and S. M\'{e}l\'{e}ard. 
\textit{The Approximate Euler method for L\'{e}vy driven stochastic
differential equations}. Ann. Inst. H. Poincar\'{e}. \textbf{41}(3),
523--558 (2005).

\bibitem{K-N77} M. G. Krein and A. A. Nudelman. \textit{The Markov moment
problem and extremal problems}. American Mathematical Society (1977).

\bibitem{LiVi04} T. Lyons and N. Victoir. Cubature on Wiener space.
Stochastic analysis with applications to mathematical finance. Proc. R. Soc.
Lond. Ser. A Math. Phys. Eng. Sci. 460, 169-198 (2004)

\bibitem{MSTZ08} E. Mordecki, A. Szepessy, R. Tempone and G.E. Zouraris. 
\textit{Adaptive weak approximations of diffusions with jumps}. SIAM J.
Numer. Anal. \textbf{46}(4), 1732--1768 (2008).

\bibitem{NV} S. Ninomiya and N. Victoir. \textit{Weak approximation of
stochastic differential equations and application to derivative pricing},
Appl. Math. Finance \textbf{15} (1--2), 107--121 (2008).

\bibitem{OTV} K. Oshima, J. Teichmann and D.Veluscek. \textit{A new
extrapolation method for weak approximation schemes with applications},
arXiv/0911.4380, preprint (2010).

\bibitem{poirot.tankov.06} J\'er\'emy Poirot and Peter Tankov. \newblock 
\textit{{M}onte {C}arlo option pricing for tempered stable ({CGMY})
processes.} \newblock {Asia-Pacific Financial Markets}, \textbf{13},
327--344, 2006.

\bibitem{Pr05} P. Protter. \textit{Stochastic integration and differential
equations: A new approach}. Springer (2005).

\bibitem{PT97} P. Protter and D. Talay. \textit{The Euler scheme for L\'{e}%
vy driven stochastic differential equations}. Ann. Probab \textbf{25}(1),
393--423 (1997).

\bibitem{Ru03} S. Rubenthaler. \textit{Numerical simulation of the solution
of a stochastic differential equation driven by a L\'{e}vy process}.
Stochastic Process. Appl. \textbf{103}(2), 311--349 (2003).

\bibitem{TaK-H09} H. Tanaka and A. Kohatsu-Higa. \textit{An Operator
Approach for Markov Chain Weak Approximations with an Application to
Infinite Activity L\'{e}vy Driven SDEs}. Ann. of Appl. Probab. \textbf{19}%
(3), 1026--1062 (2009).

\bibitem{Ta10} P. Tankov. \textit{High order weak approximation schemes for L%
\'{e}vy-driven SDEs}, in \emph{Proceedings of the 9$^{\text{th}}$
International Conference on Monte Carlo and Quasi-Monte Carlo Methods in
Scientific Computing}, Springer, 2011 (to appear).
\end{thebibliography}
\end{document}